\documentclass{article}
\usepackage{graphicx}

\usepackage{comment}
\usepackage{amsmath,amsthm,amsfonts,amssymb,amscd}
\usepackage{authblk}
\usepackage{geometry}
\usepackage{mathtools}
\usepackage{float}
\usepackage{graphicx}
\usepackage{enumitem}
\usepackage{tikz}
\usepackage[square,sort,comma,numbers]{natbib}
\usepackage{subcaption}
\usepackage{hyperref}
\usepackage{thmtools} 
\usepackage{thm-restate}

\usepackage{mathrsfs}
\hypersetup{
     colorlinks=true,
     linkcolor=blue,
     filecolor=blue,
     citecolor = black,      
     urlcolor=cyan,
     }
\usepackage[T1]{fontenc}
\newtheorem{remark}{Remark}[section]
\newtheorem{theorem}{Theorem}[section]
\newtheorem{proposition}{Proposition}[section]
\newtheorem{corollary}{Corollary}[theorem]
\newtheorem{lemma}{Lemma}[section]

\theoremstyle{definition}

\newtheorem{example}{Example}[section]
\newtheorem{assumption}{Assumption}[section]

\newcommand{\R}{\mathbb{R}}

\newcommand{\Z}{\mathbb{Z}}

\newcommand{\E}{\mathbb{E}}

\newcommand{\vc}{\vcentcolon}
\newcommand{\eps}{\varepsilon}
\newcommand{\kk}{k}
\newcommand{\ind}{\mathbf{1}}
\newcommand{\ff}{\mathsf{f}^{\eps}}
\DeclareMathOperator*{\argmin}{arg\,min}
\newcommand{\vv}{\mathsf{v}}
\newcommand{\ww}{\mathsf{w}}
\DeclareMathOperator{\Ima}{Im}

\def\X{{\mathcal X}}

\def\M{{\mathcal M}}
\newcommand{\grad}{\mathrm{grad}}

\usepackage{amsmath}

\usepackage[capitalise]{cleveref}
\usepackage{todonotes}
\newcommand{\dive}{\mathrm{div}}

\title{Convergence of the Sinkhorn Riemannian metric for finitely supported measures}
\author{Gilles Mordant\thanks{Department of Applied and Computational Mathematics, Yale University} \ and Liane Xu\thanks{Program in Applied and Computational Mathematics, Princeton University}}
\date{\vspace{-7.5ex}}
\begin{document}
\maketitle
\begin{abstract}
  The Sinkhorn Riemannian metric characterizes the local behavior of the Sinkhorn divergence, a debiased version of entropy-regularized optimal transport. Although the Sinkhorn divergence is known to approximate the squared Wasserstein-2 distance as the regularization parameter $\eps$ goes to $0$, the behavior of the Sinkhorn Riemannian metric as $\eps \to 0$ remains largely open. To the best of our knowledge, the only existing argument is for measures with a density and is still a formal computation. In this work, we prove the convergence of the Sinkhorn Riemannian metric as $\eps \to 0$ in the case of finitely supported measures undergoing horizontal perturbations. Under our assumptions, our techniques additionally allow us to prove the convergence of higher order derivatives of the Sinkhorn Riemannian metric, thereby ensuring the convergence of the associated Riemann curvature tensor and Christoffel symbols as $\eps \to 0$. We also compare with the absolutely continuous setting and show where our approach fails for measures with a density.
\end{abstract}

 \let\thefootnote\relax\footnotetext{\textit{Key words and phrases:} Sinkhorn divergence, entropic optimal transport, Sinkhorn Riemannian metric}


\section{Introduction}

Entropy-regularized optimal transport has become popular in machine learning and data science for approximating Wasserstein distances, thanks to its computational \cite{cuturi2013sinkhorn, peyre2019computational} and statistical benefits \cite{genevay2019sample, mena2019statistical,groppe2024lower,rigollet2025sample}. For $\X \subset \R^d$ compact, $\mu, \nu \in \mathcal{P}(\X)$ and $\eps > 0$, entropy-regularized optimal transport is given by
\begin{equation}
\label{eq:ent-reg-ot}
    \mathrm{OT}_\eps(\mu, \nu) \vc= \inf_{\pi \in \Pi(\mu, \nu)} \int \|x-y\|^2 d\pi(x, y) + \eps \mathrm{KL}(\pi | \mu \otimes \nu),
\end{equation}
where $\Pi(\mu, \nu)$ is the set of couplings between $\mu$ and $\nu$ and $\mathrm{KL}$ is the Kullback–Leibler divergence.
As $\eps \to 0$, $\mathrm{OT}_\eps(\mu, \nu)$ converges to the squared Wasserstein-2 distance
\[ W_2(\mu,\nu)^2 \vc = \inf_{\pi \in \Pi(\mu, \nu)} \int \|x-y\|^2 d\pi(x, y) \]
between $\mu$ and $\nu$ \cite{nutz2022introduction}. 
$\mathrm{OT}_\eps(\mu, \mu)$ is often strictly positive (see \eqref{eq: SinkhLB}), which is why Genevay et al. proposed using the \textit{Sinkhorn divergence}
\begin{equation}
\label{eq:def-sinkdiv}
    S_\eps(\mu, \nu) \vc = \mathrm{OT}_\eps(\mu, \nu) - \frac{\mathrm{OT}_\eps(\mu, \mu)}{2} - \frac{\mathrm{OT}_\eps(\nu, \nu)}{2}
\end{equation}
as a loss function \cite{genevay2018learning}. Feydy et al. later showed that $S_\eps \geq 0$ and $S_\eps(\mu, \nu) = 0$ if and only if $\mu = \nu$ \cite{feydy2019interpolating}.

In parallel, the formal Riemannian structure of Wasserstein-2 space \cite{otto2001geometry} has also become useful in practice, on top of having a rich mathematical theory; for example, see \cite[Chapter 6]{chewi2024statisticaloptimaltransport} for applications of Wasserstein gradient flows and \cite{ambrosio2008gradient} for a more theoretical treatment. This Riemannian structure is given by defining the tangent space at $\mu \in \mathcal{P}_2(\R^d)$ by
\begin{equation}
\label{eq:wasserstein-tangent-space}
T_\mu \mathcal{P}_2(\R^d) \vc = \overline{ \{ \nabla \varphi \colon \varphi \in C_c^\infty(\R^d) \}}^{L^2(\mu)} 
\end{equation}
and equipping $T_\mu \mathcal{P}_2(\R^d)$ with the $L^2(\mu)$ inner product. This Riemannian structure can be motivated via the Benamou-Brenier formula
\[ W_2(\mu_0, \mu_1)^2 = \inf_{(\mu_t, v_t)} \left\{ \int_0^1 \|v_t\|^2_{L^2(\mu_t)} dt \colon \partial_t \mu_t + \dive(v_t\mu_t) = 0 \right\}, \]
where the infimum is over sufficiently regular curves $t \mapsto \mu_t$ and the continuity equation is understood in the distributional sense \cite{benamou2000computational}.
The definition of the tangent space (\ref{eq:wasserstein-tangent-space}) can then be understood by observing that $v \in T_\mu \mathcal{P}_2(\R^d)$ if and only if
\begin{equation}
\label{eq:minimal-l2}
    \left\| v+ w \right\|_{L^2(\mu)} \geq \|v\|_{L^2(\mu)}
\end{equation}
for all vector fields $w \in L^2(\mu)$ such that $\dive(w\mu) = 0$ \cite[Lemma 8.4.2]{ambrosio2008gradient}.
Although the Wasserstein-2 Riemannian structure is often defined via vector fields in the tangent spaces (\ref{eq:wasserstein-tangent-space}), if $\dot{\mu} = -\dive(v \mu)$ for $v \in T_\mu \mathcal{P}_2(\R^d)$, we write
\[ g^0_\mu(\dot{\mu}, \dot{\mu}) = \|v\|_{L^2(\mu)}^2 = \|\dot\mu\|_{\dot{H}^{-1}(\mu)}^2, \]
where $\| \cdot\|_{\dot{H}^{-1}(\mu)}$ is a homogeneous negative Sobolev norm (see \cite{peyre2018comparison}).
If $\mu$ has a density and we are only given $\dot{\mu}$ where 
\begin{equation}
\label{eq:pde-for-riem-metric}
\dot{\mu} = -\dive(\mu \nabla \varphi)
\end{equation}
for some $\varphi$, with appropriate conditions, one can also obtain $g^0_\mu(\dot{\mu}, \dot{\mu})$ by first solving the PDE (\ref{eq:pde-for-riem-metric}) for $\varphi$.

These two facets of optimal transport are complementary: regularization has imposed itself as a convenient numerical tool to approximate optimal transport, while the geometric structure of Wasserstein-2 space is theoretically fundamental.
Combining these two aspects, \cite{lavenant2025riemannian} recently studied the Riemannian structure arising Sinkhorn divergences,
by considering paths $\{\mu_t\}_{t \in (-\delta, \delta)}$ satisfying appropriate regularity assumptions and taking
\begin{equation}
    g^\eps_\mu(\dot{\mu}, \dot{\mu}) \vc = \lim_{t \to 0} \frac{S_\eps(\mu, \mu_t)}{t^2},
\end{equation}
where $\dot{\mu}$ denotes the derivative of $\{\mu_t\}_{t \in (-\delta, \delta)}$ at $t = 0$.
We will refer to $g^\eps$ as the \textit{Sinkhorn Riemannian metric}. 

The Sinkhorn Riemannian metric has already appeared in several applications.
In earlier work, \cite{shen2020sinkhorn} used it in a gradient-based algorithm to optimize a functional on a parametric family of probability measures; however, the focus was on training generative models and the Riemannian structure itself was studied in less detail compared to \cite{lavenant2025riemannian}. In \cite{hardion2025gradient}, the Sinkhorn Riemannian metric appears when studying the JKO scheme with the squared Wasserstein-2 distance replaced by the Sinkhorn divergence. The Sinkhorn Riemannian metric also plays a central role in \cite{agarwal2026sinkhorn}, where they define treatment effects using the Sinkhorn divergence, and in \cite{kokot2025coreset}, where they propose an algorithm for efficiently compressing a dataset with the Sinkhorn divergence as the measure of error. Moreover, given the recent work on manifold learning in Wasserstein space \cite{hamm2025manifold, oliver2025laplace}, one can also study the problem of manifold learning in the geometry of the Sinkhorn Riemannian metric \cite{xu2026convergence}.

Since the Sinkhorn divergence approximates the squared Wasserstein-2 distance as $\eps\to 0$, a natural question is whether the Sinkhorn Riemannian metric also converges to the Wasserstein-2 Riemannian structure as $\eps \to 0$. 
An informal argument for measures with density is provided in \cite{lavenant2025riemannian}, but to the best of our knowledge, this question has not been studied for finitely supported measures.

In this paper, we prove the convergence of the Sinkhorn Riemannian metric (and its derivatives) as $\eps \to 0$ for measures supported on $N$ points which vary smoothly. Specifically, we will work under the following assumption:
\begin{restatable}[]{assumption}{assump}
\label{assump:1}
Fix $a_1, \ldots, a_N > 0$ such that $\sum_k a_k = 1$, a compact set $\X \subset \R^d$, an open set $U \subset \R^m$ and a smooth embedding $\iota \colon U \to \X^N$ such that $\iota_k(p) \neq \iota_l(p)$ for all $p \in U$ and $k \neq l$. Define for each $p  \in U$
\[ \mu_p \vc = \sum_{k = 1}^N a_k \delta_{\iota_k(p)}. \]
\end{restatable}


The set $U$ can be replaced with a more general smooth manifold $\M$ by passing to charts. By a slight abuse of notation, we treat $g^\eps$ and $g^0$ as Riemannian metrics directly on $U$, e.g., for a smooth path $\gamma \colon (-\delta, \delta) \to U$ with $\gamma(0) = p$,
\[ 
g^\eps_p(\gamma'(0), \gamma'(0)) = \lim_{t \to 0} \frac{S_\eps(\mu_{\gamma(0)}, \mu_{\gamma(t)})}{t^2}
\]
and similarly for $g^0$. Working in the standard coordinates on $U \subset \R^m$, we show that
\[ \lim_{\eps \to 0} g_{ij}^\eps(p) = g_{ij}^0(p) \]
for each $p \in U$ and $i, j \in [N]$; additionally, this convergence can be made uniform over compact subsets of $U$.

Under Assumption \ref{assump:1}, each $g^\eps$ and $g^0$ turn out to be smooth Riemannian metrics on $U$, so another natural question is whether the derivatives of $g^\eps_{ij}$ converge as $\eps \to 0$ at each $p \in U$. 
We answer this in the affirmative as well. Again, the convergence can be made uniform over compact subsets of $U$. The convergence of the derivatives of $g^\eps$ at $p \in U$ as $\eps \to 0$ then implies the convergence of the associated Christoffel symbols and Riemann curvature tensor at $p$ as $\eps \to 0$.

The rest of the paper is organized as follows. In \cref{sec:background}, we first introduce some background about Sinkhorn divergences and operators on graphs, along with the notation that we will be working with. We then prove the convergence of $g^\eps$ as $\eps \to 0$ in \cref{sec:conv}, and the convergence of its derivatives in \cref{sec:derivs}.

\section{Preliminaries}
\label{sec:background}

\subsection{Sinkhorn divergences}
\label{subsec:sinkhorn}
Here we give a brief overview of entropy-regularized optimal transport and Sinkhorn divergences; see \cite{peyre2019computational, nutz2022introduction, chewi2024statisticaloptimaltransport} for more details. Let $\X \subset \R^d$ be compact, $\mu, \nu \in \mathcal{P}(\X)$ and $\eps > 0$. Recall from the introduction that
\[
    \mathrm{OT}_\eps(\mu, \nu) \vc= \inf_{\pi \in \Pi(\mu, \nu)} \int \|x-y\|^2 d\pi(x, y) + \eps \mathrm{KL}(\pi | \mu \otimes \nu),
\]
where $\Pi(\mu, \nu)$ denotes the set of couplings between $\mu$ and $\nu$ and $\mathrm{KL}$ denotes the Kullback–Leibler divergence.
For us, the dual formulation
\begin{equation}
    \label{eq:ent-reg-ot-dual}
    \mathrm{OT}_\eps(\mu, \nu) = \sup_{f, g \in C(\X)} \int f d\mu + \int g d\nu - \eps \int \left( \exp\left( \frac{f(x) + g(y) - \|x-y\|^2}{\eps}\right)-1 \right)d(\mu \otimes \nu)(x, y)
\end{equation} 
will be important.
In particular, there exist maximizers $f_{\mu, \nu}^\eps, g_{\mu, \nu}^\eps \in C(\X)$ for (\ref{eq:ent-reg-ot-dual}) which satisfy 
\begin{align}
    \label{eq:schrodinger-1}
    \begin{split}
    \int \exp \left(\frac{f_{\mu, \nu}^\eps(x) + g_{\mu, \nu}^\eps(y) - \|x-y\|^2}{\eps} \right) d\nu(y) &= 1 \\
    \int \exp \left(\frac{f_{\mu, \nu}^\eps(x) + g_{\mu, \nu}^\eps(y) - \|x-y\|^2}{\eps} \right) d\mu(x) &= 1, 
    \end{split}
\end{align}
or equivalently,
\begin{align}
\label{eq:schrodinger-1-1}
\begin{split}
f_{\mu, \nu}^\eps(x) &= -\eps \log \left( \int \exp \left( \frac{g_{\mu, \nu}^\eps(y) - \|x-y\|^2}{\eps} \right) d\nu(y) \right) \\
g_{\mu, \nu}^\eps(y) &= -\eps \log \left( \int \exp \left( \frac{f_{\mu, \nu}^\eps(x) - \|x-y\|^2}{\eps} \right) d\mu(x) \right) 
\end{split}
\end{align}
for all $x, y \in \X$. Such $f_{\mu, \nu}^\eps, g_{\mu, \nu}^\eps$ are known as \textit{Schr\"odinger potentials} and are unique up to an additive constant. 
Moreover, the coupling
\begin{equation}
\label{eq:optimal-coupling-form}
    d\pi_{\mu, \nu}^\eps(x, y) \vc= \exp \left( \frac{f_{\mu, \nu}^\eps(x) + g_{\mu,\nu}^\eps(y) - \|x-y\|^2}{\eps} \right)d(\mu \otimes \nu)(x, y)
\end{equation}
is optimal for the primal (\ref{eq:ent-reg-ot}).

In the case where $\mu = \nu$, we can choose $f_{\mu, \mu}^\eps, g_{\mu, \mu}^\eps$ so that $f_{\mu, \mu}^\eps = g_{\mu, \mu}^\eps.$ This is the representative that we will use throughout this paper. Under this convention, the \textit{self-transport kernel} at $\mu$ is given by
\begin{equation}
    \label{eq:self-transport-kernel}
    \kk_\mu^\eps(x, y) \vc = \exp \left( \frac{f_{\mu, \mu}^\eps(x) + f_{\mu,\mu}^\eps(y) - \|x-y\|^2}{\eps} \right).
\end{equation}
Since the Gaussian kernel is a positive definite kernel, $\kk_\mu^\eps$ is as well. Therefore, by the Moore-Aronzajn theorem \cite{aronszajn1950theory}, there exists a unique reproducing kernel Hilbert space (RKHS) with kernel $\kk_\mu^\eps$, which we denote by $\mathcal{H}_\mu^\eps$. 
The dual formulation also gives us the lower bound
\begin{equation}
\label{eq: SinkhLB}
   \mathrm{OT}_\eps(\mu, \mu)\geq  \eps \left( 1 - \int \exp\left( - \frac{\|x-y\|^2}{\eps} \right) d(\mu \otimes \mu)(x,y)  \right).
\end{equation}

\subsubsection{The Sinkhorn Riemannian metric}
We now discuss the Riemannian structure induced by Sinkhorn divergences, following \cite{lavenant2025riemannian}. Fix any $k \in \Z_{\geq 0}$ and $\mu \in \mathcal{P}(\X)$. If $k \geq 1$, furthermore assume that $\X$ is the closure of a bounded open set. 
Define the operators 
\[K_\mu^\eps \colon C(\X) \to C^k(\X), \quad K_\mu^\eps[\varphi](y) \vc =  \int_{\X} \varphi(x) \kk_\mu^\eps(x, y) d\mu(x) \] 
\[H_\mu^\eps \colon (\mathcal{H}_\mu^\eps)^* \to \mathcal{H}_\mu^\eps, \quad H_\mu^\eps[\nu](y) \vc = \langle \nu, \kk_\mu^\eps(\cdot, y) \rangle. \]
The definition of the operator $K_\mu^\eps$ can also be extended to continuous vector-valued functions $\varphi \colon \X \to \R^d$ by evaluating component-wise. Of particular interest is when $\varphi(x) = x$ for $x \in \X$, which we denote by
\begin{equation}
\label{eq:entropic-map}
T^\eps_{\mu}(y) \vc = \int_{\X} x \kk_\mu^\eps(x, y) d\mu(x). 
\end{equation}
This is the \emph{entropic map} (e.g., \cite{chewi2024statisticaloptimaltransport, pooladian2021entropic}) for the self-transport problem at $\mu$.

The operator $H_\mu^\eps$ is precisely the kernel distribution embedding \cite{simon2018kernel} for $\mathcal{H}_\mu^\eps$, a generalization of the popular kernel mean embedding.
By (\ref{eq:schrodinger-1-1}), $f_{\mu, \mu}^\eps$ and $\kk_\mu^\eps$ can be extended to smooth functions on $\R^d$ and $\R^d \times \R^d$, respectively, so $\mathcal{H}_\mu^\eps$ injects continuously into $C^k(\X)$ \cite{lavenant2025riemannian, simon2018kernel}. Therefore, the operator $H_\mu^\eps$ is also well-defined on $C^k(\X)^*$.

To construct the Sinkhorn Riemannian metric, consider a path $\{\mu_t\}_{t \in (-\delta, \delta)}$ with $\mu_0 = \mu$ which is continuously differentiable in $C^k(\X)^*$ with the weak-* topology. Let $C^k(\X)/\R$ denote the Banach space obtained by quotienting out the constant functions from $C^k(\X)$, equipped with the standard quotient norm. Then
\begin{equation}
\label{eq:hess-sink}
    g^\eps_\mu(\dot{\mu}, \dot{\mu}) \vc = \lim_{t \to 0} \frac{S_\eps(\mu_0, \mu_t)}{t^2} = \frac{\eps}{2} \langle \dot{\mu}, (\mathrm{id} - (K_\mu^\eps)^2)^{-1} H_\mu^\eps [\dot{\mu}] \rangle,
\end{equation}
where $\dot{\mu}$ is the derivative of $\{\mu_t\}_{t \in (-\delta, \delta)}$ at $t = 0$ \cite{lavenant2025riemannian}. Here, $\mathrm{id} - (K_\mu^\eps)^2$ should be understood as an operator from $C^k(\X) / \R$ to itself; \cite{lavenant2025riemannian} shows that this is well-defined and has a bounded inverse. Since $\dot{\mu}$ is well-defined as an operator on the space $C^k(\X)/\R$, the right hand side of (\ref{eq:hess-sink}) is indeed well-defined.

\subsection{Operators on graphs}
\label{subsec:graph-op}
The main difficulty in controlling $g^\eps$ is understanding how the inverse of $\mathrm{id} - (K_\mu^\eps)^2$ behaves as $\eps \to 0$. 
When $\mu = \sum_{k = 1}^N a_k \delta_{x_k}$ is a discrete measure with finite support, $K_\mu^\eps$ can be expressed as left multiplication by the $N \times N$ matrix 
\[ 
    K \vc = \left[ a_l \kk_\mu^\eps(x_k, x_l)  \right]_{k, l}, 
\]
which is a Markov transition matrix by (\ref{eq:schrodinger-1}) (see also \cite[Remark 3.10]{lavenant2025riemannian}). Since $K_\mu^\eps$ has non-negative spectrum \cite{lavenant2025riemannian}, the inverse of $I + K_\mu^\eps$ can be controlled, and the difficulty lies more in (pseudo-) inverting $I - K$, which can be viewed as a graph Laplacian.

For this reason, we review the analogues of the gradient, divergence and Laplacian for a weighted graph, following \cite{lim2020hodge}.
Let $G = (V, E, W)$ be a weighted graph with $|V|$ finite and symmetric non-negative edge weights, i.e., $W_{kl} = W_{lk} \geq 0$ for all $k, l$. The \textit{degree} of vertex $k$ is given by
\[
d_k \vc = \sum_l W_{kl}.
\]
Denote by $L^2(V)$ the set of real-valued functions $f \colon V \to \R$, equipped with the inner product
\begin{equation}
\label{eq:l2v}
\langle f, g \rangle_{L^2(V)} \vc = \sum_k d_k f(k) g(k).
\end{equation}
Define the set of \textit{edge flows} on $G$ as
\[ 
L^2_{\wedge}(E) \vc = \{ X \colon V \times V \to \R \ |\ X(k, l) = -X(l, k) \ \forall k, l \}, 
\]
equipped with the inner product
\begin{equation}
    \langle X, Y \rangle_{L^2_{\wedge}(E)} \vc = \sum_{k < l} W_{kl} X(k, l) Y(k, l) = \frac{1}{2} \sum_k \sum_l W_{kl} X(k, l) Y(k, l).
\end{equation}
We will often treat $f \colon V \to \R$ and $X \colon V \times V \to \R$ as elements of $\R^{|V|}$ and $\R^{|V| \times |V|}$, respectively.
The \textit{gradient} $\grad_G \colon L^2(V) \to L^2_{\wedge}(E)$ and \textit{divergence} $\dive_G \colon L^2_{\wedge}(E) \to L^2(V)$ on $G$ are defined as
\[(\grad_G f)(k, l) \vc =  f(l) - f(k), \ \ \  (\dive_G X)(k) \vc = \frac{1}{d_k} \sum_l W_{kl} X(k, l), \]
respectively.
One can check that the divergence $\dive_G$ is the negative of the adjoint of $\grad_G$:
\begin{equation}
\label{eq:div_adjoint}
\langle \grad_G f, X \rangle_{L^2_{\wedge}(E)} = -\langle f, \dive_G X \rangle_{L^2(V)}
\end{equation}
for all $f \in L^2(V)$ and $X \in L^2_{\wedge}(E)$; this can also be viewed as an integration by parts formula on $G$.
The \textit{graph Laplacian} $L_G$ on $G$ is defined similarly to the smooth setting:
\[
    L_Gf \vc = -\dive_G(\grad_G f)
\]
for $f \in L^2(V)$. This agrees with the typical definition of the graph Laplacian (with the random walk normalization)
\begin{equation}
\label{eq:gl-rw}
L_G = I - D_G^{-1}W, 
\end{equation}
where $D_G$ is the degree matrix of $G$ (a diagonal matrix with entries $d_1, \ldots, d_{|V|}$ on the diagonal).

\subsection{Notation and assumptions}
\label{subsec:notation}

As explained in the introduction, unless otherwise specified, we work under the following assumption:
\assump*

Without loss of generality, we can take $\X$ to be the closure of a bounded open subset of $\R^d$. For $n \in \Z_{> 0}$, let $[n]:=\{1, \ldots, n\}$. From now on, we use $i, j \in [m]$ to index the coordinates on $U$ and $k, l \in [N]$ to index the particles. We treat $a_1, \ldots, a_N > 0$ and $N \in \Z_{> 0}$ as constants when using big-O notation $O(\cdot)$. If $\mathcal{F}$ is a Banach space containing the constant function $\ind_\mathcal{X}(x) \vc = 1$ on $\mathcal{X}$, we use $\mathcal{F}/\R$ to denote the Banach space from quotienting out $\mathrm{span}(\left\{\ind_\mathcal{X}\right\})$.
We use the shorthand notation $\langle \cdot, \cdot\rangle_a$ for the $a$-weighted inner product, i.e., 
\[\langle u, v\rangle_a \vc= \sum_k a_k u_k v_k.\]
Unless otherwise specified, $A^\dagger$ denotes the generalized inverse of $A \in \R^{N \times N}$ as an operator from $\R^N$ \textit{with the $a$-weighted inner product $\langle \cdot, \cdot \rangle_a$} to itself. Specifically, when $A$ is self-adjoint with respect to $\langle \cdot, \cdot \rangle_a$, $A$ has an orthonormal (with respect to $\langle \cdot, \cdot \rangle_a$) basis of eigenvectors $u^{(1)}, \ldots, u^{(N)}$ with corresponding eigenvalues $\lambda_1, \ldots, \lambda_N$, and $A^\dagger$ is given by
\begin{equation}
\label{eq:generalized-inverse}
A^\dagger u^{(k)} =  
\begin{cases}
    \frac{1}{\lambda_k} u^{(k)} & \text{ if }\lambda_k \neq 0  \\
    0 & \text{ otherwise}.
\end{cases}
\end{equation}
 

\subsubsection{Velocities}
We denote the velocities of the atoms by
\begin{align*}
    v_{k, i} \colon U &\to \R^d \\
    p &\mapsto \frac{\partial \iota_k}{\partial p_i}(p)
\end{align*}
for each $i \in [m]$ and $k \in [N]$.
Relatedly, for each $i \in [m]$, define $\dot{\mu}_i \colon U \to C^1(X)^*$ by
\[\quad \langle \dot{\mu}_i(p), \varphi \rangle \vc= \lim_{t \to 0} \bigg\langle \frac{\mu_{p + te_i} - \mu_p}{t}, \varphi \bigg\rangle = \sum_{k = 1}^N a_k \nabla \varphi (\iota_k(p)) \cdot v_{k, i}(p) \]
for $p \in U$ and $\varphi \in C^1(\X),$ where $\{e_i\}_{i \in [m]}$ denotes the standard coordinate basis of $\R^m$.

\subsubsection{Riemannian metrics}
We will often work in coordinates: for a Riemannian metric $g$ on $U$, 
\[ g_{ij}(p) \vc = g_p \left( \frac{\partial}{\partial p_i} \bigg|_p,  \frac{\partial}{\partial p_j}\bigg|_p \right) \]
for $p \in U$.
By a slight abuse of notation, we consider the Wasserstein-2 Riemannian metric $g^0$ and the Sinkhorn Riemannian metric $g^\eps$ for each $\eps > 0$ as a Riemannian metric directly on $U$. 
For $g^0$, this means
\[ g^0_{ij} = \sum_k a_k v_{k, i}^\top v_{k, j} = \langle v_{k, i}, v_{k, j}\rangle_a. \]
For a smooth path $\gamma \colon (-\delta, \delta) \to U$ with $\gamma(0) = p$, it can be checked that $\{\mu_{\gamma(t)}\}_{t \in (-\delta, \delta)}$ is continuously differentiable in $C^1(\X)^*$ with the weak-* topology, so
\[ 
g^\eps_p(\gamma'(0), \gamma'(0)) = \lim_{t \to 0} \frac{S_\eps(\mu_{\gamma(t)}, \mu_{\gamma(0)})}{t^2}. 
\]
Alternatively, for each $\eps > 0$, the function $F_\eps(p, q) \vc = S_\eps(\mu_p, \mu_q)$ is smooth on $U \times U$ by an implicit function theorem argument (see \cite{xu2026convergence} for a proof specifically for Assumption \ref{assump:1}, though similar arguments have appeared previously in the literature, e.g., \cite{lavenant2025riemannian, luise2018differential, carlier2024displacement}). Therefore we also have
\[ g_p^\eps = \frac{1}{2} \nabla^2 [F_\eps(p, \cdot)](p), \]
where $\nabla^2$ denotes the Hessian.
\subsubsection{Self-transport and related RKHSs}
Instead of $\kk_{\mu_p}^\eps, \mathcal{H}_{\mu_p}^\eps, H_{\mu_p}^\eps$ (see \cref{subsec:sinkhorn}), we will write $\kk_p^\eps, \mathcal{H}_p^\eps, H_p^\eps$, respectively. 
We treat
\[ \pi^\eps \colon U \to \R^{N \times N}, \ \ K^\eps \colon U \to \R^{N \times N} \]
as matrix-valued functions, where $\pi^\eps(p)$ and $K^\eps(p)$ correspond to $\pi_{\mu_p, \mu_p}^\eps$ and $K^\eps_{\mu_p}$:
\[ \pi^\eps(p) = \bigg[ a_k a_l \kk_p^\eps(\iota_k(p), \iota_l(p)) \bigg]_{k, l}, \quad K^\eps (p) = \bigg[ a_l \kk_p^\eps(\iota_k(p), \iota_l(p)) \bigg]_{k, l} \]
for $p \in U$. 
Similarly, define
\begin{align*}
\ff \colon U &\to \R^N \\
p &\mapsto \bigg[ f^\eps_{\mu_p, \mu_p}(\iota_k(p)) \bigg]_k.
\end{align*}
We denote the off-diagonal mass of $\pi^\eps(p)$ by
\[ \alpha^\eps(p)  \vc = \sum_{k \neq l} \pi_{kl}^\eps(p) \]
for each $p \in U$.
 We will also consider vector-valued functions obtained from $p \mapsto H_p^\eps[\dot{\mu}_i(p)]$ for each $i \in [m]$:
\begin{equation}
\label{eq:def-h-i}
h_i^\eps \colon U \to \R^N, \quad h_i^\eps(p) \vc = \bigg[ H_p^\eps[\dot{\mu}_i(p)](\iota_k(p)) \bigg]_k. 
\end{equation}
Furthermore, 
define $m^\eps \colon U \to \R^N$ by
\begin{equation}
\label{eq:m-eps}
       m^\eps_k(p) \vc = T^\eps_{\mu_p}(\iota_k(p)) = \sum_{l} K^\eps_{kl}(p) \iota_l(p)
\end{equation}
for each $k \in [N]$ and $p \in U$. 

\subsubsection{Other quantities depending on $p$}

For each $p\in U$, define
\[
 B(p) \vc = \max_{i, k} \| v_{k, i}(p)\|, \ \ c(p) \vc = \min_{k \neq l} \|\iota_k(p) - \iota_l(p)\|^2,  \ \ D(p) \vc = \max_{k, l} \|\iota_k(p) - \iota_l(p)\|. 
\]
These appear in the quantitative bound in \cref{thm:conv-riem-metric}, which implies the convergence of $g^\eps$ to $g^0$ uniformly on compact subsets of $U$.
However, we will occasionally use big-O notation at a fixed $p \in U$, in which case we treat quantities depending on $\iota$ and its derivatives at $p$ as constants and focus on the behavior as $\eps \to 0$.

\section{Convergence of the Sinkhorn Riemannian metrics}
\label{sec:conv}

In this section, we study the convergence of $g^{\eps}$ as $\eps \to 0$. Our main result is the following.

\begin{theorem}[Convergence of $g^\eps$ as $\eps \to 0$]
\label{thm:conv-riem-metric}
For any $i, j \in [m]$, we have
\begin{equation}
\label{eq:quantitative-bound}
\left| g_{ij}^\eps - g_{ij}^0 \right| \leq 2 \alpha^\eps B^2 + \frac{6\alpha^\eps(BD)^2}{\eps}.
\end{equation}
In particular, since $\alpha^\eps \leq N e^{-c/\eps}$,
\begin{equation}
\label{eq:conv-riem-metric}
\lim_{\eps \to 0} g_{ij}^\eps(p) = g_{ij}^0(p)
\end{equation}
for each $p \in U$.
\end{theorem}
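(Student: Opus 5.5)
The plan is to make the abstract formula \eqref{eq:hess-sink} completely explicit for $\mu=\mu_p=\sum_k a_k\delta_{x_k}$, with $x_k=\iota_k(p)$, and then split it into a diagonal part and an off-diagonal part. The diagonal part should reproduce $g^0_{ij}$ up to $O(\alpha^\eps B^2)$. The off-diagonal part should be bounded by graph-Laplacian energy estimates, which is where $\alpha^\eps(BD)^2/\eps$ will come from.

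\textbf{Step 1: kernel derivatives and the representation of $g^\eps_{ij}$.} Differentiating the Schr\"odinger equation \eqref{eq:schrodinger-1-1} gives $\nabla f^\eps_{\mu,\mu}(y)=2(y-T^\eps_\mu(y))$. Hence
\[
\nabla_y \kk_\mu^\eps(x,y)=\tfrac{2}{\eps}\,\kk_\mu^\eps(x,y)\,\bigl(x-T^\eps_\mu(y)\bigr).
\]
Write $\psi=(\mathrm{id}-(K^\eps_\mu)^2)^{-1}H^\eps_\mu[\dot\mu_i]$. Then $\psi = H^\eps_\mu[\dot\mu_i]+(K^\eps_\mu)^2\psi$, and $(K_\mu^\eps)^2\psi$ depends only on the vector $\psi_s=(\psi(x_k))_k$. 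This vector solves $(I-(K^\eps)^2)\psi_s=h^\eps_i$, where $h^\eps_i$ has zero $a$-mean because $\langle\dot\mu_i,\ind_\X\rangle=0$. So $\psi_s=(I-(K^\eps)^2)^\dagger h^\eps_i$ up to constants, and constants are irrelevant because $\dot\mu_j$ kills them. This yields
\[
g^\eps_{ij}=\tfrac{\eps}{2}\sum_k a_k v_{k,j}\cdot\Bigl(\nabla H^\eps_p[\dot\mu_i](x_k)+\nabla\bigl(K^\eps_\mu[K^\eps\psi_s]\bigr)(x_k)\Bigr)=:\mathrm{I}+\mathrm{II}.
\]

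\textbf{Step 2: the term $\mathrm{I}$.} I compute the mixed second derivative $\nabla_y\nabla_x \kk$. It equals $\tfrac{2}{\eps}\kk\,\mathrm{Id}$ plus terms of the form $\tfrac{4}{\eps^2}\kk\,(x-T(y))(y-T(x))^\top$ plus terms involving $\nabla T^\eps_\mu$. I would evaluate these at pairs of atoms.
\begin{itemize}
\item The diagonal pairs $(x_k,x_k)$ give $\sum_k a_k K^\eps_{kk}\,v_{k,j}\cdot v_{k,i}$. Since $\sum_k a_k(1-K^\eps_{kk})=\alpha^\eps$, this differs from $g^0_{ij}$ by at most $\alpha^\eps B^2$.
\item The remaining terms carry factors $K^\eps_{kl}$ with $l\neq k$, or factors $x_k-m^\eps_k=\sum_l K^\eps_{kl}(x_k-x_l)$. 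Each of these has total weighted mass $O(\alpha^\eps)$ and displacements of size at most $D$.
\end{itemize}
Bounding them this way gives $\alpha^\eps B^2+O(\alpha^\eps (BD)^2/\eps)$. I expect the $\nabla T^\eps$ contributions to be absorbed the same way, since $\nabla T^\eps_\mu(x_k)=\tfrac{2}{\eps}\mathrm{Cov}_{K^\eps_{k\cdot}}(x)$ is again a sum over off-diagonal mass.

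\textbf{Step 3: the term $\mathrm{II}$, the main obstacle.} The difficulty is that $(I-(K^\eps)^2)^\dagger$ has norm of order $1/\alpha^\eps$, since the spectral gap of $I-K^\eps$ is of size comparable to the off-diagonal mass. A naive bound therefore loses all the smallness. To avoid this I would use the graph calculus of \cref{subsec:graph-op} on the graph $G$ with weights $W=\pi^\eps$. Because $K^\eps=D_G^{-1}W$ with $D_G=\mathrm{diag}(a)$, we have $I-K^\eps=L_G$ and $I-(K^\eps)^2=(I+K^\eps)L_G$.

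The key observation is that both $h^\eps_i$ and the linear functional $\psi_s\mapsto \sum_k a_k v_{k,j}\cdot\nabla(K_\mu^\eps[K^\eps\psi_s])(x_k)$ have divergence form. Using the formula for $\nabla_y\kk$, one finds $h^\eps_i=\dive_G X_i$ for an edge flow with
\[
X_i(k,l)=\tfrac{2}{\eps}\bigl(v_{l,i}\cdot(x_k-m_l)-v_{k,i}\cdot(x_l-m_k)\bigr),
\]
so that $\|X_i\|^2_{L^2_\wedge(E)}\lesssim \alpha^\eps (BD/\eps)^2$. The functional paired with $\dot\mu_j$ is similarly $\langle Y_j,\grad_G(\cdot)\rangle$ for a flow $Y_j$ of the same size. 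By \eqref{eq:div_adjoint}, $\mathrm{II}$ then becomes
\[
\mathrm{II}=\tfrac{\eps}{2}\,\langle Y_j,\grad_G\,(I+K^\eps)^{-1}K^\eps L_G^\dagger\,\dive_G X_i\rangle .
\]
The operator $\grad_G L_G^\dagger\dive_G$ is an orthogonal projection in $L^2_\wedge(E)$, so it has norm at most $1$. Since $K^\eps$ has spectrum in $[0,1]$, $(I+K^\eps)^{-1}K^\eps$ has norm at most $1$ and commutes with $L_G$. Cauchy--Schwarz then gives $|\mathrm{II}|\le \tfrac{\eps}{2}\|X_i\|\|Y_j\|=O(\alpha^\eps(BD)^2/\eps)$.

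\textbf{Step 4: constants and the limit.} Tracking constants through Steps 2 and 3 should give the stated $2\alpha^\eps B^2+6\alpha^\eps(BD)^2/\eps$. For the limit \eqref{eq:conv-riem-metric}, I need $\alpha^\eps\le Ne^{-c/\eps}$, i.e.\ $\pi^\eps_{kl}\le e^{-c/\eps}$ for $k\neq l$. This follows from the elementary estimate $\pi^\eps_{kl}\le\sqrt{\pi^\eps_{kk}\pi^\eps_{ll}}\,e^{-\|x_k-x_l\|^2/\eps}\le e^{-c/\eps}$ off the diagonal, using $f_k+f_l-\|x_k-x_l\|^2$ together with $\pi_{kk}\le a_k$.
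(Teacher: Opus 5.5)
Your proposal is essentially the paper's proof. Your term $\mathrm{I}$ is $\widetilde g^\eps_{ij}+r^\eps_{ij}$ and your term $\mathrm{II}$ is $s^\eps_{ij}$; you reach this split from $\psi=H^\eps_\mu[\dot\mu_i]+(K^\eps_\mu)^2\psi$, whereas the paper uses $\langle K^\eps_\mu\varphi,\psi\rangle_{\mathcal H^\eps_\mu}=\langle\varphi,\psi\rangle_{L^2(\mu)}$. Your divergence representation of $h^\eps_i$ and the Helmholtz-projection bound are exactly \cref{lemma:divergence} and \cref{lemma:grad-u-upper-bound}. The one step you asserted rather than carried out is the constant bookkeeping, and the bounds you actually wrote down do not give $2\alpha^\eps B^2+6\alpha^\eps(BD)^2/\eps$.

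\textbf{Step 2.} Note that $\nabla_y\nabla_x\kk^\eps_\mu(x,y)=\frac{2}{\eps}\kk^\eps_\mu(x,y)\bigl[\mathrm{Id}+\frac{2}{\eps}(y-T^\eps_\mu(x))(x-T^\eps_\mu(y))^\top\bigr]$ contains no $\nabla T^\eps_\mu$ terms at all. The off-diagonal pairs in its identity part give $\sum_{k\neq l}\pi^\eps_{kl}v_{k,j}^\top v_{l,i}$. This is bounded by $\alpha^\eps B^2$ with no factor $D^2/\eps$, so it cannot be absorbed into $O(\alpha^\eps(BD)^2/\eps)$; it is the second $\alpha^\eps B^2$ in \eqref{eq:quantitative-bound}. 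The displacement part is at most $4\alpha^\eps(BD)^2/\eps$, using $|x_k-m^\eps_k|\le D\sum_{l\neq k}K^\eps_{kl}$ and $\pi^\eps_{kk}\le a_k$.

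\textbf{Step 3.} The bound $\|(I+K^\eps)^{-1}K^\eps\|\le 1$ only yields $|\mathrm{II}|\le 4\alpha^\eps(BD)^2/\eps$, for a total constant of $8$. To get $6$ you need $\|(I+K^\eps)^{-1}K^\eps\|\le\frac12$, which holds because $\lambda/(1+\lambda)\le\frac12$ on $[0,1]$.

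You should also justify $\|\grad_G M L_G^\dagger\dive_G\|\le\|M\|$ for $M=(I+K^\eps)^{-1}K^\eps$. A naive product bound fails, since $\|L_G^\dagger\dive_G\|=(1-\lambda_2^\eps)^{-1/2}$. Commutation of $M$ with $L_G$ gives $0\preceq ML_G^\dagger\preceq\|M\|L_G^\dagger$, and Cauchy--Schwarz in the $L_G^\dagger$ semi-inner product then finishes. The paper instead moves $M$ onto the edge flow via \cref{lemma:mult-by-Keps-div}.

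\textbf{Step 4.} The estimate $\pi^\eps_{kl}\le e^{-c/\eps}$ only gives $\alpha^\eps\le N(N-1)e^{-c/\eps}$. Keeping the factor $\sqrt{a_ka_l}$ and using $\sum_{k\neq l}\sqrt{a_ka_l}\le(\sum_k\sqrt{a_k})^2\le N$ gives the stated bound, though either version suffices for the limit.
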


Before proceeding, we note that a result from \cite{weed2018explicit} on entropy regularization of finite-dimensional linear programs implies that $\alpha^\eps(p) = O\left( e^{-\Delta(p)/\eps} \right)$, where $\Delta(p) > 0$ is the suboptimality gap for the unregularized self-transport problem from $\mu_p$ to itself.
Given (\ref{eq:quantitative-bound}), this is
sufficient for deducing the convergence in (\ref{eq:conv-riem-metric}). 
However, in the case of self-transport, we can use the structure of the self-transport problem to further improve the exponent.

\begin{proposition}[Controlling the off-diagonal mass]
\label{prop: OfDiagMass}
Recall the definitions $\alpha^\eps(p) \vc = \sum_{k \neq l} \pi_{kl}^\eps(p)$ and $c(p) \vc = \min_{k \neq l} \|\iota_k(p) - \iota_l(p)\|^2$.
For each $p \in U$, it holds that 
\[
\alpha^\eps(p) \le N e^{- c(p)/\eps }.
\]
\end{proposition}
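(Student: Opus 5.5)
The plan is to use the explicit form of the optimal self-transport coupling together with the symmetric choice of Schr\"odinger potentials, $f^\eps_{\mu_p,\mu_p} = g^\eps_{\mu_p,\mu_p}$, to bound each off-diagonal entry of $\pi^\eps(p)$ directly. Fix $p \in U$ and write $x_k = \iota_k(p)$ and $f_k = \ff_k(p)$. By (\ref{eq:optimal-coupling-form}) and (\ref{eq:self-transport-kernel}),
\[
\pi^\eps_{kl}(p) = a_k a_l \exp\left( \frac{f_k + f_l - \|x_k - x_l\|^2}{\eps}\right).
\]
The argument then has two steps: an upper bound on $e^{f_k/\eps}$ coming from the diagonal entries, followed by summation over the off-diagonal pairs.

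\emph{Step 1 (bounding the potentials).} By (\ref{eq:schrodinger-1}), each row of $\pi^\eps(p)$ sums to $a_k$. Since all entries are nonnegative, the diagonal entry alone satisfies $\pi^\eps_{kk}(p) \le a_k$. On the diagonal the cost vanishes, so $\pi^\eps_{kk}(p) = a_k^2 e^{2f_k/\eps}$. This gives
\[
e^{f_k/\eps} \le a_k^{-1/2}
\]
for every $k \in [N]$. This is the only place where the symmetric normalization of the potentials is used. With a non-symmetric pair $(f,g)$ one would instead only control the product $e^{(f_k+g_k)/\eps}$, and this would not split as needed in Step 2.

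\emph{Step 2 (summing).} For $k \neq l$ we have $\|x_k - x_l\|^2 \ge c(p)$. Combining this with Step 1 gives
\[
\pi^\eps_{kl}(p) \le a_k a_l (a_k a_l)^{-1/2} e^{-c(p)/\eps} = \sqrt{a_k a_l}\, e^{-c(p)/\eps}.
\]
Summing over $k \neq l$,
\[
\alpha^\eps(p) \le e^{-c(p)/\eps}\sum_{k\neq l}\sqrt{a_k a_l} \le e^{-c(p)/\eps}\Big(\sum_k \sqrt{a_k}\Big)^2 \le N e^{-c(p)/\eps},
\]
where the last inequality is Cauchy--Schwarz together with $\sum_k a_k = 1$.

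I do not expect a genuine obstacle here. The one point that needs care is the observation in Step 1: the marginal constraint, applied to the diagonal entry with its zero cost, yields an $\eps$-independent bound on $e^{f_k/\eps}$, and this works only because of the symmetric representative of the potentials.
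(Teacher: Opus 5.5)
Your proof is correct and is essentially the paper's argument. The paper writes $\pi^\eps_{kl} = \sqrt{\pi^\eps_{kk}\pi^\eps_{ll}}\, e^{-\|\iota_k-\iota_l\|^2/\eps}$ and bounds $\pi^\eps_{kk}\le a_k$ via the row-sum constraint, which is exactly your bound $e^{\ff_k/\eps}\le a_k^{-1/2}$, and then applies the same Cauchy--Schwarz step to $\sum_{k\neq l}\sqrt{a_k a_l}$.
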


For example, if $a_k = \frac{1}{N}$ for all $k$, the suboptimality gap from \cite{weed2018explicit} is given by $\Delta(p) = \frac{2}{N} c(p)$, corresponding to permuting $k^*, l^*$ such that $(k^*, l^*) \in \argmin_{k \neq l} \|\iota_k(p) - \iota_l(p)\|^2$, and \cref{prop: OfDiagMass} is an improvement on the exponent for $N > 2$.

\begin{proof}
As the cost is zero on the diagonal and we consider self-transport, 
\[ \pi_{kk}^\eps = e^{2 \ff_k /\eps} \]
and thus
\[
\pi_{kl}^\eps = \sqrt{\pi_{kk}^\eps\pi_{ll}^\eps} \ e^{-\|\iota_k - \iota_l\|^2/\eps} \le \sqrt{a_k a_l}\ e^{-\|\iota_k - \iota_l\|^2/\eps} \leq  \sqrt{a_k a_l}\ e^{-c(p)/\eps},
\]
where the second inequality follows from the fact that an entry of the coupling cannot be larger than the corresponding row sum.
The claim follows by applying Cauchy-Schwarz to $\sum_{k \neq l} \sqrt{a_k a_l}$.
\end{proof}

Hence we will focus on proving the upper bound (\ref{eq:quantitative-bound}) in \cref{thm:conv-riem-metric}.
We break the proof down into two parts. First, in \cref{subsec:decomp}, we use the results in \cite{lavenant2025riemannian} to write
\begin{equation}
\label{eq:decomposition}
g^{\eps}_{ij} = \widetilde{g}^\eps_{ij} + r^\eps_{ij} + s^\eps_{ij}, 
\end{equation}
where $\widetilde{g}^\eps, r^\eps, s^\eps$ are given by
\begin{align}
\label{eq:def-g-r-s-eps}
\begin{split}
    \widetilde{g}^\eps_{ij} &\vc = \sum_{k, l} \pi_{kl}^\eps v_{k, i}^\top v_{l, j} \\
    r^\eps_{ij} &\vc = \frac{2}{\eps} \sum_{k, l} \pi_{kl}^\eps v_{k, i}^\top (\iota_l - m_k^\eps)(\iota_k - m_l^\eps)^\top v_{l, j}  \\
    s^\eps_{ij} &\vc = \frac{\eps}{2} \langle h_i^\eps, K^\eps(I-(K^\eps)^2)^\dagger h_j^\eps \rangle_a,
    \end{split}
\end{align}
where we recall from (\ref{eq:generalized-inverse}) that $A^\dagger$ denotes the generalized inverse of $A$ with respect to the \textit{$a$-weighted inner product $\langle \cdot, \cdot \rangle_a$}.
The terms $\widetilde{g}^\eps_{ij}$ and $r^\eps_{ij}$ are easily controlled; the $s^\eps_{ij}$ term is a bit trickier, as it involves a pseudo-inverse. We discuss how to control $s^\eps$ in \cref{subsec:s-eps}. 
Lastly, in \cref{subsec:cont}, we comment on which parts of our proof can be extended and which parts break down for measures with density.

\subsection{A decomposition of the Sinkhorn Riemannian metric}
\label{subsec:decomp}

In fact, the decomposition in (\ref{eq:decomposition}) is true more generally. Therefore, for this next lemma, we temporarily do not work under Assumption \ref{assump:1}.

\begin{lemma}
\label{lemma:decomp-general}

Let $\X \subset \R^d$ be the closure of a bounded open set. Suppose that $\{\mu_t\}_{t \in (-\delta, \delta)} \subset \mathcal{P}(\X)$ is continuously differentiable in $C^1(\X)^*$ with the weak-* topology, and that its derivative at $t = 0$ is given by $\dot{\mu} = - \dive(v \mu)$ for $\mu = \mu_0$ and a vector field $v \colon \X \to \R^d$ in $L^1(\X, \mu; \R^d).$ Then the following formulae hold
\begin{equation}
\label{eq:h-mu}
\frac{\eps}{2}H^\eps_\mu[\dot{\mu}](y) =  \int k^\eps_\mu(x, y) (y - T^\eps_{\mu}(x)) \cdot v(x) d\mu(x)
\end{equation}
\begin{equation}
    \label{eq:h-mu-norm}
    \frac{\eps}{2}\|H^\eps_\mu[\dot{\mu}]\|_{\mathcal{H}_\mu^\eps}^2 = \int v(x)^\top v(y)  + \frac{2}{\eps} \left(v(x)^\top \left( y - T^\eps_{\mu}(x)\right)\left(x - T^\eps_{\mu}(y) \right)^\top v(y) \right) d\pi_{\mu, \mu}^\eps(x, y),
\end{equation}
and we have the following decomposition for $g^\eps$:
\begin{equation}
\label{eq:decomp-general}
g^\eps_\mu (\dot{\mu}, \dot{\mu}) = \frac{\eps}{2}\left( \|H^\eps_\mu[\dot{\mu}]\|_{\mathcal{H}_\mu^\eps}^2 +  \langle H_\mu^\eps[\dot{\mu}], K_\mu^\eps (I- (K_\mu^\eps)^2)^{-1} H_\mu^\eps[\dot{\mu}] \rangle_{L^2(\mu)/\R} \right).
\end{equation}
\end{lemma}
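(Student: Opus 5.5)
Three computations are needed. First, (\ref{eq:h-mu}) comes from the definition of $H_\mu^\eps$ together with the weak form of $\dot\mu = -\dive(v\mu)$. Second, (\ref{eq:h-mu-norm}) uses the reproducing property. Third, (\ref{eq:decomp-general}) follows from an algebraic splitting of $(\mathrm{id}-(K_\mu^\eps)^2)^{-1}$.

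\textbf{Step 1.} For (\ref{eq:h-mu}) we write
\[
H_\mu^\eps[\dot\mu](y) = \langle \dot\mu, \kk_\mu^\eps(\cdot,y)\rangle = \int \nabla_x \kk_\mu^\eps(x,y)\cdot v(x)\,d\mu(x),
\]
which is legitimate because $\kk^\eps_\mu(\cdot,y)$ extends smoothly and $v\in L^1(\mu)$. We then compute $\nabla_x \kk^\eps_\mu(x,y) = \frac{1}{\eps}\kk^\eps_\mu(x,y)\bigl(\nabla f^\eps_{\mu,\mu}(x) - 2(x-y)\bigr)$. Differentiating the Schr\"odinger equation (\ref{eq:schrodinger-1-1}) gives the standard identity $\nabla f^\eps_{\mu,\mu}(x) = 2x - 2T^\eps_\mu(x)$, using the symmetric convention $f=g$ and the entropic map (\ref{eq:entropic-map}). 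Substituting, $\nabla_x\kk^\eps_\mu(x,y) = \frac{2}{\eps}\kk^\eps_\mu(x,y)\bigl(y - T^\eps_\mu(x)\bigr)$, and multiplying by $\eps/2$ gives (\ref{eq:h-mu}).

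\textbf{Step 2.} For (\ref{eq:h-mu-norm}) we use the reproducing property: $\|H_\mu^\eps[\dot\mu]\|^2_{\mathcal H} = \langle \dot\mu, H^\eps_\mu[\dot\mu]\rangle$, which is the double integral $\iint v(x)^\top \nabla_x\nabla_y \kk^\eps_\mu(x,y)\,v(y)\,d\mu\,d\mu$. We differentiate $\frac{2}{\eps}\kk^\eps_\mu(x,y)(y-T^\eps_\mu(x))$ in $y$. Using $\nabla_y\kk = \frac{2}{\eps}\kk\,(x-T^\eps_\mu(y))$, the $y$-derivative of the factor $y$ gives the term $\frac{2}{\eps}\kk\, I$. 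The other term is $\frac{4}{\eps^2}\kk\,(y-T(x))(x-T(y))^\top$, with transposes placed to match the stated order. Multiplying by $\eps/2$ and recalling $d\pi^\eps_{\mu,\mu} = \kk^\eps_\mu\, d(\mu\otimes\mu)$ yields exactly (\ref{eq:h-mu-norm}).

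\textbf{Step 3.} For (\ref{eq:decomp-general}) we start from (\ref{eq:hess-sink}) and use the operator identity
\[
(\mathrm{id}-K^2)^{-1} = \mathrm{id} + K^2(\mathrm{id}-K^2)^{-1}
\]
on $C^1(\X)/\R$. The first piece gives $\langle\dot\mu, H[\dot\mu]\rangle = \|H[\dot\mu]\|^2_{\mathcal H}$. For the second piece we need to move one factor of $K$ across the pairing:
\[
\langle \dot\mu, K_\mu^\eps[\psi]\rangle = \langle H^\eps_\mu[\dot\mu], \psi\rangle_{L^2(\mu)} \quad \text{for } \psi\in C(\X).
\]
This follows by Fubini from $K^\eps_\mu[\psi](y) = \int\psi(x)\kk^\eps_\mu(x,y)\,d\mu(x)$ and the symmetry of $\kk^\eps_\mu$. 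We apply it with $\psi = K(\mathrm{id}-K^2)^{-1}H[\dot\mu]$, using that $K$ commutes with $(\mathrm{id}-K^2)^{-1}$.

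\textbf{Main obstacle.} The delicate point is the quotient by constants. We must check two things. First, the pairing is well defined on $L^2(\mu)/\R$, which holds because $\dot\mu$ has zero mass, so $H[\dot\mu]$ has $\mu$-mean zero: $\int H[\dot\mu]\,d\mu = \langle\dot\mu, K^\eps_\mu \ind\rangle = \langle \dot\mu,\ind\rangle=0$ by (\ref{eq:schrodinger-1}). Second, $K$ maps constants to constants, so the identity above descends to the quotient with bounded inverse as guaranteed by \cite{lavenant2025riemannian}. A further technical care is justifying the differentiation under the integral and the identity for $\nabla f^\eps_{\mu,\mu}$. Both are routine given the smooth extension of the potentials and compactness of $\X$.
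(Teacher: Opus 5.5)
Your proposal is correct and follows essentially the same route as the paper. You use the identity $\nabla f^\eps_{\mu,\mu}(x) = 2x - 2T^\eps_\mu(x)$ for \eqref{eq:h-mu}, the reproducing property $\|H^\eps_\mu[\dot\mu]\|^2_{\mathcal{H}^\eps_\mu} = \langle \dot\mu, H^\eps_\mu[\dot\mu]\rangle$ with the mixed-derivative computation for \eqref{eq:h-mu-norm}, and the splitting $(\mathrm{id}-(K^\eps_\mu)^2)^{-1} = \mathrm{id} + (K^\eps_\mu)^2(\mathrm{id}-(K^\eps_\mu)^2)^{-1}$ for \eqref{eq:decomp-general}. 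The only difference is cosmetic: in the last step you move one factor of $K^\eps_\mu$ across the $C^1(\X)^*$ pairing by Fubini and the symmetry of $\kk^\eps_\mu$, whereas the paper rewrites the pairing as an inner product on $\mathcal{H}^\eps_\mu/\R$ and quotes $\langle K^\eps_\mu[\varphi],\psi\rangle_{\mathcal{H}^\eps_\mu} = \langle\varphi,\psi\rangle_{L^2(\mu)}$ from \cite{lavenant2025riemannian}, and your mean-zero check on $H^\eps_\mu[\dot\mu]$ handles the quotient correctly.
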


\begin{proof}
(\ref{eq:h-mu}) and (\ref{eq:h-mu-norm}) are similar to computations already done in \cite{lavenant2025riemannian}. For example, for (\ref{eq:h-mu}), use
\[H^\eps_\mu[\dot{\mu}](y) = \langle \dot{\mu}, k_\mu^\eps(\cdot, y) \rangle = \int \nabla_xk_\mu^\eps (x, y) v(x) d\mu(x) \]
\[ \nabla_xk_\mu^\eps (x, y) = k_\mu^\eps(x, y) \left( \frac{\nabla f_{\mu, \mu}^\eps(x) - 2 (x-y)}{\eps} \right) \]
with the identity $2x - \nabla f^\eps_{\mu, \mu}(x) = 2 T^\eps_{\mu}(x)$; this is similar to the computation done in \cite[Proposition~5.14]{lavenant2025riemannian}. 
For
(\ref{eq:h-mu-norm}), 
use 
\[ \|H^\eps_\mu[\dot{\mu}]\|_{\mathcal{H}_\mu^\eps}^2 = \langle \dot{\mu}, H^\eps_\mu[\dot{\mu}] \rangle, \]
then the computations in the proof of \cite[Theorem 4.17]{lavenant2025riemannian}.
To prove (\ref{eq:decomp-general}), we expand
\begin{align*}
   g^\eps_\mu(\dot{\mu}, \dot{\mu}) &= \frac{\eps}{2} \langle \dot{\mu}, (\mathrm{id} - (K_\mu^\eps)^2)^{-1} H_\mu^\eps [\dot{\mu}] \rangle \\
    &= \frac{\eps}{2} \langle H_\mu^\eps[\dot{\mu}], (\mathrm{id} - (K_\mu^\eps)^2)^{-1} H_\mu^\eps [\dot{\mu}] \rangle_{\mathcal{H}_\mu^\eps/ \R} \\
    &= \frac{\eps}{2}\left( \|H_\mu^\eps[\dot{\mu}]\|_{\mathcal{H}_\mu^\eps}^2 + \langle H_\mu^\eps[\dot{\mu}], (\mathrm{id} - (K_\mu^\eps)^2)^{-1}(K_\mu^\eps)^{2} H_\mu^\eps[\dot{\mu}] \rangle_{\mathcal{H}_\mu^\eps/\R}\right),
\end{align*}
where we recall from \cite{lavenant2025riemannian} that $\mathcal{H}_\mu^\eps/\R$ is isometric to the subspace $\{\ind_\X\}^\perp$ orthogonal to the constant function $\ind_\X$ in $\mathcal{H}_\mu^\eps$, and $H_\mu^\eps[\dot{\mu}] \in \{\ind_\X\}^\perp$.
To conclude, recall that $K^\eps_\mu$ is a compact self-adjoint operator from $\mathcal{H}_\mu^\eps$ to itself and
\[ \langle K_\mu^\eps [\varphi], \psi \rangle_{\mathcal{H}_\mu^\eps} = \langle H_\mu^\eps [\varphi \mu], \psi \rangle_{\mathcal{H}_\mu^\eps} = \langle \varphi, \psi \rangle_{L^2(\mu)} \]
for each $\varphi, \psi \in \mathcal{H}_\mu^\eps$ \cite[Eq. 4.3]{lavenant2025riemannian}.
\end{proof}

Upon specializing \cref{lemma:decomp-general} to our assumptions in Assumption \ref{assump:1} and polarizing, we obtain the desired decomposition from (\ref{eq:decomposition}).
\begin{lemma}
\label{lemma:decomp}
For any $\eps > 0,$ we have the following formulae:
\begin{equation}
\label{eq:h-i-formula}
\frac{\eps}{2}h_i^\eps =  \bigg[ \sum_{l} K^\eps_{kl} \left(\iota_k - m_l^\eps \right) \cdot v_{l, i}  \bigg]_k 
\end{equation}
\begin{equation}
g^{\eps}_{ij} = \widetilde{g}^\eps_{ij} + r^\eps_{ij} + s^\eps_{ij}, 
\end{equation}
where
\begin{align*}
    \widetilde{g}^\eps_{ij} &\vc = \sum_{k, l} \pi_{kl}^\eps v_{k, i}^\top v_{l, j} \\
    r^\eps_{ij} &\vc = \frac{2}{\eps} \sum_{k, l} \pi_{kl}^\eps v_{k, i}^\top (\iota_l - m_k^\eps)(\iota_k - m_l^\eps)^\top v_{l, j}  \\
    s^\eps_{ij} &\vc = \frac{\eps}{2} \langle h_i^\eps, K^\eps(I-(K^\eps)^2)^\dagger h_j^\eps \rangle_a
\end{align*}
and $A^\dagger$ denotes the generalized inverse of $A$ with respect to $\langle \cdot, \cdot \rangle_a$, see (\ref{eq:generalized-inverse}).
\end{lemma}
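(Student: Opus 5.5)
The plan is to specialize \cref{lemma:decomp-general} to $\mu=\mu_p$, $v(\iota_k(p))=v_{k,i}(p)$, and then polarize. The key step is checking that each abstract object becomes its matrix counterpart.

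First, $\dot\mu_i(p)=-\dive(v\mu_p)$ in the weak sense, with $v$ defined on the support by $v(\iota_k)=v_{k,i}$. This holds because $\langle\dot\mu_i,\varphi\rangle=\sum_k a_k\nabla\varphi(\iota_k)\cdot v_{k,i}=\int\nabla\varphi\cdot v\,d\mu_p$. The vector field $v$ is trivially in $L^1(\mu_p)$, and the path $t\mapsto\mu_{p+te_i}$ is $C^1$ in $C^1(\X)^*$ weak-$*$, as noted in the notation subsection.

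Evaluating (\ref{eq:h-mu}) at $y=\iota_k$ turns the integral into $\sum_l a_l \kk_p^\eps(\iota_l,\iota_k)(\iota_k-T^\eps_{\mu_p}(\iota_l))\cdot v_{l,i}$. Since $\kk$ is symmetric, $a_l\kk(\iota_k,\iota_l)=K^\eps_{kl}$, and $T^\eps_{\mu_p}(\iota_l)=m^\eps_l$ by (\ref{eq:m-eps}). This gives (\ref{eq:h-i-formula}).

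Next, (\ref{eq:h-mu-norm}) with $d\pi_{\mu,\mu}^\eps$ replaced by the discrete coupling $\pi^\eps_{kl}$ gives exactly $\widetilde g^\eps_{ii}+r^\eps_{ii}$ as the first term of (\ref{eq:decomp-general}).

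For the second term, $L^2(\mu_p)$ restricted to functions on the support is $\R^N$ with $\langle\cdot,\cdot\rangle_a$, and $K_\mu^\eps$ acts by the matrix $K^\eps$. The matrix $K^\eps$ is self-adjoint for $\langle\cdot,\cdot\rangle_a$ because $a_kK_{kl}=\pi_{kl}$ is symmetric. It is also Markov, so $K^\eps\mathbf 1=\mathbf 1$.

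The main obstacle is matching the quotient $L^2(\mu)/\R$ together with the inverse $(I-(K^\eps_\mu)^2)^{-1}$ on it to the pseudo-inverse $(I-(K^\eps)^2)^\dagger$. I would handle it as follows.
\begin{itemize}
\item Note that $h_i^\eps$ is $a$-orthogonal to $\mathbf 1$: indeed $\langle h_i^\eps,\mathbf 1\rangle_a=\langle\dot\mu_i,\ind_\X\rangle=0$, since $H^\eps_\mu[\dot\mu]\in\{\ind_\X\}^\perp$ and $\dot\mu$ kills constants.
\item Identify $L^2(\mu)/\R$ with $\mathbf 1^{\perp_a}$.
\item On this subspace, $I-(K^\eps)^2$ is invertible. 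This holds because the spectrum of $K^\eps$ lies in $[0,1]$ and the eigenvalue $1$ is simple: all entries of $K^\eps$ are positive, so Perron--Frobenius applies. Hence its inverse there coincides with the generalized inverse defined in (\ref{eq:generalized-inverse}).
\end{itemize}

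The inner product $\langle\cdot,\cdot\rangle_{L^2(\mu)/\R}$ of two elements of $\mathbf 1^{\perp_a}$ is then just $\langle\cdot,\cdot\rangle_a$. One subtlety is that the formula uses function values on $\X$ in $\mathcal H_\mu^\eps$, whereas we only keep values on the support. This is fine because $K_\mu^\eps$ only integrates against $\mu$, and $\langle K\varphi,\psi\rangle_{\mathcal H}=\langle\varphi,\psi\rangle_{L^2(\mu)}$ only sees support values.

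This yields $g^\eps_{ii}=\widetilde g^\eps_{ii}+r^\eps_{ii}+s^\eps_{ii}$. Applying the same argument to the direction $e_i+e_j$ and polarizing then gives the general $ij$ statement. Polarization is legitimate because every term is a symmetric bilinear form in the velocities: for $r^\eps$, swap $k\leftrightarrow l$ and use $\pi_{kl}=\pi_{lk}$; for $s^\eps$, use self-adjointness of $K^\eps(I-(K^\eps)^2)^\dagger$. Each term is also linear in $v$.
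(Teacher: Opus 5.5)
Your proposal is correct and is the paper's argument: the paper proves this lemma in one line by specializing Lemma~\ref{lemma:decomp-general} to $\mu_p$ and polarizing, and you supply the details it leaves implicit (identifying $L^2(\mu_p)/\R$ with $\ind^\perp$ under $\langle\cdot,\cdot\rangle_a$, invertibility of $I-(K^\eps)^2$ there via Perron--Frobenius so the inverse agrees with the generalized inverse, $K_{\mu_p}^\eps$ seeing only support values, and symmetry of each bilinear term for polarization). The one loose step is that $\langle h_i^\eps,\ind\rangle_a=\langle\dot\mu_i,\ind_\X\rangle$ follows from the Schr\"odinger equation $\sum_k a_k\kk_p^\eps(\cdot,\iota_k(p))=1$ rather than from $\mathcal H_p^\eps$-orthogonality to constants; this orthogonality is not even needed, since both the generalized inverse and the quotient discard constant components.
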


From the definitions of $\alpha^\eps, B$ and $\widetilde{g}^\eps$, one immediately obtains
\begin{equation}
\label{eq:tilde-g-bound}
\left| \widetilde{g}_{ij}^\eps - g_{ij}^0 \right| \leq 2\alpha^\eps B^2. 
\end{equation}
As $m_k^\eps = \sum_l K^\eps_{kl} \iota_l$, we also have
\begin{align}
\label{eq:iota-m-bound}
\begin{split}
\left| \iota_k - m_k^\eps \right| &\leq D \sum_{\substack{l \in [N], \\ l \neq k}} K_{kl}^\eps \\
\left| \iota_l - m_k^\eps \right| &\leq D \quad \quad \text{for } k \neq l
\end{split}
\end{align}
from which we can bound
\begin{equation}
    \label{eq:r-bound}
    \frac{\eps}{2}\left| r_{ij}^\eps \right| \leq \alpha^\eps (BD)^2 + \sum_{k} \pi_{kk}^\eps \left( BD \sum_{\substack{l \in [N], \\ l \neq k}} K_{kl}^\eps \right)^2 \leq 2\alpha^\eps (BD)^2.
\end{equation}
As $\alpha^\eps$ converges exponentially to 0 as $\eps \to 0$ (\cref{prop: OfDiagMass}, or \cite{weed2018explicit}), this implies that for each $p \in U$, $\widetilde{g}^\eps(p) \to g^0(p)$ and $r^\eps(p) \to 0$ as $\eps \to 0$. Controlling $s^\eps$ is a bit trickier, as we need to (pseudo-)invert $I - (K^\eps)^2.$

\begin{remark}
Fix any $p \in U$.
Although the decomposition in \cref{lemma:decomp-general} does not require finitely supported measures, the convergence of our upper bound (\ref{eq:r-bound}) on $|r_{ij}^\eps(p)|$ to 0 as $\eps \to 0$ depends on $\alpha^\eps(p)$ being $o(\eps)$. Similarly, we will see that the convergence of our upper bound on $|s_{ij}^\eps(p)|$ to 0 as $\eps \to 0$ also depends on $\alpha^\eps(p)$ being $o(\eps)$ as $\eps\to 0$.
When our measures are not finitely supported, analogous statements may not be true. We will discuss this in more detail in \cref{subsec:cont}.
\end{remark}

\subsection{Bounding $s^\eps$}
\label{subsec:s-eps}
In this section, we prove the following bound on $s^\eps$.

\begin{proposition}
\label{prop:bound-s-eps}
Recall that $s^\eps$ is given by 
\[s^\eps_{ij} = \frac{\eps}{2} \langle h_i^\eps, K^\eps(I-(K^\eps)^2)^\dagger h_j^\eps \rangle_a,\]
where $A^\dagger$ denotes the generalized inverse of $A$ with respect to $\langle \cdot, \cdot \rangle_a$, see (\ref{eq:generalized-inverse}).
For all $i, j \in [m]$, we have
        \begin{equation}
        \label{eq:s-eps-rate}
        \left| s_{ij}^\eps \right| \leq \frac{2\alpha^\eps\left( BD \right)^2}{\eps}.
        \end{equation}
\end{proposition}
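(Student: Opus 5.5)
The plan is to read $I - K^\eps$ as the random-walk graph Laplacian of a weighted graph, and to write $h_i^\eps$ as a graph divergence of an edge flow. Once that is done, the pseudo-inverse costs nothing: a Laplacian pseudo-inverse sandwiched between two divergences is controlled by the edge-flow norms.

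\textbf{Setup.} Take $G$ to be the complete graph on $[N]$ with weights $W_{kl} = \pi^\eps_{kl}$. These weights are symmetric because we are in the self-transport case. By (\ref{eq:schrodinger-1}) the degrees are $d_k = \sum_l \pi^\eps_{kl} = a_k$. Hence $L^2(V)$ is exactly $\R^N$ with $\langle\cdot,\cdot\rangle_a$, and by (\ref{eq:gl-rw}) we have $L_G = I - D_G^{-1}W = I - K^\eps$. So $K^\eps$ is self-adjoint for $\langle\cdot,\cdot\rangle_a$, and $L_G$, $K^\eps$ and $(I+K^\eps)^{-1}$ all commute. This gives
\[ K^\eps(I-(K^\eps)^2)^\dagger = C\, L_G^\dagger, \qquad C \vc= K^\eps (I+K^\eps)^{-1}. \]
Since the spectrum of $K^\eps$ lies in $[0,1]$ (nonnegativity was noted in \cref{subsec:graph-op}, and $K^\eps$ is Markov), we get $\|C\|_{a} \le 1/2$.

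\textbf{Divergence representation.} Define the antisymmetric edge flow
\[ X_i(k,l) \vc= (\iota_k - m^\eps_l)\cdot v_{l,i} - (\iota_l - m^\eps_k)\cdot v_{k,i}. \]
Its divergence is
\[ (\dive_G X_i)(k) = \frac{1}{a_k}\sum_l \pi^\eps_{kl}X_i(k,l) = \sum_l K^\eps_{kl}(\iota_k - m^\eps_l)\cdot v_{l,i} - \Big(\sum_l K^\eps_{kl}\iota_l - m^\eps_k\Big)\cdot v_{k,i}. \]
The last bracket vanishes by (\ref{eq:m-eps}). Comparing with (\ref{eq:h-i-formula}) therefore gives
\[ \frac{\eps}{2}h_i^\eps = \dive_G X_i. \]

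\textbf{Key estimate.} Substituting, we get
\[ s^\eps_{ij} = \frac{2}{\eps}\langle \dive_G X_i, C L_G^\dagger \dive_G X_j\rangle_a. \]
Write $C L_G^\dagger = (L_G^\dagger)^{1/2} C (L_G^\dagger)^{1/2}$; this is legitimate because the operators commute and are positive semidefinite. Cauchy--Schwarz then gives
\[ |s^\eps_{ij}| \le \frac{2}{\eps}\cdot\frac12 \,\langle \dive_G X_i, L_G^\dagger \dive_G X_i\rangle_a^{1/2}\langle \dive_G X_j, L_G^\dagger \dive_G X_j\rangle_a^{1/2}. \]
The step I expect to carry the whole argument is the inequality
\[ \langle \dive_G X, L_G^\dagger \dive_G X\rangle_a \le \|X\|^2_{L^2_\wedge(E)}. \]
I would prove it using the adjoint relation (\ref{eq:div_adjoint}) together with $L_G = \grad_G^*\grad_G$. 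The operator $\grad_G L_G^\dagger \grad_G^*$ is the orthogonal projection onto the range of $\grad_G$ in $L^2_\wedge(E)$. Hence the quadratic form equals $\|P_{\mathrm{Im}\,\grad_G} X\|^2 \le \|X\|^2$. This is the discrete Helmholtz/Hodge picture from \cite{lim2020hodge}.

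\textbf{Bounding the flow.} It remains to estimate $\|X_i\|^2_{L^2_\wedge(E)}$. Note that $X_i(k,k) = 0$. For $k \ne l$, each $m^\eps$ is a convex combination of the atoms, so (\ref{eq:iota-m-bound}) gives $|\iota_k - m^\eps_l| \le D$, and therefore $|X_i(k,l)| \le 2BD$. Consequently
\[ \|X_i\|^2_{L^2_\wedge(E)} = \tfrac12\sum_{k\ne l}\pi^\eps_{kl}X_i(k,l)^2 \le 2\alpha^\eps (BD)^2. \]
Plugging this into the key estimate yields $|s^\eps_{ij}| \le \frac{1}{\eps}\cdot 2\alpha^\eps(BD)^2$, which is exactly (\ref{eq:s-eps-rate}).
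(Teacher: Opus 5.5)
Your argument is correct and gives exactly the constant in (\ref{eq:s-eps-rate}). The ingredients are the same as the paper's: identifying $I-K^\eps$ with the random-walk Laplacian $L_G$, writing $\frac{\eps}{2}h_i^\eps=\dive_G X_i$, the Helmholtz projection bound (\cref{lemma:grad-u-upper-bound}), and the flow bound $\|X_i\|^2_{L^2_\wedge(E)}\le 2\alpha^\eps(BD)^2$.

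The one real difference is how you absorb the factor $C=K^\eps(I+K^\eps)^{-1}$. The paper pushes it onto the edge side. By \cref{lemma:mult-by-Keps-div}, $C\,\dive_G X=\dive_G\,(2I+\grad_G\dive_G)^{-1}(I+\grad_G\dive_G)X$. This edge operator has norm at most $\frac12$ because $\grad_G\dive_G$ has spectrum in $[-1,0]$. The paper then applies the bilinear bound (\ref{eq:neg-sob-norm-bound}) to the two different flows $(2I+\grad_G\dive_G)^{-1}(I+\grad_G\dive_G)X_i$ and $X_j$.

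You instead stay on the vertex side. Since $C$ and $L_G^\dagger$ are commuting positive semidefinite functions of $K^\eps$, you split $CL_G^\dagger=(L_G^\dagger)^{1/2}C(L_G^\dagger)^{1/2}$, so only the diagonal form of the projection bound is needed. This is essentially the shortcut the paper mentions after (\ref{eq:s-eps-rearr}) via (\ref{eq:comparison-w-neg-sob}). For this proposition it is shorter, since it does not need \cref{lemma:mult-by-Keps-div}.

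What the paper's route buys shows up in \cref{sec:derivs}. After differentiating in $p$, factors such as $\partial^{|J_d|}K^\eps/\partial p_{J_d}$ neither commute with $K^\eps$ nor are positive semidefinite, so your square-root splitting does not extend. By contrast, representing $(I+K^\eps)^{-1}K^\eps h_i^\eps$ and $(I-K^\eps)^\dagger h_j^\eps$ separately through edge flows of controlled norm propagates factor by factor through \cref{cor:p-j-divergence}.
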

\cref{thm:conv-riem-metric} then follows from (\ref{eq:tilde-g-bound}), (\ref{eq:r-bound}) and \cref{prop:bound-s-eps}. Before proving \cref{prop:bound-s-eps}, we discuss why a crude bound with the spectral gap is in general not enough for proving the convergence of $s^\eps(p) \to 0$ as $\eps \to 0$ at $p \in U$.

\begin{remark}[An attempt at bounding $s^\eps$ via the spectral gap]
\label{rmk:spec-gap}
Fix any $p \in U$.
From \cite{lavenant2025riemannian}, $K^\eps(p)$ has eigenvalues satisfying $1 = \lambda_1^\eps > \lambda_2^\eps \geq \cdots  \geq \lambda_N^\eps \geq 0$, so that $1- \lambda_2^\eps$ corresponds to the spectral gap of the Markov chain given by $K^\eps(p).$
By (\ref{eq:h-i-formula}),
\[ \left\| \frac{\eps}{2} h_i^\eps(p) \right\|_a \lesssim \alpha^\eps(p) = O \left( e^{-c(p)/\eps} \right), \]
where the multiplicative constants can depend on $\iota(p)$ and $v_{k, i}(p)$ for $i\in [m], k \in [N]$.
As we treat $N$ and $a_1, \ldots, a_N$ as multiplicative constants as well, there exists $C > 0$, independent of $\eps$, such that
\[ \frac{1}{C}\alpha^\eps(p) \leq \left\| I - K^\eps(p) \right\|_{op} \leq C\alpha^\eps(p)\]
by the equivalence of norms on a finite-dimensional vector space.
Therefore a crude bound of $s^\eps_{ij}(p)$ is given by
\[ \left| s_{ij}^\eps (p) \right| \leq \frac{\lambda_2^\eps}{1- (\lambda_2^\eps)^2} \left\| \frac{\eps}{2} h_i^\eps(p) \right\|_a\left\|  h_j^\eps(p) \right\|_a  \lesssim \frac{\left\| I - K^\eps(p) \right\|_{op}^2}{\eps (1-\lambda_2^\eps)}. \]
If there exists $C' > 0$ such that $\left\| I - K^\eps(p) \right\|_{op} \leq C' (1-\lambda_2^\eps)$ for all $\eps > 0$ (e.g., if we only have $N = 2$ particles), then we obtain the same rate of convergence as (\ref{eq:s-eps-rate}), up to a multiplicative constant. More generally, if
\begin{equation}
\label{eq:spectral-gap-assump}
\lim_{\eps \to 0} \frac{e^{-2c(p)/\eps}}{\eps(1 - \lambda_2^\eps)} = 0
\end{equation}
then one can show that $s_{ij}^\eps(p) \to 0$ as $\eps \to 0$.
However, from \cite{lavenant2025riemannian} we only have the bound
\begin{equation}
\label{eq:spec-gap-lower-bound}
1-\lambda_2^\eps \geq e^{-4D(p)^2/\eps}, 
\end{equation}
i.e., a lower bound on the spectral gap which is controlled by the \emph{maximum} distance between two particles, whereas the upper bound on $\left\| I - K^\eps(p) \right\|_{op}$ is controlled by the \emph{minimum} distance between two particles. We will also see this behavior in the following example.
\end{remark}

\begin{example}[Example where the spectral gap assumption (\ref{eq:spectral-gap-assump}) does not hold]
\label{ex:violates-spec-gap}
Let $T > 0$, $L > \frac{\sqrt{2}}{2}$ and $\mu_t \vc = \frac{1}{4}\sum_{k = 1}^4 \delta_{\iota_k(t)}$ for $t \in (-T, T)$, where $\iota \colon (-T, T) \to (\R^3)^4$ is smooth and satisfies
\[\iota_1(0) = \left( -\frac{1}{2}, 0, 0 \right), \iota_2(0) = \left( \frac{1}{2}, 0, 0 \right), \iota_3(0) = \left(0, \frac{1}{2}, L \right), \iota_4(0) = \left(0, -\frac{1}{2}, L \right). \]
Then for $D^2 = \frac{1}{2} + L^2$,
\[ K^\eps(0) = \frac{1}{\beta^\eps}
\begin{bmatrix}

1 & \exp \left( - \frac{1}{\eps} \right) & \exp \left( - \frac{D^2}{\eps} \right) & \exp \left( - \frac{D^2}{\eps} \right) \\
\exp \left( - \frac{1}{\eps} \right) & 1 & \exp \left( - \frac{D^2}{\eps} \right)& \exp \left( - \frac{D^2}{\eps} \right)\\
\exp \left( - \frac{D^2}{\eps} \right)&\exp \left( - \frac{D^2}{\eps} \right) &1 & \exp \left( - \frac{1}{\eps} \right) \\
\exp \left( - \frac{D^2}{\eps} \right)&\exp \left( - \frac{D^2}{\eps} \right) & \exp \left( - \frac{1}{\eps} \right) & 1
\end{bmatrix},
\]
where $\beta^\eps \vc = 1 + \exp \left( - \frac{1}{\eps} \right) + 2 \exp \left( - \frac{D^2}{\eps} \right)$.
(As $L > \frac{\sqrt{2}}{2}$, $D > 1$ is $\max_{k, l} \|\iota_k(0) - \iota_l(0)\|$.) 
For all $\eps > 0$, $K^{(\eps)}(0)$ has eigenvectors
    \[ w_2 = \begin{bmatrix}
        1\\1\\-1\\-1
    \end{bmatrix}, \ w_3 = \begin{bmatrix}
        1 \\ -1 \\ -1 \\ 1
    \end{bmatrix} , \ w_4 = \begin{bmatrix}
        1 \\ -1 \\ 1 \\ -1
    \end{bmatrix} \]
corresponding to eigenvalues
\begin{align*}
    \lambda_2^\eps &= \frac{1}{\beta^\eps} \left( 1 + \exp \left( - \frac{1}{\eps} \right) - 2 \exp \left( - \frac{D^2}{\eps} \right) \right) = 1 - \frac{4\exp \left(-D^2/\eps \right)}{\beta^\eps} \\
    \lambda_3^\eps &= \lambda_4^\eps = \frac{1}{\beta^\eps} \left( 1- \exp\left( -\frac{1}{\eps} \right) \right).
\end{align*}
Since we can make $L$ (and hence $D$) arbitrarily large, (\ref{eq:spectral-gap-assump}) need not hold.
\end{example}

Hence a crude bound on $s_{ij}^\eps$ via the spectral gap will not be enough.
To get around this, the main observation is that $\frac{\eps}{2} h^\eps_i$ can be written as the graph divergence of an edge flow, whose norm is easy to control.

\begin{lemma}
\label{lemma:divergence}
    Fix any $\eps > 0$ and $p \in U$. Consider the graph $G$ with $N$ vertices and edge weights $W = \pi^\eps(p)$. 
    Then: 
    \begin{enumerate}
        \item The matrix $I - K^\eps(p)$ is precisely the graph Laplacian $L_G$ of $G$.
        \item For each $i \in [m]$,
    \[ \frac{\eps}{2} h_i^\eps(p) = \dive_G X^{i}(p),  \]
    where
    \[ X^{i}_{kl}(p) \vc = \left( \iota_k (p) - m_l^\eps(p) \right)\cdot v_{l, i}(p) - \left( \iota_l(p) - m_k^\eps(p) \right) \cdot v_{k, i}(p). \]

    \end{enumerate}

\end{lemma}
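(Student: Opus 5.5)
The plan is to verify both parts by direct computation from the definitions in \cref{subsec:graph-op}, using the Schr\"odinger equations (\ref{eq:schrodinger-1}) and formula (\ref{eq:h-i-formula}) from \cref{lemma:decomp}.

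For part 1, we first compute the degrees of $G$. Since $W = \pi^\eps(p)$ has row sums equal to the first marginal, (\ref{eq:schrodinger-1}) gives
\[ d_k = \sum_l \pi^\eps_{kl} = a_k \sum_l a_l \kk_p^\eps(\iota_k,\iota_l) = a_k. \]
Hence $D_G = \mathrm{diag}(a_1,\dots,a_N)$, and
\[ (D_G^{-1}W)_{kl} = a_k^{-1}a_ka_l\kk_p^\eps(\iota_k,\iota_l) = K^\eps_{kl}. \]
By (\ref{eq:gl-rw}), $L_G = I - K^\eps(p)$. Note that $W$ is symmetric because $\kk_p^\eps$ is symmetric, so $G$ is a valid weighted graph. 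We also observe that $L^2(V)$ carries exactly the inner product $\langle\cdot,\cdot\rangle_a$.

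For part 2, we first check that $X^i$ is antisymmetric. Swapping $k$ and $l$ in its definition visibly negates it, so $X^i \in L^2_\wedge(E)$. Next we compute
\[ (\dive_G X^i)(k) = \frac{1}{a_k}\sum_l a_ka_l\kk^\eps_{kl} X^i_{kl} = \sum_l K^\eps_{kl}X^i_{kl}. \]
This splits into two sums. The first, $\sum_l K^\eps_{kl}(\iota_k - m_l^\eps)\cdot v_{l,i}$, is exactly the $k$-th entry of $\frac{\eps}{2}h_i^\eps$ by (\ref{eq:h-i-formula}). It therefore remains to show that the second sum vanishes:
\[ \sum_l K^\eps_{kl}(\iota_l - m_k^\eps)\cdot v_{k,i} = \Big(\sum_l K^\eps_{kl}\iota_l - m_k^\eps\sum_l K^\eps_{kl}\Big)\cdot v_{k,i}. \]
This is zero because $\sum_l K^\eps_{kl}=1$ (the Markov property) and $m_k^\eps = \sum_l K^\eps_{kl}\iota_l$ by (\ref{eq:m-eps}).

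There is no real obstacle in this argument. The only points requiring care are confirming that the degrees equal $a_k$, which makes the graph inner product coincide with $\langle\cdot,\cdot\rangle_a$, and seeing that the "correction" term added to make $X^i$ antisymmetric has zero divergence precisely because $m^\eps$ is the barycentric projection.
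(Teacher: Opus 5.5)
Your proposal is correct and follows essentially the same route as the paper. For part 1 you identify the degrees $d_k = a_k$ from the Schr\"odinger equations, so that $D_G^{-1}W = K^\eps(p)$. For part 2 you compute $(\dive_G X^i)_k = \sum_l K^\eps_{kl} X^i_{kl}$ and use $\sum_l K^\eps_{kl} = 1$ together with $m_k^\eps = \sum_l K^\eps_{kl}\iota_l$ to cancel the antisymmetrizing term, which leaves exactly (\ref{eq:h-i-formula}). The only difference is that you spell out the degree computation, which the paper leaves as following from the definitions.
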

\begin{proof} 
    1. follows from the definitions of $K^\eps, \pi^\eps$ and the graph Laplacian $L_G$. For 2, by \cref{lemma:decomp},
    \[ \frac{\eps}{2}h_i^\eps =  \bigg[ \sum_{l} K^\eps_{kl} \left(\iota_k - m_l^\eps \right) \cdot v_{l, i}  \bigg]_k. \]
    On the other hand, $X^i(p)$ is anti-symmetric by construction, and the $k$th entry of $\dive_G X^i(p)$ is given by
    \begin{align*}
    (\dive_G X^i(p))_k &= \frac{1}{a_k} \sum_l \pi_{kl}^\eps(p) X^i_{kl}(p) \\
    &= \sum_l K^\eps_{kl}(p) \left( \left( \iota_k(p)  - m_l^\eps(p) \right)\cdot v_{l, i}(p) - \left( \iota_l(p) - m_k^\eps(p) \right) \cdot v_{k, i}(p) \right) \\
    &= \sum_l K^\eps_{kl}(p) \left( \iota_k(p)  - m_l^\eps(p) \right)\cdot v_{l, i}(p), 
    \end{align*}
    where the last line follows from the definition of $m^\eps$ (\ref{eq:m-eps}). 
\end{proof}

We will also need the following lemma.

\begin{lemma}
\label{lemma:grad-u-upper-bound}
Let $G = (V, E, W)$ be a graph with $|V|$ finite and symmetric, non-negative edge weights. Consider any two edge flows $X, Y \in L^2_\wedge (E)$, and define $u \vc = L_G^\dagger \dive_G Y,$ where $L_G^\dagger$ is the generalized inverse of $L_G$ as an operator from $L^2(V)$ to $L^2(V)$. Then
\begin{equation}
    \label{eq:upper-bound-gradu}
    \left\| \grad_G u \right\|_{L^2_\wedge(E)} \leq \left\| Y \right\|_{L^2_\wedge(E)} 
\end{equation}
    and we also have
\begin{equation}
\label{eq:neg-sob-norm-bound}
\left | \left \langle  \dive_G X, L_G^\dagger \dive_G Y  \right \rangle_{L^2(V)} \right| \leq \left\| X \right\|_{L^2_\wedge(E)} \left \| Y \right\|_{L^2_\wedge(E)}. 
\end{equation}
\end{lemma}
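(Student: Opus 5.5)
The plan is to exploit two facts: $L_G$ is self-adjoint and positive semidefinite on $L^2(V)$, and $\dive_G$ is the negative adjoint of $\grad_G$, as recorded in (\ref{eq:div_adjoint}). This turns both inequalities into statements about an orthogonal projection in $L^2_\wedge(E)$.

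First I record that $L_G = -\dive_G \grad_G = \grad_G^* \grad_G$, where the adjoint is taken with respect to the inner products on $L^2(V)$ and $L^2_\wedge(E)$. Hence $L_G$ is self-adjoint and positive semidefinite on $L^2(V)$, and its generalized inverse $L_G^\dagger$ is defined spectrally as in (\ref{eq:generalized-inverse}). Moreover $\ker L_G = \ker \grad_G$, since $\langle L_G f, f\rangle_{L^2(V)} = \|\grad_G f\|^2_{L^2_\wedge(E)}$. It follows that $\mathrm{Im}\, L_G = (\ker \grad_G)^\perp = \mathrm{Im}\, \grad_G^* = \mathrm{Im}\, \dive_G$.

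Therefore $\dive_G Y$ lies in the range of $L_G$, and $L_G L_G^\dagger$ acts as the identity there. Thus $u = L_G^\dagger \dive_G Y$ satisfies $L_G u = \dive_G Y$, that is, $-\dive_G \grad_G u = \dive_G Y$. Equivalently, $\dive_G(\grad_G u + Y) = 0$.

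Next I use the adjoint relation (\ref{eq:div_adjoint}). Since $\grad_G u + Y$ is divergence-free, it is orthogonal in $L^2_\wedge(E)$ to every gradient $\grad_G f$. Consequently $-\grad_G u$ is the orthogonal projection of $Y$ onto $\mathrm{Im}\, \grad_G$. Because projections are contractions, $\|\grad_G u\|_{L^2_\wedge(E)} \le \|Y\|_{L^2_\wedge(E)}$, which is (\ref{eq:upper-bound-gradu}). One can also see this directly: from $\langle \grad_G u + Y, \grad_G u\rangle = 0$ we get $\|\grad_G u\|^2 = -\langle Y, \grad_G u\rangle \le \|Y\|\|\grad_G u\|$ by Cauchy--Schwarz.

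For (\ref{eq:neg-sob-norm-bound}), I apply (\ref{eq:div_adjoint}) again:
\[
\langle \dive_G X, u\rangle_{L^2(V)} = -\langle X, \grad_G u\rangle_{L^2_\wedge(E)}.
\]
By Cauchy--Schwarz, the absolute value of the right-hand side is at most $\|X\|\,\|\grad_G u\|$, and by (\ref{eq:upper-bound-gradu}) this is at most $\|X\|\,\|Y\|$.

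The only delicate point is justifying $L_G L_G^\dagger \dive_G Y = \dive_G Y$. This requires the range identification $\mathrm{Im}\, \dive_G = (\ker L_G)^\perp$ in the weighted inner product on $L^2(V)$. Vertices with degree zero must also be handled: they have $d_k = 0$, so $L^2(V)$ is degenerate there. In the intended application all weights are positive, since $W = \pi^\eps(p)$, so I will note this assumption, or alternatively restrict to vertices with $d_k > 0$.
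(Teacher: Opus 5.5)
Your proposal is correct and follows essentially the same route as the paper: establish $\dive_G Y \in (\ker L_G)^\perp$ so that $L_G u = \dive_G Y$, observe that $Y + \grad_G u$ is divergence-free and hence orthogonal to $\Ima \grad_G$, and conclude with Cauchy--Schwarz. The only difference is in the first step. The paper checks $\dive_G Y \perp \ker L_G$ directly via (\ref{eq:div_adjoint}), using that $(\grad_G v)_{kl}=0$ on edges with $W_{kl}\neq 0$. You instead invoke $\Ima \grad_G^* = (\ker \grad_G)^\perp$, which is fine provided $\ker \grad_G$ is read as $\{f : \|\grad_G f\|_{L^2_\wedge(E)} = 0\}$ when some weights vanish; your derivation implicitly does this.
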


\begin{proof}
We claim that $\dive_G Y \in (\ker L_G)^\perp$, the orthogonal complement of $\ker L_G$ in $L^2(V)$. Indeed, if $v \in \ker L_G$, then
\[ 
\langle v, L_G v \rangle_{L^2(V)} = \left\langle \grad_G v, \grad_G v \right\rangle_{L^2_\wedge(E)} = 0, 
\]
so that $(\grad_G v)_{kl}$ is zero whenever the edge weight $W_{kl}$ is non-zero. Thus
\[
\langle v, \dive_G Y \rangle_{L^2(V)} = -\langle \grad_G v, Y \rangle_{L^2_\wedge(E)} = 0. 
\]
As $u  = L_G^\dagger \dive_G Y$, this implies that 
$L_G u = \dive_G Y$,
    so
    $Y + \grad_G u$ must belong to $\ker \dive_G$, which is orthogonal to $\Ima \grad_G$ as $\dive_G$ is the negative adjoint of $\grad_G$.
    Therefore (\ref{eq:upper-bound-gradu}) holds
    and we obtain
    \[ \left | \left \langle  \dive_G X, L_G^\dagger \dive_G Y  \right \rangle_{L^2(V)} \right| = \left| \langle X, \grad_G u \rangle_{L^2_\wedge(E)} \right| \leq \left\| X \right\|_{L^2_\wedge(E)} \left \| Y \right\|_{L^2_\wedge(E)} \]
    by Cauchy-Schwarz and (\ref{eq:upper-bound-gradu}).
\end{proof}

\begin{remark}
We remark on some analogues of the objects in \cref{lemma:grad-u-upper-bound}.
\begin{itemize}[label = {--}]
    \item For $f \in (\ker L_G)^\perp$, solving $L_G u = f$ is a graph version of solving a Poisson equation, and 
    \[ \left\| f \right\|_{\dot{H}^{-1}(G)} \vc = \left( \langle f, L_G^\dagger f \rangle_{L^2(V)} \right)^{1/2}\] can be viewed as a graph version of a homogeneous negative Sobolev norm. 
    Other works have studied similar objects, although sometimes with the unnormalized graph Laplacian, i.e.,  $L_G^{un} \vc = D_G-W$.
    (Recall that the graph Laplacian $L_G$ as we have defined it corresponds to the graph Laplacian with a random walk normalization (\ref{eq:gl-rw}).) For example, see \cite{calder2020poisson} for a discussion of the graph Poisson equation with $L_G^{un}$; \cite{bandeira2026mathematics} for the relationship between the pseudo-inverse of $L_G^{un}$, the commute time distance and effective resistance; and \cite{robertson2026resistance} for the connection with linearized optimal transport on a graph.
    \item One way to view (\ref{eq:upper-bound-gradu}) is via a graph version of the Helmholtz decomposition; see \cite{lim2020hodge}. (\ref{eq:upper-bound-gradu}) is also similar to why the tangent space of Wasserstein-2 space at $\mu \in \mathcal{P}_2(\R^d)$ is defined as
    \[T_\mu \mathcal{P}_2(\R^d) \vc = \overline{ \{ \nabla \varphi \colon \varphi \in C_c^\infty(\R^d) \}}^{L^2(\mu)}.\]
    Indeed, as noted in the introduction, $v \in T_\mu \mathcal{P}_2(\R^d)$ if and only if
\[
     \|v\|_{L^2(\mu)} \leq \left\| v+ w \right\|_{L^2(\mu)}
\]
for all vector fields $w \in L^2(\mu)$ such that $\dive(w\mu) = 0$ \cite[Lemma 8.4.2]{ambrosio2008gradient}.
\end{itemize}
    
\end{remark}


For convenience, let us rearrange
$s^\eps$ to
\begin{equation}
\label{eq:s-eps-rearr}
s^\eps_{ij}  = \frac{\eps}{2} \left \langle  (I + K^\eps)^{-1} K^\eps h_i^\eps, (I-K^\eps)^\dagger h_j^\eps \right \rangle_a. 
\end{equation}
At any fixed $p \in U$,
the spectrum of $K^\eps(p)$ lies in $[0, 1]$ \cite{lavenant2025riemannian}, so that $(I + K^\eps(p))^{-1}$ is well-defined.
Rearranging $s^\eps$ in this way is not necessary to show $s^\eps(p) \to 0$: we can use that
\begin{equation}
    \label{eq:comparison-w-neg-sob}
      \langle h, K^\eps(I-(K^\eps)^2)^\dagger h \rangle_a \leq \frac{1}{2} \langle h, (I-K^\eps)^\dagger h \rangle_a
\end{equation}
with $h = h_i^\eps$ or $h = h_i^\eps + h_j^\eps$, then apply \cref{lemma:divergence} and \cref{lemma:grad-u-upper-bound} and polarize.

Nonetheless, (\ref{eq:s-eps-rearr}) will be useful in \cref{sec:derivs}, where it is convenient to control the derivatives of $(I + K^\eps)^{-1} K^\eps  h_i^\eps$ and $(I-K^\eps)^\dagger h_j^\eps$ separately.
We now show that $\frac{\eps}{2} (I + K^\eps)^{-1} K^\eps h_i^\eps$ can also be expressed in terms of the graph divergence of an edge flow. 

\begin{lemma}
\label{lemma:mult-by-Keps-div}
Fix any $\eps > 0$ and $p \in U$. Consider the graph $G$ with $N$ vertices and edge weights $W = \pi^\eps(p)$. Then for any edge flow $X \in L^2_\wedge(E)$,
\[ K^\eps(p) \dive_G X = \dive_G Y, \quad (I+ K^\eps(p))^{-1} \dive_G X = \dive_G Z, \]
where edge flows $Y, Z \in L^2_\wedge(E)$ are given by
\begin{align*}
    Y &= (I + \grad_G \dive_G) X \\
    Z &= (2I + \grad_G \dive_G )^{-1}X.
\end{align*}
In particular, $\grad_G \dive_G$ is a self-adjoint operator from $L^2_\wedge(E)$ to itself with eigenvalues lying in $[-1, 0]$, so that $2I + \grad_G \dive_G$ is indeed invertible, and the operator norm of \[(2I + \grad_G \dive_G)^{-1} \colon L^2_\wedge(E) \to L^2_\wedge(E)\] is at most 1.
\end{lemma}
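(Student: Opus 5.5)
The whole argument rests on one identity, $L_G = -\dive_G\grad_G = I - K^\eps(p)$, which holds by \cref{lemma:divergence}(1). Rearranging it gives $K^\eps(p) = I + \dive_G \grad_G$ as operators on $L^2(V)$. Applying this to $\dive_G X$ yields
\[ K^\eps(p)\dive_G X = \dive_G X + \dive_G\grad_G\dive_G X = \dive_G\big((I+\grad_G\dive_G)X\big) = \dive_G Y, \]
which is the first claim. For the second claim, the same substitution gives $I + K^\eps(p) = 2I + \dive_G\grad_G$. So for any edge flow $Z$,
\[ (I+K^\eps(p))\dive_G Z = \dive_G\big((2I + \grad_G\dive_G)Z\big). \]
Once we know $2I+\grad_G\dive_G$ is invertible, we take $Z = (2I+\grad_G\dive_G)^{-1}X$. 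Then $(I+K^\eps(p))\dive_G Z = \dive_G X$. Since $I + K^\eps(p)$ is invertible (its spectrum lies in $[1,2]$ by \cite{lavenant2025riemannian}), applying its inverse gives $(I+K^\eps(p))^{-1}\dive_G X = \dive_G Z$.

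The spectral claims about $A \vc= \grad_G\dive_G$ on $L^2_\wedge(E)$ come next. By (\ref{eq:div_adjoint}), $\dive_G = -\grad_G^*$, so $A = -\grad_G\grad_G^*$. Hence $A$ is self-adjoint and negative semidefinite, and its eigenvalues are $\le 0$. For the lower bound I would use the standard fact that $\grad_G\grad_G^*$ and $\grad_G^*\grad_G$ have the same nonzero spectrum. Here $\grad_G^*\grad_G = -\dive_G\grad_G = L_G = I-K^\eps(p)$. The spectrum of $K^\eps(p)$ lies in $[0,1]$, since it is nonnegative \cite{lavenant2025riemannian} and $K^\eps(p)$ is Markov. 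So the spectrum of $L_G$ lies in $[0,1]$, and therefore the eigenvalues of $A$ lie in $[-1,0]$.

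It then follows that $2I + A$ is self-adjoint with spectrum in $[1,2]$. It is therefore invertible, and its inverse has operator norm at most $1$ on $L^2_\wedge(E)$.

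The only points needing care are these. First, $L^2_\wedge(E)$ carries the $W$-weighted inner product, which may be degenerate on pairs with $W_{kl}=0$. Here $W=\pi^\eps(p)$ has all entries strictly positive, so the inner product is a genuine inner product on antisymmetric matrices. Second, the adjoint computations must be done with respect to $\langle\cdot,\cdot\rangle_{L^2(V)}$ with weights $d_k = a_k$. These are exactly the weights for which $K^\eps(p)$ is self-adjoint. I expect neither point to pose a real obstacle.
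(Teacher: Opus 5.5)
Your proposal is correct and follows essentially the same argument as the paper: rewrite $K^\eps(p) = I + \dive_G\grad_G$, and read off the spectrum of $\grad_G\dive_G$ from its shared nonzero spectrum with $\dive_G\grad_G = K^\eps(p) - I$. Your added checks are correct details that the paper leaves implicit: all entries of $\pi^\eps(p)$ are positive, so the edge inner product is nondegenerate, and the degrees satisfy $d_k = a_k$.
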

\begin{proof}
    To see that $K^\eps(p) \dive_G X = \dive_G Y$, recall that $K^\eps(p) = I - L_G = I + \dive_G \grad_G$. 
    The self-adjointness of $\grad_G \dive_G$ is clear from $\grad_G = - \dive_G^*$. Additionally, $\grad_G = - \dive_G^*$ implies that the non-zero eigenvalues of $\grad_G \dive_G$ agree with the non-zero eigenvalues of \[\dive_G \grad_G = - L_G = K^\eps(p) - I.\] As shown in \cite{lavenant2025riemannian}, the eigenvalues of $K^\eps(p)$ lie in $[0, 1]$, so the eigenvalues of $\grad_G \dive_G$ lie in $[-1, 0]$. Therefore $2I + \grad_G \dive_G$ is invertible, the eigenvalues of $(2I + \grad_G \dive_G)^{-1}$ lie in $\left[\frac{1}{2}, 1\right]$ and
    \[ \dive_G X = \dive_G (2I + \grad_G \dive_G)Z = (2I - L_G) \dive_G Z = (I + K^\eps(p))\dive_G Z. \qedhere
    \]
\end{proof}

It remains to prove \cref{prop:bound-s-eps}.

\begin{proof}[Proof of \cref{prop:bound-s-eps}]
Fix any $p \in U$ and let $G$ be the graph with edge weights $W = \pi^\eps(p).$
    By Lemmas \ref{lemma:divergence} and \ref{lemma:mult-by-Keps-div},
    \begin{align*}
        \frac{\eps}{2} (I + K^\eps)^{-1} K^\eps h_i^\eps(p) &= \dive_G (2I + \grad_G \dive_G)^{-1} (I + \grad_G \dive_G) X^i(p) \\
        \frac{\eps}{2} h_j^\eps(p) &= \dive_G X^j(p),
    \end{align*}
    where $X^{i}_{kl}(p) \vc = \left( \iota_k (p) - m_l^\eps(p) \right)\cdot v_{l, i}(p) - \left( \iota_l(p) - m_k^\eps(p) \right) \cdot v_{k, i}(p).$
    Therefore, by \cref{lemma:grad-u-upper-bound},
    \begin{align*}
    \frac{\eps}{2} \left|s^\eps_{ij}\right|  &= \left(\frac{\eps}{2}\right)^2 \left| \left \langle  (I + K^\eps)^{-1} K^\eps h_i^\eps, (I-K^\eps)^\dagger h_j^\eps \right \rangle_a \right| \\
    &\leq \left\| (2I + \grad_G \dive_G)^{-1} (I + \grad_G \dive_G) X^i(p) \right\|_{L^2_\wedge(E)} \left\| X^j(p) \right\|_{L^2_\wedge(E)}.
    \end{align*}
    As noted in \cref{lemma:mult-by-Keps-div}, the eigenvalues of $\grad_G \dive_G$ lie in $[-1, 0]$, so that
    \[ \left\| (2I + \grad_G \dive_G)^{-1} (I + \grad_G \dive_G) X^i(p) \right\|_{L^2_\wedge(E)}\leq \frac{1}{2}\left\| X^i(p) \right\|_{L^2_\wedge(E)}. \]
    We can also bound
    \[ \left\| X^i(p) \right\|^2_{L^2_\wedge(E)} = \sum_{k < l} \pi_{kl}^\eps \left( X_{kl}^i \right)^2 \leq 2\alpha^\eps (BD)^2 \]
    for each $i$, so that altogether
    \[  \frac{\eps}{2} \left|s^\eps_{ij}\right| \leq \frac{1}{2} \left\| X^i(p) \right\|_{L^2_\wedge(E)}\left\| X^j(p) \right\|_{L^2_\wedge(E)} \leq \alpha^\eps (BD)^2. \qedhere
    \]
\end{proof}

\subsection{Comparison with the absolutely continuous setting}
\label{subsec:cont}

Here we highlight where our proof breaks down for measures with density. 
Consider the assumptions for \cref{lemma:decomp-general}: $\{\mu_t\}_{t \in (-\delta, \delta)} \subset \mathcal{P}(\X)$ continuously differentiable in $C^1(\X)^*$ with the weak-* topology, with its derivative at $t = 0$ given by $\dot{\mu} = - \dive(v \mu)$ for $\mu = \mu_0$ and a vector field $v \in L^1(\X, \mu; \R^d).$ 
Analogous to the finitely supported case, define
\begin{align*}
    \widetilde{g}^\eps_\mu(\dot{\mu}, \dot{\mu}) &\vc = \int v(x)^\top v(y) d\pi_{\mu, \mu}^\eps(x, y) \\
    r^\eps_\mu(\dot{\mu}, \dot{\mu}) &\vc = \frac{2}{\eps} \int \left(v(x)^\top \left( y - T^\eps_{\mu}(x)\right)\left(x - T^\eps_{\mu}(y) \right)^\top v(y) \right) d\pi_{\mu, \mu}^\eps(x, y) \\
    s^\eps_\mu(\dot{\mu}, \dot{\mu}) &\vc = \frac{\eps}{2}\langle H^\eps_\mu[\dot{\mu}], K^\eps_\mu (I- (K^\eps_\mu)^2)^{-1} H^\eps_\mu[\dot{\mu}] \rangle_{L^2(\mu)/\R},
\end{align*}
so that \cref{lemma:decomp-general} reads
\[ g_\mu^\eps(\dot{\mu}, \dot{\mu}) = \widetilde{g}^\eps_\mu(\dot{\mu}, \dot{\mu}) +  r^\eps_\mu(\dot{\mu}, \dot{\mu}) + s^\eps_\mu(\dot{\mu}, \dot{\mu}). \]
Recall that we did \emph{not} assume that $\mu$ is finitely supported for \cref{lemma:decomp-general}.

However, in general, we do not expect $r_\mu^\eps(\dot{\mu}, \dot{\mu}) \to 0$ or $s^\eps_\mu(\dot{\mu}, \dot{\mu}) \to 0$ as $\eps \to 0$ when $\mu$ has a density. The most obvious issue is when $\dot{\mu} = -\dive(v\mu) = 0$, but $v$ is continuous and non-zero. Then
\[ \frac{\eps}{2}\|H^\eps_\mu[\dot{\mu}]\|_{\mathcal{H}_\mu^\eps}^2 = \widetilde{g}^\eps_\mu(\dot{\mu}, \dot{\mu}) + r_\mu^\eps(\dot{\mu}, \dot{\mu}) = 0 \]
but whenever $\mu$ has a density (with respect to the Lebesgue measure) and finite entropy,
\[ \lim_{\eps \to 0} \widetilde{g}^\eps_\mu(\dot{\mu}, \dot{\mu}) = \int \|v\|^2 d\mu \]
by the narrow convergence of $\pi_{\mu, \mu}^\eps$ as $\eps \to 0$
\cite{carlier2017convergence}.

Certainly, to avoid this issue, we could take $v$ from the closure of $\{\nabla \varphi \colon \varphi \in C_c^\infty(\R^d)\}$ in $L^2(\mu)$, the tangent space of Wasserstein-2 space. Still, we claim that in general we should not expect $r_\mu^\eps(\dot{\mu}, \dot{\mu}) \to 0$ or $s^\eps_\mu(\dot{\mu}, \dot{\mu}) \to 0$ as $\eps \to 0$.
To further highlight the differences between the density and finite support cases, we recall the informal argument given in \cite{lavenant2025riemannian} for the convergence of $g_\mu^\eps(\dot{\mu}, \dot{\mu}) \to g_\mu^0(\dot{\mu}, \dot{\mu})$ as $\eps \to 0$ when $\mu$ has a smooth enough density. Using a Laplace expansion and the Schr\"odinger equation for the self-transport potential, it is argued in \cite{lavenant2025riemannian} that we should have
\begin{equation}
\label{eq:k-mu-density-case}
\varphi - K_\mu^\eps[\varphi] = \frac{\eps}{4} \left( \Delta \varphi - \nabla \log \mu \cdot \nabla \varphi \right) + o(\eps),
\end{equation}
where we identify $\mu$ with its density. (Beware the differing Laplacian conventions: \cite{lavenant2025riemannian} takes $\Delta$ to be the trace of the Hessian, whereas we use the negative of the trace of the Hessian. The latter agrees with our convention for the graph Laplacians.)
Using (\ref{eq:k-mu-density-case}), \cite{lavenant2025riemannian} then argues that we should have
\[ g^\eps_\mu(\dot{\mu}, \dot{\mu}) = \frac{\eps}{2} \left\langle \dot{\mu}, \left( \mathrm{id} - (K_\mu^\eps)^2\right)^{-1} K_\mu^\eps \left[ \frac{\dot{\mu}}{\mu} \right] \right\rangle = \left\langle \frac{\dot{\mu}}{\mu}, L^{-1} \left[ \frac{\dot{\mu}}{\mu} \right]\right \rangle_{L^2(\mu)} + o(1)  \]
for $L\varphi = \Delta \varphi - \nabla \log \mu \cdot \nabla \varphi$. 
(\ref{eq:k-mu-density-case}) is in contrast to when $\mu$ has finite support, in which case $\left| \varphi - K_\mu^\eps[\varphi] \right|$ can be bounded in terms of the off-diagonal mass, which converges at a rate of $O\left( e^{-c/\eps} \right)$ for some $c>0$ (\cref{prop: OfDiagMass}, or \cite{weed2018explicit}). In fact, this was crucial for the finite support case: we showed that $\left|r^\eps_{ij}\right|, \left|s^\eps_{ij}\right| \lesssim \alpha^\eps/\eps$, which would not imply $r_{ij}^\eps(p), s_{ij}^\eps(p) \to 0$ for each $p \in U$ if we only had $\alpha^\eps(p) = O(\eps).$

As a concrete example, consider when $v$ is constant. By a slight abuse of notation, we also treat $v$ as a vector in $\R^d$. In this case, from \cite[Proposition 5.14]{lavenant2025riemannian}, we have
\[ g^\eps_\mu(\dot{\mu}, \dot{\mu}) = \|v\|^2 \]
for all $\eps > 0$,
which agrees with what one would expect from the Wasserstein-2 Riemannian structure.
Moreover, we also have 
\[\widetilde{g}^\eps_\mu(\dot{\mu}, \dot{\mu}) = \|v\|^2 \]
so that $r^\eps_\mu(\dot{\mu}, \dot{\mu}) = - s^\eps_\mu(\dot{\mu}, \dot{\mu})$ in the constant velocity setting. If $v \neq 0$, under mild assumptions on the density of $\mu$, $\left|r_\mu^\eps(\dot{\mu}, \dot{\mu})\right| = \Omega(1)$ as $\eps \to 0$.

\begin{proposition}
\label{prop:r-eps-cont-setting}
Suppose $\Omega \subset \R^d$ is a bounded open set with $C^1$ boundary, and $\mu\in \mathcal{P}(\overline{\Omega})$ has density $\rho$ (with respect to the Lebesgue measure) satisfying $\rho(x) > 0$ for $x \in \Omega$, $\rho(x) = 0$ for $x \in \partial \Omega$ and $\widetilde{\rho} \vc= \sqrt{\rho} \in C^2(\overline{\Omega})$.
Then for $\dot{\mu} = -\dive(v \mu)$ with any fixed $v \in \R^d\setminus\{0\}$,
\begin{equation}
\label{eq:r-eps-covariance-err}
r^\eps_\mu(\dot{\mu}, \dot{\mu})  = -\frac{2}{\eps} v^\top \left(\int \left( y - T_\mu^\eps(x)\right)\left( y - T_\mu^\eps(x)\right)^\top d\pi_{\mu, \mu}^\eps(x, y) \right) v + O(\eps^{1/2}) 
\end{equation}
\begin{equation}
\label{eq:covar-asymp}
\frac{1}{\eps} v^\top \left(\int \left( y - T_\mu^\eps(x)\right)\left( y - T_\mu^\eps(x)\right)^\top d\pi_{\mu, \mu}^\eps(x, y) \right) v = \Theta(1) 
\end{equation}
as $\eps \to 0$.
\end{proposition}

The rate in (\ref{eq:covar-asymp}) agrees with some intuition from the existing literature, e.g., \cite{lavenant2025riemannian, mordant2024entropic}, where one expects $\exp(f_{\mu, \mu}^\eps/\eps) \sim (\pi \eps)^{-d/4} \mu^{-1/2}$, so that informally $\kk_{\mu}^\eps(x, y) d\mu(x)$ should behave like a Gaussian centered at $x$ with covariance $\frac{\eps}{2} I$.

\begin{proof}
Let us prove the $O(1)$ upper bound for (\ref{eq:covar-asymp}) first.
We use \cite[Lemma 1]{pmlr-v162-pooladian22a}, which states that
\[
\int \left\| \nabla f^\eps_{\mu, \mu} \right\|^2d\mu \leq \frac{\eps^2}{4} I(\mu), 
\]
where $I(\mu) \vc = 4 \int \|\nabla \sqrt{\rho}\|^2 $ is the Fisher information of $\mu$, which is well-defined and finite under our assumptions (see Appendix \ref{app:cramer-rao}). Recalling that $2x - \nabla f_{\mu, \mu}^\eps(x) = 2 T_\mu^\eps(x)$, this implies that
\begin{equation}
\label{eq:diff-with-T-eps}
\int \left\| x- T_\mu^\eps(x) \right\|^2 d\mu(x) \leq \frac{\eps^2}{16} I(\mu) 
\end{equation}
and so by Cauchy-Schwarz
\begin{align*}
   \left| \int \int y \cdot (y-x) k^\eps_\mu(x, y) d\mu(x) d\mu(y) \right |&=\left| \int y \cdot (y - T_\mu^\eps(y)) d\mu(y) \right| \\
    &\leq \left( \int \|y\|^2 d\mu(y) \right)^{1/2} \left( \int \|y- T_\mu^\eps(y)\|^2 d\mu(y) \right)^{1/2} \\
    &= O(\eps).
\end{align*}
Therefore, using a symmetry argument, 
$\int \|y-x\|^2 d\pi_{\mu, \mu}^\eps(x, y) = O(\eps)$ and also
\begin{equation}
\label{eq:y-t-eps-x-bound}
\int \|y- T_\mu^\eps(x)\|^2 d\pi_{\mu, \mu}^\eps(x, y) \leq 2 \left(\int \|y-x\|^2 + \left\| x- T_\mu^\eps(x) \right\|^2 d\pi_{\mu, \mu}^\eps(x, y)  \right) = O(\eps), \end{equation}
which implies the $O(1)$ upper bound in (\ref{eq:covar-asymp}). 
As
\[ r^\eps_\mu(\dot{\mu}, \dot{\mu})  = \frac{2}{\eps} v^\top \left(\int  \left( y - T^\eps_{\mu}(x)\right)\left(x - T^\eps_{\mu}(y) \right)^\top d\pi_{\mu, \mu}^\eps(x, y)\right)v \]
under the constant velocity assumption, (\ref{eq:r-eps-covariance-err}) follows from the previous bounds and another application of Cauchy-Schwarz:
\begin{align*}
    \bigg| r_\mu^\eps(\dot{\mu}, \dot{\mu} ) &+ \frac{2}{\eps} v^\top \left(\int \left( y - T_\mu^\eps(x)\right)\left( y - T_\mu^\eps(x)\right)^\top d\pi_{\mu, \mu}^\eps(x, y) \right) v \bigg| \\
    &= \frac{2}{\eps} \left| v^\top \left(\int \left( y - T_\mu^\eps(x)\right)\left( y - T_\mu^\eps(y) + x- T_\mu^\eps(x)\right)^\top d\pi_{\mu, \mu}^\eps(x, y) \right) v  \right| \\
    &\leq \frac{4}{\eps} \left( \int \left| v^\top  \left( y - T_\mu^\eps(x)\right)\right|^2 d\pi_{\mu, \mu}^\eps(x, y) \right)^{1/2} \left( \int \left| \left( x - T_\mu^\eps(x)\right)^\top v  \right|^2 d\pi_{\mu, \mu}^\eps(x, y)\right)^{1/2}
\end{align*}
which is $O(\eps^{1/2})$ by (\ref{eq:diff-with-T-eps}) and (\ref{eq:y-t-eps-x-bound}).

It remains to prove the lower bound of $\Omega(1)$ for (\ref{eq:covar-asymp}). This is inspired by \cite{chewi2023entropic}, where they prove Caffarelli's contraction theorem by using the Hessian of the potentials and covariance inequalities and then taking $\eps\to 0$. Let us define $W(x) \vc= -\log \rho(x) = - 2 \log \widetilde{\rho}(x) $ for $x \in \Omega$. One of the observations in \cite{chewi2023entropic} (and earlier in \cite[Appendix~B.2]{daniels2021score}), specialized to the self-transport setting, is that for $\varphi^\eps(x) \vc = \|x\|^2 - f^\eps_{\mu, \mu}(x)$,
\begin{equation}
\label{eq:hess-potential}
\nabla^2 \varphi^\eps(x) = \frac{4}{\eps} \int (y - T_\mu^\eps(x)) (y- T_\mu^\eps(x))^\top \kk^\eps_\mu(x, y) d\mu(y) = \frac{4}{\eps} \mathrm{Cov}_{Y \sim \pi_x^\eps}(Y) 
\end{equation}
\begin{equation}
-\nabla^2 \log \pi_x^\eps (y) = - \left( \frac{\nabla^2 f_{\mu, \mu}^\eps(y) - 2I }{\eps} \right) + \nabla^2 W(y) = \frac{1}{\eps} \nabla^2 \varphi^\eps(y) + \nabla^2 W(y) 
\end{equation}
for $x, y \in \Omega$,
where the measure $\pi_x^\eps$ is given by $d\pi_x^\eps(y) = \kk_\mu^\eps(x, y) d\mu(y).$ (The factor of $4$ arises from our use of the cost $x, y \mapsto \|x-y\|^2$, compared to the convention in \cite{chewi2023entropic} of $x, y \mapsto \frac{1}{2}\|x-y\|^2$.)
As in \cite{chewi2023entropic}, using the Cram\'er-Rao inequality, we bound
\begin{align*}
\frac{1}{\eps} \mathrm{Cov}_{Y \sim \pi_x^\eps}(Y) &\succeq \frac{1}{\eps}\left( \E_{Y \sim \pi_x^\eps} \left[ \frac{1}{\eps} \nabla^2 \varphi^\eps(Y) + \nabla^2 W(Y) \right] \right)^{-1} \\
&= \left( \E_{Y \sim \pi_x^\eps} \left[\nabla^2 \varphi^\eps(Y)  + \eps \nabla^2 W(Y)  \right] \right)^{-1}.
\end{align*}
See Appendix \ref{app:cramer-rao} for a more detailed justification.
Since \cite{chewi2023entropic} would like bounds on $\nabla^2\varphi^\eps(x)$ for all $x$, they then use a uniform upper bound on $\nabla^2 W$ and the Brascamp--Lieb inequality (which requires a strictly log-concave measure) to bound the last line. However, as $W(x)$ goes to infinity as $x$ approaches $\partial \Omega$, we do not have $\beta > 0$ such that $\nabla^2 W(x) \preceq \beta I$ for all $x \in \Omega$.
Instead, as we are only interested in
\[  \int \left( y - T_\mu^\eps(x)\right)\left( y - T_\mu^\eps(x)\right)^\top d\pi_{\mu, \mu}^\eps(x, y) = \E_{X \sim \mu}\left[\mathrm{Cov}_{Y \sim \pi_X^\eps}(Y) \right], \]
we can use the convexity of the matrix inverse on positive definite matrices to bound
\begin{align*}
\frac{1}{\eps} \E_{X \sim \mu} \left[w^\top \mathrm{Cov}_{Y \sim \pi_X^\eps}(Y) w\right] &\geq \E_{X \sim \mu} \left[ w^\top\left(\E_{Y \sim \pi_X^\eps} \left[\nabla^2 \varphi^\eps(Y)  + \eps  \nabla^2 W(Y)  \right]\right)^{-1}w\right] \\ 
&\geq   w^\top \left(\E_{X \sim \mu} \left[\E_{Y \sim \pi_X^\eps} \left[\nabla^2 \varphi^\eps(Y)  + \eps  \nabla^2 W(Y)  \right]\right]\right)^{-1} w \\
&= w^\top \left(\E_{ Y \sim \mu} \left[ \nabla^2 \varphi^\eps(Y)  + \eps  \nabla^2 W(Y)  \right]\right)^{-1}w
\end{align*}
for any $w \in \R^d$, where we use Jensen's inequality in the second line and Fubini's theorem in the last line. One can check that each expectation (integral) above is well-defined, and Fubini's theorem can be applied in the last line: by the integrability of $\nabla^2 W$ with respect to $\mu$ (\cref{lemma:cramer-rao}), $x, y \mapsto \left( \nabla^2 \varphi^\eps(y) + \eps \nabla^2 W(y) \right) k_\mu^\eps(x, y)$ is integrable with respect to $\mu \otimes \mu$, and
\[ x \mapsto \E_{Y \sim \pi_x^\eps} \left[\nabla^2 \varphi^\eps(Y)  + \eps  \nabla^2 W(Y)  \right] = \int \left( \nabla^2 \varphi^\eps(y) + \eps \nabla^2 W(y) \right) k_\mu^\eps(x, y) d\mu(y) \]
is continuous on $\Omega$ and also integrable with respect to $\mu$.
By (\ref{eq:hess-potential}),
\[ \E_{X \sim \mu} \left[ \nabla^2 \varphi^\eps(X) \right] = \frac{4}{\eps}\E_{X \sim \mu} \left[ \mathrm{Cov}_{Y \sim \pi_X^\eps}(Y) \right]. \]
Moreover, under our assumptions on $\mu$,
$\beta \vc= \|\E_{\mu}\left[  \nabla^2 W \right]\|_{op}$
is finite.
Therefore 
\[ \frac{1}{\eps} \E_{X \sim \mu} \left[ \mathrm{Cov}_{Y \sim \pi_X^\eps}(Y) \right] \succeq \left(\frac{4}{\eps} \E_{X \sim \mu} \left[ \mathrm{Cov}_{Y \sim \pi_X^\eps}(Y) \right] + \eps  \beta  \right)^{-1}, \]
and any eigenvalue $\lambda$ of $\frac{1}{\eps} \E_{X \sim \mu} \left[\mathrm{Cov}_{Y \sim \pi_X^\eps}(Y)\right]$ satisfies
\[ \lambda (4\lambda + \eps \beta) \geq 1, \]
which gives us
\[ \lambda \geq \sqrt{\frac{1}{4} + \left( \frac{\eps \beta}{8} \right)^2} - \frac{\eps \beta}{8} \]
since $\lambda$ must be non-negative.
As a result,
\[ \frac{1}{\eps} v^\top \E_{X \sim \mu} \left[\mathrm{Cov}_{Y \sim \pi_X^\eps}(Y)\right] v = \Omega(1) \]
as $\eps \to 0$, proving (\ref{eq:covar-asymp}).
\end{proof}

\section{Convergence of higher order derivatives}\
\label{sec:derivs}
A similar proof strategy allows us to establish the convergence of the derivatives of $g^{\eps}$ as $\eps \to 0$.
\begin{theorem}
\label{thm:derivs-of-g}
Fix any $\eps_0 > 0$ and $n \in \Z_{> 0}$. Then there exists $C_{n+1} \colon U \to \R_{> 0}$, which can be written as a continuous function of $\iota$ and its partial derivatives of order $\leq n+1$, such that
    \[ \left| \frac{\partial^n g_{ij}^\eps }{\partial p_{i_1} \cdots \partial p_{i_n}} - \frac{\partial^n g_{ij}^0 }{\partial p_{i_1} \cdots \partial p_{i_n}}\right| \leq \frac{C_{n+1} \alpha^\eps}{\eps^{n+1}} \]
    for all $\eps \in (0, \eps_0)$ and $i_1, \ldots, i_n \in [m]$. Therefore,
    \begin{equation}
    \label{eq:derivs-conv}
    \lim_{\eps \to 0} \frac{\partial^n g_{ij}^\eps }{\partial p_{i_1} \cdots \partial p_{i_n}}(p) = \frac{\partial^n g_{ij}^0 }{\partial p_{i_1} \cdots \partial p_{i_n}}(p) 
    \end{equation}
    for all $p \in U$.
\end{theorem}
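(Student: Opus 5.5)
We differentiate the decomposition of \cref{lemma:decomp}, $g^\eps_{ij} = \widetilde g^\eps_{ij} + r^\eps_{ij} + s^\eps_{ij}$, term by term. Since $g^0_{ij} = \langle v_{\cdot,i}, v_{\cdot,j}\rangle_a$, it suffices to show three bounds for every multi-index of order $n$: $\partial^n(\widetilde g^\eps_{ij} - g^0_{ij})$, $\partial^n r^\eps_{ij}$ and $\partial^n s^\eps_{ij}$ are each $O(\alpha^\eps/\eps^{n+1})$. The constants must be continuous in the derivatives of $\iota$ up to order $n+1$, uniformly for $\eps \in (0,\eps_0)$. The basic input is a derivative estimate for the self-transport quantities. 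We will prove by induction on $n$ that
\[ |\partial^n \pi^\eps_{kl}| \lesssim \pi^\eps_{kl}\,\eps^{-n}\ (k\ne l), \qquad |\partial^n \pi^\eps_{kk}| \lesssim \alpha^\eps \eps^{-n}, \]
together with the analogous bounds for $K^\eps$, $\ff$ and $m^\eps - \iota$. The main claim here is that every derivative of an off-diagonal entry, or of $I-K^\eps$, or of $\iota_k - m^\eps_k$, carries a factor $\alpha^\eps$ (or $\pi^\eps_{kl}$), and each differentiation costs at most one factor $\eps^{-1}$.

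To get these bounds, we differentiate the Schr\"odinger system (\ref{eq:schrodinger-1}) at the atoms. This reads $\sum_l a_l \exp((\ff_k + \ff_l - \|\iota_k-\iota_l\|^2)/\eps) = 1$. Writing $u_k = e^{\ff_k/\eps}$, the equation becomes $u_k \sum_l a_l u_l e^{-\|\iota_k-\iota_l\|^2/\eps} = 1$, so the diagonal part gives $a_k u_k^2 = 1 - (\text{off-diagonal terms})$. Implicitly differentiating, the linearized operator is $\tfrac1\eps(D + \pi^\eps)$ in the $\ff$ variables. This operator is close to $\tfrac{2}{\eps}\mathrm{diag}(a)$, with a perturbation of size $\alpha^\eps/\eps$. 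It is therefore invertible with inverse of size $O(\eps)$, uniformly for small $\eps$. The forcing term comes from differentiating $\|\iota_k-\iota_l\|^2/\eps$, and it appears only multiplied by off-diagonal $\pi^\eps_{kl}$. Hence $\partial \ff = O(\alpha^\eps)$ and $\partial \log \pi^\eps_{kl} = O(\eps^{-1})$ for $k \ne l$. Induction then follows via Fa\`a di Bruno: higher derivatives of $\log\pi^\eps_{kl}$ are sums of products in which each $\partial$ contributes at most $\eps^{-1}$. The $\iota$-derivatives up to order $n+1$ enter because $r^\eps$ and $X^i$ already involve $v = \partial\iota$.

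With these estimates, $\widetilde g^\eps$ and $r^\eps$ are handled by the Leibniz rule, exactly as in (\ref{eq:tilde-g-bound}) and (\ref{eq:r-bound}). Each product term contains either an off-diagonal $\pi^\eps$, or a diagonal $\pi^\eps_{kk}$ multiplied by two factors $\iota - m^\eps$ that are small. Both cases give $O(\alpha^\eps)$, times powers of $\eps^{-1}$: at most $\eps^{-n}$ from differentiation, plus the explicit $\eps^{-1}$ in $r^\eps$.

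The main obstacle is $s^\eps$, because of the pseudo-inverse. Here we use the rearrangement (\ref{eq:s-eps-rearr}) and the graph representations from \cref{lemma:divergence} and \cref{lemma:mult-by-Keps-div}. They give
\[ s^\eps_{ij} = \tfrac{2}{\eps}\bigl\langle \dive_G Z^i, (I-K^\eps)^\dagger \dive_G X^j\bigr\rangle_a, \qquad Z^i = (2I+\grad_G\dive_G)^{-1}(I+\grad_G\dive_G)X^i. \]
Write $u^j = (I-K^\eps)^\dagger \dive_G X^j$. Then $s^\eps_{ij} = -\tfrac2\eps \langle Z^i, \grad_G u^j\rangle_{L^2_\wedge(E)}$, where $u^j$ solves $L_G u^j = \dive_G X^j$ with mean zero. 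Rather than differentiating $\dagger$ directly, we differentiate this Poisson equation. The derivative $\partial u^j$ solves $L_G\, \partial u^j = \partial(\dive_G X^j) - (\partial L_G) u^j$ plus a correction for the mean-zero normalization. The key step is to rewrite the right-hand side as $\dive_G Y$ for an explicit edge flow $Y$ with $\|Y\|_{L^2_\wedge(E)} \lesssim \sqrt{\alpha^\eps}\,\eps^{-1}$. We can do this because $\partial L_G = -\partial(D^{-1}W)$ is itself a divergence of the flow $(\partial W_{kl})(u_l - u_k)/W_{kl}$, and by the previous estimates $|\partial W_{kl}|/W_{kl} = O(\eps^{-1})$. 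Then \cref{lemma:grad-u-upper-bound} gives $\|\grad_G \partial u^j\| \lesssim \sqrt{\alpha^\eps}/\eps$, with no spectral-gap dependence; this is what circumvents \cref{rmk:spec-gap}. The same bookkeeping applies to $Z^i$, since $(2I+\grad_G\dive_G)^{-1}$ has norm at most $1$ uniformly and its derivative is $-(2I+\cdot)^{-1}\partial(\grad_G\dive_G)(2I+\cdot)^{-1}$. Iterating gives the edge-flow norms of all $n$-th derivatives as $O(\sqrt{\alpha^\eps}\,\eps^{-n})$ in total. Combined with Cauchy--Schwarz and the prefactor $2/\eps$, this yields the bound $C_{n+1}\alpha^\eps/\eps^{n+1}$.

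Convergence (\ref{eq:derivs-conv}) then follows from \cref{prop: OfDiagMass}. The care needed is in checking that the weighted inner products, whose weights $\pi^\eps$ themselves vary with $p$, are differentiated consistently. The plan is to write everything as unweighted sums $\sum_{k,l}$ before applying Leibniz.
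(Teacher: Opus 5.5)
Your proposal is correct and follows essentially the paper's route. It uses the same decomposition and the same derivative structure $\partial^n\pi^\eps_{kl} = \eps^{-n}\pi^\eps_{kl}\psi^\eps_{kl,\ldots}$, obtained by differentiating the Schr\"odinger system and applying Fa\`a di Bruno, together with the same key device of writing $(\partial^q K^\eps)w$ as $\dive_G$ of the flow $\eps^{-q}\psi^\eps\,\grad_G w$, so that \cref{lemma:grad-u-upper-bound} replaces any spectral-gap bound; your differentiated Poisson equation and edge-flow pairing just repackage the paper's inverse-derivative formula and \cref{cor:p-j-divergence}. One thing to tighten: take the invertibility of $\mathrm{diag}(a)+\pi^\eps$ from $\pi^\eps \succeq 0$ (equivalently, the spectrum of $K^\eps$ lying in $[0,1]$) rather than from a small-$\alpha^\eps$ perturbation argument, so that the constants are uniform on all of $(0,\eps_0)$ and not just for small $\eps$.
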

\textbf{For the rest of this section, $\eps_0 > 0$ will be fixed, and $C_n \colon U \to \R_{> 0}$ will always denote a function which can be written as a continuous function of $\iota$ and its partial derivatives of order $\leq n$, though the precise function may change from line to line.} The functions $C_n$ may depend on the choice of $\eps_0$. In particular, the convergence in (\ref{eq:derivs-conv}) can be made uniform over compact sets of $U$.

\begin{remark}
Since the Christoffel symbols and Riemann curvature tensor for a Riemannian metric $g$ can be expressed in terms of the derivatives of $g$ and its inverse, \cref{thm:derivs-of-g} immediately implies the pointwise convergence of the Christoffel symbols and Riemann curvature tensor for $g^{\eps}$ as $\eps \to 0$, which can be made uniform over compact subsets of $U$.
\end{remark}

To prove \cref{thm:derivs-of-g}, recall that the main difficulty in \cref{sec:conv} was bounding $s^\eps$.
Similarly, bounding the derivatives of $s^\eps$ with respect to $p$ will be the crux of proving \cref{thm:derivs-of-g}.
To take derivatives of (\ref{eq:s-eps-rearr}) with respect to $p$, recall that if $A \colon (-\delta, \delta) \to \R^{N \times N}$ is smooth and $A(t)$ is invertible for all $t \in (-\delta, \delta)$, then
\[ 
\frac{dA^{-1}}{dt} = - A^{-1} \frac{dA}{dt} A^{-1}. \]
With the product rule, 
we can then obtain formulae for the partial derivatives of $A \colon U \to \R^{N \times N}$ if $A$ is smooth and $A(p)$ is invertible for all $p \in U$: for any $i_1, \ldots, i_n \in [m]$,
\begin{equation}
    \label{eq:partial-derivs-of-inverse}
    \frac{\partial^n A^{-1}}{\partial p_{i_1} \cdots \partial p_{i_n}} = \sum_{D = 1}^n \sum_{\substack{\text{ordered} \\ \text{partitions } J \\ \text{of } \{i_1, \ldots, i_n\}, \\ |J| = D}} (-1)^D A^{-1} \prod_{d = 1}^D \left(\frac{\partial^{|J_d|} A}{\partial p_{J_d}} A^{-1} \right),
\end{equation}
By ordered partition, we mean that $(\{i_1, i_3\}, \{i_2\})$ is a different partition than $( \{i_2\}, \{i_1, i_3\})$, though $(\{i_1, i_3\}, \{i_2\})$ and $(\{i_3, i_1\}, \{i_2\})$ are the same partition. 

In addition to applying (\ref{eq:partial-derivs-of-inverse}) to $A = I + K^\eps$,
we can also apply it to $A = I - K^\eps$ restricted to the orthogonal complement $\ind^\perp$ of $\mathrm{span}(\{\ind\})$ with respect to $\langle \cdot, \cdot \rangle_a$. Indeed, for all $p \in U$, $I - K^\eps(p)$ is invertible when restricted to $\ind^\perp$. To be more concrete, one could fix an orthonormal basis $\{e_1, \ldots, e_{N- 1}\}$ of $\ind^\perp$ and take $A \colon U \to \R^{(N -1) \times (N-1)}$ to be $I - K^\eps$ with respect to this basis.

The main observation is that the order of the factors in
\begin{equation}
\label{def:P_J}
P_J \vc = \prod_{d = 1}^{|J|} \left(\frac{\partial^{|J_d|} A}{\partial p_{J_d}} A^{-1}\right)
\end{equation}
alternates between $A^{-1}$ and partial derivatives of $A$. If $A = I - K^\eps$ (restricted to $\ind^\perp$) or $A = I + K^\eps$, and $G$ is the graph with edge weights $\pi^\eps(p)$ for some $p \in U$, then for any edge flow $X$, this alternating pattern will allow us to write $P_J(p) \dive_G X$ as the divergence of another edge flow.

To prove this, we will need the following proposition on the derivatives of $\pi^\eps$ and $K^\eps$. We recall that for $\eps > 0$, $\ff$ and hence $\pi^\eps$ are smooth by an implicit function theorem argument, similar to \cite{lavenant2025riemannian, luise2018differential, carlier2024displacement, xu2026convergence}.

\begin{proposition}[Derivatives of $\pi^\eps$ and $K^\eps$ with respect to $p$]
\label{prop:pi-derivs}
    The partial derivatives of $\pi^\eps$ with respect to $p$ can be written in the form
    \begin{equation}
    \label{eq:pi-derivs}
        \frac{\partial^n \pi_{kl}^\eps}{\partial p_{i_1} \cdots \partial p_{i_n}} = \frac{1}{\eps^n} \pi_{kl}^\eps \psi^\eps_{kl, i_1, \ldots, i_n}
    \end{equation}
    for some smooth $\psi^\eps_{kl, i_1, \ldots, i_n} \colon U \to \R$ with $\psi^\eps_{kl, i_1, \ldots, i_n} = \psi^\eps_{lk, i_1, \ldots, i_n}$. Moreover, there exists  a function $C_n \colon U \to \R_{> 0}$, which can be written as a continuous function of $\iota$ and its partial derivatives of order $\leq n$, such that
    \begin{equation}
    \label{eq:psi-bound}
        \left|\psi^\eps_{kl, i_1, \ldots, i_n}\right| \leq C_n
    \end{equation}
    for all $\eps \in (0, \eps_0)$.
    Since $K^\eps_{kl} = \frac{1}{a_k} \pi^\eps_{kl}$, we also have
    \begin{equation}
    \label{eq:k-derivs}
    \frac{\partial^n K_{kl}^\eps}{\partial p_{i_1} \cdots \partial p_{i_n}} = \frac{1}{\eps^n} K_{kl}^\eps \psi^\eps_{kl, i_1, \ldots, i_n}.
    \end{equation}

\end{proposition}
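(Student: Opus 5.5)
The plan is to write every entry of $\pi^\eps$ as an exponential and apply Fa\`a di Bruno's formula. The only genuine work is then to control the derivatives of $\ff$ uniformly in $\eps \in (0, \eps_0)$. Set
\[ \Phi_{kl} \vc = \ff_k + \ff_l - \|\iota_k - \iota_l\|^2, \qquad \pi^\eps_{kl} = a_k a_l \, e^{\Phi_{kl}/\eps}. \]
Fa\`a di Bruno's formula for the exponential gives
\[ \frac{\partial^n \pi^\eps_{kl}}{\partial p_{i_1}\cdots\partial p_{i_n}} = \pi^\eps_{kl} \sum_{J} \eps^{-|J|} \prod_{d=1}^{|J|} \frac{\partial^{|J_d|}\Phi_{kl}}{\partial p_{J_d}}, \]
where the sum runs over (unordered) set partitions $J$ of $\{i_1,\ldots,i_n\}$. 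This has the form (\ref{eq:pi-derivs}) with
\[ \psi^\eps_{kl,i_1,\ldots,i_n} \vc = \sum_J \eps^{n-|J|}\prod_d \partial_{J_d}\Phi_{kl}. \]
This $\psi^\eps$ is smooth, and it is symmetric in $(k,l)$ because $\Phi_{kl} = \Phi_{lk}$. Relation (\ref{eq:k-derivs}) is then immediate from $K^\eps_{kl} = \pi^\eps_{kl}/a_k$.

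For (\ref{eq:psi-bound}), I will prove by induction on $j$ the scaling hypothesis
\[ \Big|\partial_{J}\Phi_{kl}\Big| \leq C_j\, \eps^{1-j} \quad \text{for } |J| = j \text{ and all } \eps\in(0,\eps_0). \]
This hypothesis suffices: if all partial derivatives of orders $<n$ and of order $n$ obey it, then each term of $\psi^\eps$ is bounded by $\eps^{n-|J|}\prod_d C\eps^{1-|J_d|} = C\,\eps^{\,n-|J|+|J|-n} = C$. The derivatives of $\|\iota_k-\iota_l\|^2$ are bounded by continuous functions of $\iota$ and its derivatives. Since $\eps<\eps_0$, any positive power of $\eps$ can be absorbed into the constant. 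So it suffices to bound $\partial^j \ff$ by $C_j\eps^{1-j}$.

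\textbf{Base case $n=1$.} Differentiate the marginal constraint $\sum_l \pi^\eps_{kl} = a_k$, which follows from (\ref{eq:schrodinger-1}). This gives $\sum_l K^\eps_{kl}\,\partial_i\Phi_{kl} = 0$, that is,
\[ (I+K^\eps)\,\partial_i\ff = \Big[\sum_l K^\eps_{kl}\,\partial_i\|\iota_k-\iota_l\|^2\Big]_k. \]
The matrix $K^\eps$ is self-adjoint with respect to $\langle\cdot,\cdot\rangle_a$ with spectrum in $[0,1]$ \cite{lavenant2025riemannian}. Hence $\|(I+K^\eps)^{-1}\|_{op}\leq 1$ in the $a$-norm. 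Since $K^\eps$ is Markov, the right-hand side is bounded by $4BD$. Converting the $a$-norm to the sup norm costs only a constant depending on $a$, so $|\partial_i\ff|\leq C_1$.

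\textbf{Inductive step.} Differentiate the constraint $n$ times. This yields $\sum_l K^\eps_{kl}\psi^\eps_{kl,i_1,\ldots,i_n}=0$. Isolate the single-block partition $|J|=1$, which contributes $\eps^{n-1}\partial^n\Phi_{kl}$. Every other term involves only derivatives of order $<n$, so by the induction hypothesis it is $O(1)$. Therefore
\[ \eps^{n-1}(I+K^\eps)\,\partial^n\ff = \eps^{n-1}\Big[\sum_l K^\eps_{kl}\,\partial^n\|\iota_k-\iota_l\|^2\Big]_k + O(1), \]
and inverting $I+K^\eps$ as in the base case gives $|\partial^n\ff|\leq C_n\eps^{1-n}$. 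Throughout, the constants are produced by finitely many sums and products of sup-norms of derivatives of $\iota$ up to order $n$, together with $B$, $D$ and $\eps_0$. This gives the claimed continuity of $C_n$ in $\iota$ and its derivatives.

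The main obstacle is recognizing the right inductive hypothesis. Naively, $\partial^n\ff$ is not bounded uniformly in $\eps$, and the top-order Fa\`a di Bruno terms carry $\eps^{-n}$. The argument only closes because the blow-up rate $\eps^{1-j}$ of $\partial^j\Phi$ exactly compensates the $\eps^{n-|J|}$ weights in $\psi^\eps$. Checking this bookkeeping carefully in the inductive step, where the $O(1)$ remainder must be verified term by term, is where I expect the effort to go.
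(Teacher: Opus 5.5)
Your proof is correct. It shares the paper's skeleton: the same Fa\`a di Bruno expansion of $\exp(\Phi_{kl}/\eps)$ (the paper's $F^\eps_{kl}$), the same $\psi^\eps$, and the same base case $(I+K^\eps)\,\partial_i \ff = [\sum_l K^\eps_{kl}\,\partial_i\|\iota_k-\iota_l\|^2]_k$ with $\|(I+K^\eps)^{-1}\|\le 1$, which is \cref{lemma:ff-derivs}.

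Where you differ is the inductive step. The paper differentiates \eqref{eq:first-partial-ff} repeatedly and asserts that every derivative of $\ff$ is bounded uniformly in $\eps\in(0,\eps_0)$. Each derivative of $K^\eps$ brings a factor $\eps^{-1}$, so that uniform bound implicitly relies on two facts:
\begin{itemize}
\item only off-diagonal entries of $K^\eps$ survive, because the right-hand side of \eqref{eq:first-partial-ff} vanishes on the diagonal and derivatives of $K^\eps$ have zero row sums;
\item those entries are $O(e^{-c/\eps})$, so $\eps^{-s}\alpha^\eps$ stays bounded.
\end{itemize}
You instead differentiate the marginal constraint $n$ times and isolate the single-block term through $\sum_l K^\eps_{kl}\psi^\eps_{kl,i_1,\ldots,i_n}=0$. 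You carry only the weaker hypothesis $\partial^j\ff=O(\eps^{1-j})$, which closes because $\eps^{n-|J|}\prod_d\eps^{1-|J_d|}=1$.

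Your version uses nothing beyond the Markov property and the spectral bound on $K^\eps$. It never invokes the separation $c(p)>0$ or the smallness of $\alpha^\eps$, so your constants do not degenerate as atoms approach one another. It also makes explicit the exact cancellation that keeps $\psi^\eps$ bounded. The paper's route yields the stronger intermediate fact that derivatives of $\ff$ are uniformly bounded (in fact exponentially small), but the proposition itself does not need that.
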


Before proving \cref{prop:pi-derivs}, we discuss several corollaries and their implications for proving \cref{thm:derivs-of-g}.

\begin{corollary}
\label{cor:deriv-of-graph-div}
    Suppose that $X \colon U \to \R^{N \times N}$ is smooth with $X_{kl} = - X_{lk}$, and fix any $\eps > 0$. Consider
    $\eta_k \vc = \sum_l K^\eps_{kl}X_{kl}$ and any $i_1, \ldots, i_n \in [m]$.  Then
    \[ \frac{\partial^n \eta_k}{\partial p_{i_1} \cdots \partial p_{i_n}} = \sum_l \sum_{S \subseteq [n]} \frac{\partial^{|S|} K_{kl}^\eps}{\partial p_{i_S}} \frac{\partial^{|[n] \setminus S|}X_{kl}}{\partial p_{i_{[n] \setminus S}}} =  \sum_l K_{kl}^\eps \sum_{S \subseteq [n]} \eps^{ - |S|}\psi^\eps_{kl, i_S} \frac{\partial^{|[n] \setminus S|}X_{kl}}{\partial p_{i_{[n] \setminus S}}}. \]
    In other words, if $\eta_k(p) = (\dive_G X(p))_k$ for each $p \in U$ and graph $G$ with edge weights $W = \pi^\eps(p)$, then for each $p \in U$
    \[ \frac{\partial^n \eta_k}{\partial p_{i_1} \cdots \partial p_{i_n}}(p) =  (\dive_G Z(p))_k, \]
    where again $G$ is the graph with edge weights $W = \pi^\eps(p)$ and $Z$ is given by
    \[Z_{kl}(p) = \sum_{S \subseteq [n]} \eps^{ - |S|}\psi^\eps_{kl, i_S}(p) \frac{\partial^{|[n] \setminus S|}X_{kl}}{\partial p_{i_{[n] \setminus S}}}(p).\]
\end{corollary}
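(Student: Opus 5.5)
The proof is a direct computation combining the general Leibniz rule with \cref{prop:pi-derivs}, followed by the observation that the resulting array inherits antisymmetry.

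\textbf{Step 1 (Leibniz rule).} I will differentiate $\eta_k = \sum_l K^\eps_{kl} X_{kl}$ entrywise. Since the sum over $l$ is finite, the multivariate Leibniz rule for the product of the two smooth functions $K^\eps_{kl}$ and $X_{kl}$ gives
\[ \frac{\partial^n \eta_k}{\partial p_{i_1}\cdots\partial p_{i_n}} = \sum_l \sum_{S\subseteq[n]} \frac{\partial^{|S|}K^\eps_{kl}}{\partial p_{i_S}}\,\frac{\partial^{|[n]\setminus S|}X_{kl}}{\partial p_{i_{[n]\setminus S}}}. \]
Here each index in $\{i_1,\dots,i_n\}$ is assigned either to the $K$ factor or to the $X$ factor. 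Smoothness of $K^\eps$ comes from the implicit function theorem argument cited before \cref{prop:pi-derivs}.

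\textbf{Step 2 (substitute \cref{prop:pi-derivs}).} By \eqref{eq:k-derivs}, $\partial^{|S|}K^\eps_{kl}/\partial p_{i_S} = \eps^{-|S|}K^\eps_{kl}\psi^\eps_{kl,i_S}$. For $S=\emptyset$ I use the convention $\psi^\eps_{kl,\emptyset}=1$. Substituting and factoring out $K^\eps_{kl}$ yields the second displayed equality in the statement.

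\textbf{Step 3 (identify the result as a graph divergence).} Define $Z_{kl}$ as in the statement. I must check that $Z$ is an edge flow, i.e.\ $Z_{kl} = -Z_{lk}$. This holds because of two facts:
\begin{itemize}
\item $\psi^\eps_{kl,i_S} = \psi^\eps_{lk,i_S}$ by \cref{prop:pi-derivs};
\item every partial derivative of $X_{kl}$ is antisymmetric in $(k,l)$, since $X_{kl}=-X_{lk}$ holds identically on $U$.
\end{itemize}
Hence each summand of $Z_{kl}$ is antisymmetric, and so is $Z$.

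Finally, with $W=\pi^\eps(p)$ the degrees are $d_k = \sum_l \pi^\eps_{kl} = a_k$, so
\[ (\dive_G Z)_k = \frac{1}{a_k}\sum_l \pi^\eps_{kl} Z_{kl} = \sum_l K^\eps_{kl} Z_{kl}, \]
which matches the expression from Step 2. The same identity $(\dive_G X)_k = \sum_l K^\eps_{kl}X_{kl}$ justifies the ``in other words'' reformulation.

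There is no real obstacle here. The only points needing care are the bookkeeping of the convention for $S=\emptyset$ and the verification of antisymmetry, which is exactly why the symmetry $\psi^\eps_{kl}=\psi^\eps_{lk}$ is included in \cref{prop:pi-derivs}.
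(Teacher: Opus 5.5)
Your proposal is correct and follows the paper's argument: the paper says only that the corollary ``follows immediately from'' \eqref{eq:k-derivs}, which is exactly your Leibniz-rule-plus-substitution computation. Your explicit checks fill in details the paper leaves implicit, namely the convention $\psi^\eps_{kl,\emptyset}=1$, the antisymmetry of $Z$ via $\psi^\eps_{kl,i_S}=\psi^\eps_{lk,i_S}$, and the identity $d_k=a_k$ that turns $\sum_l K^\eps_{kl}Z_{kl}$ into $(\dive_G Z)_k$.
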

\cref{cor:deriv-of-graph-div} follows immediately from (\ref{eq:k-derivs}) and allows us handle the derivatives of $h_i^\eps$.
\cref{prop:pi-derivs} can also be used to write the derivatives of $(I + K^\eps)^{-1} h_i^\eps$ and $(I - K^\eps)^\dagger h_i^\eps$ as the divergence of an edge flow.

\begin{corollary}
\label{cor:K-eps-deriv-divergence}
Fix any $\eps > 0$, $p \in U$ and $i_1, \ldots, i_n \in [m]$. Consider the graph $G$ with $N$ vertices and edge weights $W = \pi^\eps(p)$. Then for any $w  \in \R^N$, 
\[ \bigg[\frac{\partial^n K^\eps}{\partial p_{i_1} \cdots \partial p_{i_n}}(p)\bigg] w = \dive_G Z, \]
where the edge flow $Z$ is given by \[Z_{kl} \vc = \frac{1}{\eps^n} \psi^\eps_{kl, i_1, \ldots, i_n}(p) (\grad_G w)_{kl}.\]
\end{corollary}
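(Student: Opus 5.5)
The plan is a direct entrywise computation using \cref{prop:pi-derivs}. Write $\partial^n K^\eps$ for $\frac{\partial^n K^\eps}{\partial p_{i_1} \cdots \partial p_{i_n}}(p)$ and $\psi_{kl}$ for $\psi^\eps_{kl, i_1, \ldots, i_n}(p)$. The key fact to exploit is that every row of $K^\eps$ sums to one for all $p$, because $K^\eps(p)$ is a Markov transition matrix by (\ref{eq:schrodinger-1}). Differentiating $\sum_l K^\eps_{kl} = 1$ $n \geq 1$ times gives $\sum_l (\partial^n K^\eps)_{kl} = 0$ for every $k$. By (\ref{eq:k-derivs}) this reads
\[ \sum_l K^\eps_{kl}\,\psi_{kl} = 0 \quad \text{for each } k. \]

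Next, expand the $k$th entry of $[\partial^n K^\eps] w$. Using (\ref{eq:k-derivs}) and subtracting $w_k$ times the vanishing row sum, I would write
\[ \big([\partial^n K^\eps] w\big)_k = \frac{1}{\eps^n}\sum_l K^\eps_{kl}\psi_{kl} w_l = \frac{1}{\eps^n}\sum_l K^\eps_{kl}\psi_{kl}(w_l - w_k) = \frac{1}{a_k}\sum_l \pi^\eps_{kl}\,\frac{1}{\eps^n}\psi_{kl}(\grad_G w)_{kl}. \]
Here I use $K^\eps_{kl} = \pi^\eps_{kl}/a_k$. The degree of vertex $k$ in $G$ is $d_k = \sum_l \pi^\eps_{kl}(p) = a_k$. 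So the last expression is exactly $(\dive_G Z)_k$ with $Z_{kl} = \eps^{-n}\psi_{kl}(\grad_G w)_{kl}$.

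It remains to check that $Z$ is a legitimate edge flow, i.e. antisymmetric. This holds because $\grad_G w$ is antisymmetric and \cref{prop:pi-derivs} guarantees $\psi^\eps_{kl, i_1,\ldots,i_n} = \psi^\eps_{lk, i_1,\ldots,i_n}$. There is no real obstacle. The only point needing care is the row-sum identity, which requires $n \geq 1$ (for $n=0$ the claim would instead read $K^\eps w = w + \dive_G \grad_G w$). It also relies on the degree of the graph being $a_k$ for every $p$, so that the normalization in $\dive_G$ matches $K^\eps_{kl} = \pi^\eps_{kl}/a_k$.
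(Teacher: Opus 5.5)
Your proposal is correct and follows the paper's proof: both use the row-sum identity $\sum_l K^\eps_{kl} = 1$ to get $\sum_l K^\eps_{kl}\psi^\eps_{kl,i_1,\ldots,i_n} = 0$, then subtract $w_k$ times this vanishing sum to produce $(\grad_G w)_{kl}$ inside $\dive_G$. Your additional checks (antisymmetry of $Z$ via $\psi^\eps_{kl,\cdot} = \psi^\eps_{lk,\cdot}$, the degree $d_k = a_k$, and the need for $n \geq 1$) are correct details that the paper leaves implicit.
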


\begin{proof}[Proof of \cref{cor:K-eps-deriv-divergence}]
    From \cref{prop:pi-derivs}, 
    \[ \frac{\partial^n K^\eps}{\partial p_{i_1} \cdots \partial p_{i_n}}w = \frac{1}{\eps^{n}}  \left[  \sum_l K_{kl}^\eps \psi^\eps_{kl, i_1, \ldots, i_n}w_l \right]_k. \]  
    Observe that for any $k$,
    \[  \frac{1}{\eps^n} \sum_l K_{kl}^\eps \psi_{kl, i_1, \ldots, i_n}^\eps = \sum_l \frac{\partial^n K_{kl}^\eps}{\partial p_{i_1} \cdots \partial p_{i_n}} = 0 \]
    since $\sum_l K_{kl}^\eps = 1$. Therefore
    \[ \frac{1}{\eps^n}\sum_l K_{kl}^\eps(p) \psi^\eps_{kl, i_1, \ldots, i_n}(p)w_l = \frac{1}{\eps^n}\sum_l K_{kl}^\eps(p) \psi^\eps_{kl, i_1, \ldots, i_n}(p)(w_l - w_k) = (\dive_G Z)_k. \qedhere
    \] 
\end{proof}
\begin{corollary}
\label{cor:p-j-divergence}
Fix any $\eps \in (0, \eps_0)$, $p \in U$ and $i_1, \ldots, i_n \in [m]$. Consider the graph $G$ with $N$ vertices and edge weights $W = \pi^\eps(p)$. For each ordered partition $J$ of $\{i_1, \ldots, i_n \}$, define $P_J$ as in (\ref{def:P_J}), where either
\begin{enumerate}[label = (\roman*)]
    \item $A = I - K^\eps$ (restricted to $\ind^\perp$), or
    \item $A = I + K^\eps$.
\end{enumerate}
Then there exists $C_n \colon U \to \R_{> 0}$, which can be written as a continuous function of $\iota$ and its partial derivatives of order $\leq n$, such that for any $X \in L^2_\wedge(E)$ and ordered partition $J$ of $\{i_1, \ldots, i_n \}$, there exists $Z \in L^2_\wedge(E)$ satisfying
\[ P_J(p) \dive_G X = \dive_G Z  \]    
whose norm is bounded by
\[\left\|Z \right\|_{L^2_\wedge(E)} \leq \frac{C_n(p)}{\eps^n} \left\|X \right\|_{L^2_\wedge(E)}. \]
\end{corollary}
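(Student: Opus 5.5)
The plan is an induction on the number of factors in $P_J$, using that $P_J$ alternates between $A^{-1}$ and partial derivatives of $A$. Write $J = (J_1, \ldots, J_D)$, so that $P_J(p)\dive_G X$ is obtained by applying, for $d = D, D-1, \ldots, 1$, first $A^{-1}$ and then $\partial^{|J_d|} A/\partial p_{J_d}$. I would show that each of these two operations sends a divergence $\dive_G Y$ to a divergence $\dive_G Y'$, and bound $\|Y'\|_{L^2_\wedge(E)}$ in terms of $\|Y\|_{L^2_\wedge(E)}$. The exponents then add up to $\sum_d |J_d| = n$.

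\textbf{Step 1: applying $A^{-1}$.} In case (ii), $A = I + K^\eps$, and \cref{lemma:mult-by-Keps-div} gives $(I+K^\eps)^{-1}\dive_G Y = \dive_G (2I + \grad_G \dive_G)^{-1} Y$, with operator norm at most $1$. In case (i), $A = L_G$ restricted to $\ind^\perp$. We have $\dive_G Y \in (\ker L_G)^\perp$, as shown in the proof of \cref{lemma:grad-u-upper-bound}. I need to check that this lies in $\ind^\perp$ and that $A^{-1}$ agrees there with $L_G^\dagger$. Since $\langle \cdot,\cdot\rangle_{L^2(V)}$ with $d_k = a_k$ is exactly $\langle\cdot,\cdot\rangle_a$, and $\ker L_G = \mathrm{span}(\ind)$ for $\eps>0$ (all weights are positive), both facts hold. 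So $A^{-1}\dive_G Y = u := L_G^\dagger \dive_G Y$. This $u$ is not itself a divergence, but \eqref{eq:upper-bound-gradu} gives $\|\grad_G u\|_{L^2_\wedge(E)} \le \|Y\|_{L^2_\wedge(E)}$. In case (ii), I will keep the vector $w = (I+K^\eps)^{-1}\dive_G Y = \dive_G Y''$ and note that $\grad_G w = \grad_G \dive_G Y''$ has norm at most $\|Y''\|_{L^2_\wedge(E)} \le \|Y\|_{L^2_\wedge(E)}$, since the eigenvalues of $\grad_G\dive_G$ lie in $[-1,0]$. In both cases I therefore obtain a vector $w$ with $\|\grad_G w\|_{L^2_\wedge(E)} \le \|Y\|_{L^2_\wedge(E)}$.

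\textbf{Step 2: applying $\partial^{|J_d|}A$.} Up to sign, $\partial^{|J_d|} A / \partial p_{J_d} = \pm\, \partial^{|J_d|} K^\eps/\partial p_{J_d}$, because the identity has vanishing derivatives. In case (i) there is a subtlety: the restriction to $\ind^\perp$ with a fixed orthonormal basis might not commute with differentiation, since $\ind^\perp$ is defined via the fixed weights $a$. Because $a$ does not depend on $p$, however, $\ind^\perp$ is a fixed subspace and $K^\eps(p)$ preserves it, so the derivatives commute with the restriction. Then \cref{cor:K-eps-deriv-divergence} gives $\partial^{|J_d|}K^\eps\, w = \dive_G Y'$ with $Y'_{kl} = \eps^{-|J_d|}\psi^\eps_{kl,J_d}(\grad_G w)_{kl}$. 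Using \eqref{eq:psi-bound}, this yields $\|Y'\|_{L^2_\wedge(E)} \le C_{|J_d|}\,\eps^{-|J_d|}\|\grad_G w\|_{L^2_\wedge(E)} \le C_{|J_d|}\,\eps^{-|J_d|}\|Y\|_{L^2_\wedge(E)}$. The pointwise multiplication of an edge flow by the bounded symmetric weights $\psi^\eps_{kl,\cdot}$ keeps it antisymmetric, and the norm inequality follows directly from the definition of $\|\cdot\|_{L^2_\wedge(E)}$.

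\textbf{Step 3: iteration.} Iterating Steps 1 and 2 over $d = D, \ldots, 1$ gives $Z$ with $\|Z\|_{L^2_\wedge(E)} \le \prod_d C_{|J_d|}\,\eps^{-n}\|X\|_{L^2_\wedge(E)}$. To get a single constant, I take $C_n$ as the maximum over the finitely many ordered partitions of these products. Each factor is a continuous function of derivatives of $\iota$ of order $\le |J_d| \le n$. The main point needing care is Step 1 in case (i): identifying $A^{-1}$ with $L_G^\dagger$ on $\ind^\perp$, and passing through $\grad_G$ of the intermediate vector rather than through an edge flow. It is this use of \eqref{eq:upper-bound-gradu} that avoids any dependence on the spectral gap.
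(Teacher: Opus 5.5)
Your proposal is correct and follows essentially the same route as the paper. At each step you turn $A^{-1}$ applied to a divergence $\dive_G Y$ into a vector $w$ with $\|\grad_G w\|_{L^2_\wedge(E)} \le \|Y\|_{L^2_\wedge(E)}$ (via \cref{lemma:grad-u-upper-bound} in case (i) and \cref{lemma:mult-by-Keps-div} in case (ii)), then apply \cref{cor:K-eps-deriv-divergence} together with \eqref{eq:psi-bound}, and induct over the factors. Your two extra checks, that $A^{-1}$ on $\ind^\perp$ coincides with $L_G^\dagger$ and that differentiation commutes with restriction to the fixed subspace $\ind^\perp$, are details the paper leaves implicit.
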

Recall that we use $C_n \colon U \to \R_{> 0}$ to denote a function that can be expressed as a continuous function of $\iota$ and its partial derivatives of order $\leq n$, but that the precise function may change from line to line.
\begin{proof}
First consider when $A = I - K^\eps$, restricted to $\ind^\perp$. For an edge flow $Y$, 
\[ \langle \dive_G Y, \ind \rangle_a = -\langle Y, \grad_G \ind \rangle_{L^2_\wedge(E)} = 0, \]
so $\dive_G Y \in \ind^\perp.$
Let $w \vc= (I- K^\eps(p))^{-1} \dive_G Y,$ where $(I- K^\eps(p))^{-1}$ is understood as an operator from $\ind^\perp$ to itself.
By \cref{cor:K-eps-deriv-divergence},
\begin{equation}
\label{eq:one-step-div}
\left[\frac{\partial^{|J_d|}K^\eps}{\partial p_{J_d}} (p) \right]w = \dive_G Z^d , \quad Z^d_{kl} \vc = \frac{1}{\eps^{|J_d|}} \psi^\eps_{kl, J_d}(p) (\grad_G w)_{kl}.
\end{equation}
   Now use (\ref{eq:psi-bound}) to obtain
    \[ \left\| Z^d \right\|_{L^2_\wedge (E)} \leq \frac{C_{|J_d|}(p)}{\eps^{|J_d|}} \left\| \grad_G w \right\|_{L^2_\wedge (E)} \leq \frac{C_{|J_d|}(p)}{\eps^{|J_d|}}  \left\| Y \right\|_{L^2_\wedge (E)}, \]
    where the last inequality follows from \cref{lemma:grad-u-upper-bound}. The corollary statement for $A = I - K^\eps$ then follows from induction.

    Suppose instead that $A = I + K^\eps$ and $w = (I+ K^\eps(p))^{-1} \dive_G Y$. (\ref{eq:one-step-div}) still holds, and by \cref{lemma:mult-by-Keps-div}, 
    $w = \dive_G \widehat{Y}$, where $\widehat{Y} \vc= (2I + \grad_G \dive_G)^{-1}Y$. Therefore
    \begin{align*}
    \left\| Z^d \right\|_{L^2_\wedge (E)} &\leq \frac{C_{|J_d|}(p)}{\eps^{|J_d|}} \left\| \grad_G w \right\|_{L^2_\wedge (E)} \\
    &= \frac{C_{|J_d|}(p)}{\eps^{|J_d|}} \left \| \grad_G \dive_G (2I + \grad_G \dive_G)^{-1}Y \right\|_{L^2_\wedge(E)} \\
    &\leq \frac{C_{|J_d|}(p)}{\eps^{|J_d|}} \left \| Y \right\|_{L^2_\wedge(E)},
    \end{align*}
    where we recall from \cref{lemma:mult-by-Keps-div} that the eigenvalues of $\grad_G \dive_G$ lie in $[-1, 0]$. The corollary again follows from induction.
\end{proof}

Given \cref{prop:pi-derivs} and its corollaries, we have everything we need to prove \cref{thm:derivs-of-g}.
\begin{proof}[Proof of \cref{thm:derivs-of-g}]
    Fix any $p \in U$. Recall the decomposition $g^{\eps}_{ij} = \widetilde{g}^\eps_{ij} + r^\eps_{ij} + s^\eps_{ij}$ from \cref{lemma:decomp}, where
\begin{align*}
    \widetilde{g}^\eps_{ij} &\vc = \sum_{k, l} \pi_{kl}^\eps v_{k, i}^\top v_{l, j} \\
    r^\eps_{ij} &\vc = \frac{2}{\eps} \sum_{k, l} \pi_{kl}^\eps v_{k, i}^\top (\iota_l - m_k^\eps)(\iota_k - m_l^\eps)^\top v_{l, j}  \\
    s^\eps_{ij} &\vc = \frac{\eps}{2} \langle h_i^\eps, K^\eps(I-(K^\eps)^2)^\dagger h_j^\eps \rangle_a
\end{align*}
and $A^\dagger$ denotes the generalized inverse of $A$ with respect to $\langle \cdot, \cdot \rangle_a$, as in (\ref{eq:generalized-inverse}).
    As before, $C_n \colon U \to \R_{> 0}$ will always denote a function that can be expressed as a continuous function of $\iota$ and its partial derivatives of order $\leq n$, but the precise function may change from line to line.
    
    ~\\
    \textbf{Derivatives of $\widetilde{g}^\eps$.} 
    Since $g^0_{ij} = \sum_{k} a_k v_{k, i}^\top v_{k, j}$,
    \[ \frac{\partial^n g^0_{ij}}{\partial p_{i_1} \dots \partial p_{i_n}} = \sum_k a_k \frac{\partial^n[v_{k, i}^\top v_{k, j}]}{\partial p_{i_1} \dots \partial p_{i_n}}.  \]
    Upon repeated applications of the product rule to $\widetilde{g}^\eps_{ij}$ and using \cref{prop:pi-derivs}, we have that there exists a $C_{n+1}$ such that
    \[ \left| \frac{\partial^n \widetilde{g}^\eps_{ij}}{\partial p_{i_1} \dots \partial p_{i_n}} - \frac{\partial^n g^0_{ij}}{\partial p_{i_1} \dots \partial p_{i_n}} \right| \leq  \frac{C_{n+1} \alpha^\eps}{\eps^n} \]
    for all $\eps \in (0, \eps_0).$ Indeed, it is easy to see how to bound
    \[ \frac{\partial^n \pi_{kl}^\eps}{\partial p_{i_1} \cdots \partial p_{i_n}} = \frac{1}{\eps^n} \pi_{kl}^\eps \psi^\eps_{kl, i_1, \ldots, i_n} \]
    whenever $k \neq l$
    in terms of $\alpha^\eps$ and the $C_n$ from \cref{prop:pi-derivs}. As $\sum_l \pi_{kl}^\eps = a_k$, one can also bound the on-diagonal terms by using
    \[ \frac{\partial^n \pi_{kk}^\eps}{\partial p_{i_1} \cdots \partial p_{i_n}} = -\sum_{\substack{l \in [N],\\ l \neq k}} \frac{\partial^n \pi_{kl}^\eps}{\partial p_{i_1} \cdots \partial p_{i_n}} \]
    whenever $n \geq 1$.
    
    
    ~\\
    \textbf{Derivatives of $r^\eps$.} 
    Since $m_k^\eps = \frac{1}{a_k} \sum_l \pi_{kl}^\eps \iota_l$, again by \cref{prop:pi-derivs}, there exists a $C_n$ satisfying
    \begin{equation}
    \label{eq:derivs-of-m}
    \left\|\frac{\partial^n m_k^\eps}{\partial p_{i_1} \cdots \partial p_{i_n}} - \frac{\partial^n \iota_k}{\partial p_{i_1} \cdots \partial p_{i_n}}\right\| \leq \frac{C_n \alpha^\eps}{\eps^n} 
    \end{equation}
    for all $\eps \in (0, \eps_0)$. 
    This controls the derivatives of $\pi_{kk}^\eps (\iota_k - m_k^\eps)(\iota_k - m_k^\eps)^\top$: for some $C_n$,
    \[ \left\| \frac{\partial^n [\pi_{kk}^\eps (\iota_k - m_k^\eps)(\iota_k - m_k^\eps)^\top]}{\partial p_{i_1} \cdots \partial p_{i_n}} \right\|_{op} \leq \frac{C_n (\alpha^\eps)^2}{\eps^n} \]
    for all $\eps \in (0, \eps_0)$, where $\| \cdot \|_{op}$ denotes the operator norm.
    By (\ref{eq:derivs-of-m}), we also have
    \begin{equation}
    \label{eq:off-diag-difference}
    \left\|\frac{\partial^n m_k^\eps}{\partial p_{i_1} \cdots \partial p_{i_n}} - \frac{\partial^n \iota_l}{\partial p_{i_1} \cdots \partial p_{i_n}}\right\| \leq \left\|\frac{\partial^n \iota_k}{\partial p_{i_1} \cdots \partial p_{i_n}} - \frac{\partial^n \iota_l}{\partial p_{i_1} \cdots \partial p_{i_n}}\right\| + \frac{C_n \alpha^\eps}{\eps^n}.
    \end{equation}
    The derivatives of $\pi_{kl}^\eps (\iota_l - m_k^\eps)(\iota_k - m_l^\eps)^\top$ for $k \neq l$ can be controlled using (\ref{eq:off-diag-difference}) and upper bounds on $\pi_{kl}^\eps$ and its derivatives, so that altogether we have the existence of a $C_n$ satisfying
    \[ \left\| \frac{\partial^n [\pi_{kl}^\eps (\iota_l - m_k^\eps)(\iota_k - m_l^\eps)^\top]}{\partial p_{i_1} \cdots \partial p_{i_n}} \right\|_{op} \leq \frac{C_n \alpha^\eps}{\eps^n} \]
    for all $k, l \in [N]$ and $\eps \in (0, \eps_0)$. One then obtains, for some $C_{n+1}$,
    \[ \left| \frac{\partial^n r_{ij}^\eps}{\partial p_{i_1} \cdots \partial p_{i_n}}  \right| \leq  \frac{C_{n+1}\alpha^\eps}{\eps^{n+1}} \]
    for all $\eps \in (0, \eps_0)$.

    ~\\
    \textbf{Derivatives of $s^\eps$.} Let us define $\vv^\eps_i \colon U \to \R^N$ and $\ww^\eps_i \colon U \to \R^N$ by
    \begin{align*}
    \vv^\eps_i &\vc = \frac{\eps}{2} (I + K^\eps)^{-1} K^\eps h_i^\eps \\ \ww^\eps_i &\vc = \frac{\eps}{2} (I-K^\eps)^\dagger h_i^\eps
    \end{align*}
    so that
    \[s^\eps_{ij}  = \frac{\eps}{2} \left \langle  (I + K^\eps)^{-1} K^\eps h_i^\eps, (I-K^\eps)^\dagger h_j^\eps \right \rangle_a = \frac{2}{\eps} \langle \vv_i^\eps, \ww_j^\eps \rangle_a.\]
    Recall from \cref{lemma:divergence} that for each $p \in U$, we have
    \begin{align*}
    \vv^\eps_i(p) &=  (I + K^\eps(p))^{-1} K^\eps(p) \dive_G X^i(p) =  \dive_G X^i(p) - (I + K^\eps(p))^{-1}  \dive_G X^i(p) \\ 
    \ww^\eps_i(p) &=  (I-K^\eps(p))^\dagger \dive_G X^i(p) 
    \end{align*}
    where $G$ is the graph with edge weights $W = \pi^\eps(p)$ and
    $X^{i}_{kl} = \left( \iota_k - m_l^\eps \right)\cdot v_{l, i} - \left( \iota_l - m_k^\eps \right) \cdot v_{k, i}.$
    Using (\ref{eq:off-diag-difference}), one obtains some $C_{n+1}$ satisfying
    \[ \left| \frac{\partial^n X^{i}_{kl}}{\partial p_{i_1} \cdots \partial p_{i_n}} \right| \leq C_{n+1} \]
    for all $k \neq l$ and $\eps \in (0, \eps_0)$. 
    For each fixed $p \in U$,
    taking $\eta_k(p) = (\dive_G X^i(p))_k$ in \cref{cor:deriv-of-graph-div}, we have that
    \[ \frac{\partial^n \eta_k}{\partial p_{i_1} \cdots \partial p_{i_n}}(p) = (\dive_G Z^i(p))_k \]
    where the edge flow $Z^i(p)$ is given by
    \[Z_{kl}^i(p) = \sum_{S \subseteq [n]} \eps^{ - |S|}\psi^\eps_{kl, i_S}(p) \frac{\partial^{|[n] \setminus S|}X_{kl}^i}{\partial p_{i_{[n] \setminus S}}}(p),\]
    so that there exists $C_{n+1}$ satisfying
    \[ \left\| Z^i(p) \right\|_{L^2_\wedge(E)} = \left( \sum_{k < l} \pi^\eps_{kl}(p)  Z^i_{kl}(p)^2 \right)^{1/2} \leq \frac{ C_{n+1}(p) \sqrt{\alpha^\eps}}{\eps^n} \]
    for all $\eps \in (0, \eps_0)$. As we have
    \[ \vv^\eps_i(p) =  \dive_G X^i(p) - (I + K^\eps(p))^{-1}  \dive_G X^i(p), \]
    we can use the product rule and (\ref{eq:partial-derivs-of-inverse}) with \cref{cor:p-j-divergence} and \cref{lemma:mult-by-Keps-div} to obtain an edge flow $Z^{\vv_i^\eps}$ such that
     \[ \dive_G Z^{\vv_i^\eps} = \frac{\partial^n \vv_i^\eps}{\partial p_{i_1} \cdots \partial p_{i_n}} (p), \quad \left \| Z^{\vv_i^\eps} \right\|_{L^2_\wedge(E)} \leq \frac{C_{n+1}(p)\sqrt{\alpha^\eps}}{\eps^n} \]
     for some $C_{n+1}$.
    Similarly, for $\ww^\eps_i(p) =  (I-K^\eps(p))^\dagger \dive_G X^i(p)$,
    we have some $C_{n+1}$ and an edge flow $Z^{\ww_i^\eps}$ such that 
    \[ (I - K^\eps(p))^\dagger\dive_G Z^{\ww_i^\eps} = \frac{\partial^n \ww_i^\eps}{\partial p_{i_1} \cdots \partial p_{i_n}} (p), \quad \left \| Z^{\ww_i^\eps} \right\|_{L^2_\wedge(E)} \leq \frac{C_{n+1}(p)\sqrt{\alpha^\eps}}{\eps^n}\]
    for all $\eps \in (0, \eps_0)$.
    Applying the product rule to $ \frac{\eps}{2} s_{ij}^\eps =  \langle \vv_i^\eps, \ww_j^\eps \rangle$ and using \cref{lemma:grad-u-upper-bound} then gives us, for some $C_{n+1}$,
     \[ \frac{\eps}{2} \left| \frac{\partial^n s_{ij}^\eps}{\partial p_{i_1} \cdots \partial p_{i_n}} \right| \leq  \frac{C_{n+1}\alpha^\eps}{\eps^{n}} \]
    for all $\eps \in (0, \eps_0)$.
\end{proof}

We finally turn to the proof of \cref{prop:pi-derivs}.
This is done with the following lemma and an induction argument.

\begin{lemma}[First order derivatives of $\ff$ with respect to $p$]
\label{lemma:ff-derivs}
    The first partial derivatives of $\ff$ are given by
    \begin{equation}
    \label{eq:first-partial-ff}
        \frac{\partial \ff}{\partial p_i} = 
        2\left( I + K^\eps \right)^{-1} \left( \bigg[ \sum_l K^\eps_{kl} \langle \iota_k - \iota_l , v_{k, i} - v_{l, i} \rangle \bigg]_k \right).
    \end{equation}
    Therefore
        \begin{equation}
        \label{eq:potential-deriv-upper-bound}
        \left\| \frac{\partial \ff}{\partial p_i} \right\|_a \leq 2 \left\| \bigg[ \sum_l K^\eps_{kl} \langle \iota_k - \iota_l , v_{k, i} - v_{l, i} \rangle \bigg]_k \right\|_a \leq 4 \left( \left( \sum_k \frac{1}{a_k} \right)^{1/2} \alpha^\eps B D \right).
    \end{equation}
\end{lemma}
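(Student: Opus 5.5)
The plan is to differentiate the Schrödinger system for the self-transport problem at $\mu_p$ with respect to $p_i$ and solve the resulting linear system. By \eqref{eq:schrodinger-1}, with the symmetric convention $f^\eps_{\mu_p,\mu_p}=g^\eps_{\mu_p,\mu_p}$, we have for each $k\in[N]$
\[ \sum_l a_l \exp\left( \frac{\ff_k + \ff_l - \|\iota_k - \iota_l\|^2}{\eps}\right) = 1, \]
that is, $\sum_l K^\eps_{kl} = 1$ identically on $U$. Smoothness of $\ff$ in $p$ is taken from the implicit function theorem argument cited above (\cite{xu2026convergence}), so we may differentiate this identity in $p_i$.

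Using $\partial_{p_i}\|\iota_k-\iota_l\|^2 = 2\langle \iota_k-\iota_l, v_{k,i}-v_{l,i}\rangle$, the chain rule gives
\[ \sum_l K^\eps_{kl}\left( \frac{\partial \ff_k}{\partial p_i} + \frac{\partial \ff_l}{\partial p_i} - 2\langle \iota_k-\iota_l, v_{k,i}-v_{l,i}\rangle \right) = 0 . \]
Since $\sum_l K^\eps_{kl}=1$, this becomes $(I+K^\eps)\,\partial_{p_i}\ff = 2b$, where $b_k \vc= \sum_l K^\eps_{kl}\langle \iota_k-\iota_l, v_{k,i}-v_{l,i}\rangle$.

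Next we show $I+K^\eps$ is invertible with a controlled inverse. The matrix $K^\eps$ is self-adjoint with respect to $\langle\cdot,\cdot\rangle_a$, because $a_kK^\eps_{kl}=\pi^\eps_{kl}$ is symmetric. Its spectrum lies in $[0,1]$ by \cite{lavenant2025riemannian}. Hence $I+K^\eps$ is invertible, and $(I+K^\eps)^{-1}$ is $a$-self-adjoint with eigenvalues in $[1/2,1]$, so its operator norm on $(\R^N,\|\cdot\|_a)$ is at most $1$. Solving the linear system yields \eqref{eq:first-partial-ff}, and the operator-norm bound gives the first inequality in \eqref{eq:potential-deriv-upper-bound}.

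For the second inequality in \eqref{eq:potential-deriv-upper-bound}, note that the diagonal term $l=k$ in $b_k$ vanishes. The remaining terms satisfy $|\langle \iota_k-\iota_l, v_{k,i}-v_{l,i}\rangle|\le 2BD$, so
\[ |b_k| \le 2BD\sum_{l\neq k}K^\eps_{kl} = \frac{2BD}{a_k}\sum_{l\ne k}\pi^\eps_{kl}\le \frac{2BD\,\alpha^\eps}{a_k}. \]
Therefore $\|b\|_a^2=\sum_k a_k b_k^2 \le 4(BD\alpha^\eps)^2\sum_k a_k^{-1}$, and multiplying by $2$ gives the constant $4$. There is no real obstacle here. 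The only points requiring care are the justification of differentiability, which is handled by the cited implicit function theorem argument, and using the $a$-self-adjointness of $K^\eps$ so that the spectral bound of \cite{lavenant2025riemannian} gives the norm bound $\|(I+K^\eps)^{-1}\|\le 1$ in $\|\cdot\|_a$.
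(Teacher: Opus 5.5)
Your proof is correct and follows the paper's argument. You differentiate the Schr\"odinger system, using the exponential form $\sum_l K^\eps_{kl}=1$ where the paper uses the equivalent $-\eps\log$ form. You then solve $(I+K^\eps)\,\partial_{p_i}\ff = 2b$ via the spectrum of $K^\eps$ in $[0,1]$ and bound the entries $b_k$ through the off-diagonal mass. Your explicit remark that $K^\eps$ is self-adjoint with respect to $\langle\cdot,\cdot\rangle_a$, which is what turns the spectral bound into $\|(I+K^\eps)^{-1}\|\le 1$ in $\|\cdot\|_a$, is a step the paper leaves implicit.
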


\begin{proof}
    The Schr\"odinger system implies that
    \[ \ff_k = -\eps \log \left( \sum_l a_l \exp \left( \frac{\ff_l - \|\iota_k - \iota_l\|^2}{\eps} \right) \right). \]
    Differentiating with respect to $p_i$ then gives us
    \begin{align*}
        \frac{\partial \ff_k}{\partial p_i} &= - \frac{\sum_l a_l \exp \left( \frac{\ff_l - \|\iota_k - \iota_l\|^2}{\eps} \right) \left( \frac{\partial \ff_l}{\partial p_i} - 2\langle \iota_k - \iota_l, v_{k, i} - v_{l, i} \rangle \right) }{\sum_l a_l \exp \left( \frac{\ff_l - \|\iota_k - \iota_l\|^2}{\eps} \right)} \\
        &= - \sum_l a_l \exp \left( \frac{\ff_k + \ff_l - \|\iota_k - \iota_l\|^2}{\eps} \right) \left( \frac{\partial \ff_l}{\partial p_i} - 2\langle \iota_k - \iota_l, v_{k, i} - v_{l, i} \rangle \right) \\
        &= -\sum_l K^\eps_{kl} \left( \frac{\partial \ff_l}{\partial p_i} - 2\langle \iota_k - \iota_l, v_{k, i} - v_{l, i} \rangle \right).
    \end{align*}
    (\ref{eq:first-partial-ff}) and the first inequality of (\ref{eq:potential-deriv-upper-bound}) follow from rearranging and recalling that the spectrum of $K^\eps(p)$ lies in $[0, 1]$ for all $p \in U$ \cite{lavenant2025riemannian}.
    The second inequality of (\ref{eq:potential-deriv-upper-bound}) follows from observing that for each fixed $k$,
    \[
   \left| \sum_l K^\eps_{kl} \langle \iota_k - \iota_l , v_{k, i} - v_{l, i} \rangle \right|  = \frac{1}{a_k} \left| \sum_{l \neq k} \pi^\eps_{kl} \langle \iota_k - \iota_l , v_{k, i} - v_{l, i} \rangle \right| \leq \frac{2 \alpha^\eps B D}{a_k}.
   \]
\end{proof}

\begin{proof}[Proof of \cref{prop:pi-derivs}]
        By the chain rule, the first partial derivatives of the self-transport plan $\pi^\eps$ are given by
    \begin{equation}
        \frac{\partial \pi_{kl}^\eps}{\partial p_i} = \frac{1}{\eps} \pi_{kl}^\eps \left( \frac{\partial \ff_k}{\partial p_i} + \frac{\partial \ff_l}{\partial p_i} - 2\langle \iota_k - \iota_l, v_{k, i} - v_{l, i} \rangle \right).
    \end{equation}
    Then by \cref{lemma:ff-derivs}, the proposition statement holds for $n = 1$. Though (\ref{eq:potential-deriv-upper-bound}) is stated in terms of $\|\cdot\|_a$, one easily obtains entry-wise bounds by
$a_k \left|\frac{\partial \ff_k}{\partial p_i}  \right|^2 \leq  \left\| \frac{\partial \ff}{\partial p_i} \right\|_a^2.$

    Now consider $n \geq 2$ and suppose the proposition statement holds for all partial derivatives of order $< n$. For convenience, define
    \[ F_{kl}^\eps \vc = \ff_k + \ff_l - \|\iota_k - \iota_l \|^2. \]
    By Fa\`a di Bruno's formula (i.e., repeatedly applying the chain and product rules),
    we have
    \[\frac{\partial^n \pi^\eps_{kl}}{\partial p_{i_1} \cdots \partial p_{i_n}} = \frac{1}{\eps^n} \pi^\eps_{kl}\sum_J \eps^{n-|J|}  \prod_{d = 1}^{|J|} \frac{\partial^{|J_d|}F_{kl}^\eps}{\partial p_{J_d}} \]
    where the sum is over (unordered) partitions $J$ of $\{i_1, \ldots, i_n\}$. 
    For any subset $I'$ of $\{i_1, \ldots, i_n\}$, one can differentiate (\ref{eq:first-partial-ff}) and write 
    \[ p \mapsto \frac{\partial^{|I'|} \ff}{\partial p_{I'}}(p)\] 
    in terms of partial derivatives of $\pi^\eps$ only of order $\leq n-1$ and derivatives of $\iota$ of order $\leq n$. Then by applying the inductive hypothesis, one can bound $\left\|\frac{\partial^{|I'|}\ff}{\partial p_{I'}}\right\|_a$ uniformly across $\eps \in (0, \eps_0)$ by some $C_n$ depending continuously on $\iota$ and its derivatives of order $\leq n$.
    Therefore the absolute value of
    \[ \psi^\eps_{kl, i_1, \ldots, i_n}(p) \vc = \sum_J \eps^{n-|J|}  \prod_{d = 1}^{|J|} \frac{\partial^{|J_d|}F_{kl}^\eps}{\partial p_{J_d}}(p) \]
    can also be bounded uniformly across $\eps \in (0,\eps_0)$ with some such $C_n$.
\end{proof}




\section*{AI disclosure}
AI tools were used to assist with this work.
In particular, we had observed that $\widetilde{g}^\eps(p) \to g(p) $ and $r^\eps(p) \to 0$ as $\eps \to 0$, and that $s^\eps(p) \to 0$ as $\eps \to 0$ under the  assumption (\ref{eq:spectral-gap-assump}) on the spectral gap of the Markov chain with transitions given by $K^\eps(p)$. Based on an initial expansion of the potentials that we came up with, Claude Opus 4.8 gave us a proof under an assumption on the spectral gap of a slightly different matrix. It suggested \cref{ex:violates-spec-gap} as an example which did \textit{not} satisfy its assumptions. 

However, upon computing the eigendecomposition of $K^\eps(0)$ in \cref{ex:violates-spec-gap}, we saw that it could be used to prove the desired convergence of $s^\eps(0) \to 0$ for \cref{ex:violates-spec-gap}. Therefore, we suspected that a spectral gap assumption was in fact not necessary, but knew that using the spectral gap to bound the generalized inverse of $I - K^\eps$ (as in \cref{rmk:spec-gap}) would not be enough.

Given this information, ChatGPT 5.6 Sol recognized that $\frac{\eps}{2} h_i^\eps$ could be written as the graph divergence of an edge flow (\cref{lemma:divergence}), and that this could be used to prove the convergence of $s^\eps(p) \to 0$ as $\eps \to 0$ via a Cauchy-Schwarz argument and (\ref{eq:comparison-w-neg-sob}). It also proved the quantitative bound in \cref{thm:conv-riem-metric} when $\iota$ is a curve (i.e., $U = (-\delta, \delta)$ for some $\delta > 0$); as noted in \cref{sec:conv}, we can use a polarization argument to apply this to more general open sets $U \subset \R^m$. However, we observed that a similar approach could be used to prove the convergence of the derivatives of $g^\eps$ as $\eps \to 0$ (\cref{thm:derivs-of-g}) and opted to slightly modify the argument in \cref{sec:conv} because of this (see the discussion before \cref{lemma:mult-by-Keps-div}). LLM-generated contributions have since been restructured and rewritten, and we take responsibility for the correctness of the proofs.

\section*{Acknowledgments}
L.X. was supported by
the National Science Foundation Graduate Research Fellowship Program under Grant No. DGE-2444107
and the Simons Foundation Math+X Investigator Award to Amit Singer. Any opinions,
findings, and conclusions or recommendations expressed in this material are those of the
authors and do not necessarily reflect the views of the National Science Foundation. 

\bibliographystyle{abbrv}
\bibliography{paper-refs}

\appendix

\section{On the regularity conditions for \cref{prop:r-eps-cont-setting}}
\label{app:cramer-rao}

Here we provide a more detailed justification that our regularity assumptions for \cref{prop:r-eps-cont-setting} are sufficient.
Specifically, the proof of \cref{prop:r-eps-cont-setting} requires 
\begin{enumerate}[label = (\roman*)]
    \item the Fisher information of $\mu$ to be well-defined and finite, and
    \item for a Cram\'er-Rao type lower bound to hold for each $\pi_x^\eps$ given by $d\pi_x^\eps(y) = k_\mu^\eps(x, y) d\mu(y)$.
\end{enumerate} 
If $\mu$ has a density $\rho$ with respect to the Lebesgue measure and $\sqrt{\rho} \in H^1(\R^d)$, then its Fisher information
\begin{equation}
\label{eq:fi}
I(\mu) \vc = 4\int_{\R^d} \left\| \nabla \sqrt{\rho} \right\|^2 dx 
\end{equation}
is finite. In particular, $\sqrt{\rho} \in H^1(\R^d)$ requires $\sqrt{\rho}$ to be weakly differentiable, i.e., for an integration by parts identity to hold.
The proof of Cram\'er-Rao also relies heavily on integration by parts. On a bounded domain, integration by parts is Green's identity, which typically involves a boundary integral unless appropriate boundary conditions are imposed. We argue that both (i) and (ii) hold under the assumptions on $\mu$ in \cref{prop:r-eps-cont-setting}, which we repeat here for convenience.

\begin{assumption}
\label{assump:mu-density}
    Suppose $\Omega \subset \R^d$ is a bounded open set with $C^1$ boundary, and $\mu\in \mathcal{P}(\overline{\Omega})$ has density $\rho$ (with respect to the Lebesgue measure) satisfying:
\begin{itemize}[label = {--}]
    \item $\rho(x) > 0$ for $x \in \Omega$,
    \item $\rho(x) = 0$ for $x \in \partial \Omega$,
    \item $\widetilde{\rho} \vc= \sqrt{\rho} \in C^2(\overline{\Omega})$.
\end{itemize}
We also define $W(x) \vc= -\log \rho(x) = - 2 \log \widetilde{\rho}(x) $ for $x \in \Omega$, i.e., $\rho(x) = e^{-W(x)}$ on $\Omega$. 
\end{assumption}

Due to the boundary condition $\widetilde{\rho}(x) = 0$ for $x \in \partial \Omega$, we have, for any $\varphi \in C^1(\overline{\Omega})$,
\[ \int_\Omega \varphi(x) \nabla \widetilde{\rho}(x) dx = - \int_\Omega \widetilde{\rho}(x) \nabla \varphi(x) dx \]
by Green's identity (e.g., \cite[Appendix C.2]{evans}). Therefore $\widetilde{\rho} = \sqrt{\rho}$ (extended naturally to $\R^d$ by $\widetilde{\rho}(x) = 0$ for $x \notin \overline{\Omega}$) is weakly differentiable and, as $\Omega$ is bounded, $\widetilde{\rho} \in H^1(\R^d)$.
In fact, this only required $\widetilde{\rho} \in C^1(\overline{\Omega})$. 

We now show that a Cram\'er-Rao lower bound for $\mu$ holds under Assumption \ref{assump:mu-density}.

\begin{lemma}
    \label{lemma:cramer-rao}
    Under Assumption \ref{assump:mu-density}, $\nabla^2W$ is integrable with respect to $\mu$, $\E_\mu\left[\nabla^2W \right]$ is positive definite, and
\begin{equation}
\label{eq:cramer-rao}
\mathrm{Cov}_{X \sim \mu}(X) \succeq \left( \E_\mu \left[\nabla^2W \right] \right)^{-1}. 
\end{equation}
\end{lemma}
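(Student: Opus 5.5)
We prove the three claims in order: integrability of $\nabla^2 W$, an integration by parts identity for $\E_\mu[\nabla^2 W]$, and then the Cram\'er--Rao bound (\ref{eq:cramer-rao}) via Cauchy--Schwarz.

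\textbf{Integrability.} On $\Omega$ we have $W = -2\log\widetilde{\rho}$, so
\[ \nabla^2 W = -\frac{2\nabla^2\widetilde{\rho}}{\widetilde{\rho}} + \frac{2\nabla\widetilde{\rho}\,\nabla\widetilde{\rho}^\top}{\widetilde{\rho}^2}. \]
Multiplying by $\rho = \widetilde{\rho}^2$ gives
\[ \rho\,\nabla^2 W = -2\widetilde{\rho}\,\nabla^2\widetilde{\rho} + 2\nabla\widetilde{\rho}\,\nabla\widetilde{\rho}^\top. \]
Both terms are continuous on $\overline{\Omega}$ because $\widetilde{\rho}\in C^2(\overline{\Omega})$, and $\Omega$ is bounded, so $\nabla^2 W\in L^1(\mu)$.

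\textbf{Integration by parts.} Since $\widetilde{\rho}=0$ on $\partial\Omega$, Green's identity kills the boundary term in $\int_\Omega \widetilde{\rho}\,\partial_{ij}\widetilde{\rho}\,dx$, giving
\[ \int_\Omega \widetilde{\rho}\,\partial_{ij}\widetilde{\rho}\,dx = -\int_\Omega \partial_i\widetilde{\rho}\,\partial_j\widetilde{\rho}\,dx. \]
Substituting into the formula for $\rho\nabla^2 W$ yields
\[ \E_\mu[\nabla^2 W] = 4\int_\Omega \nabla\widetilde{\rho}\,\nabla\widetilde{\rho}^\top dx = \E_\mu[\nabla W\,\nabla W^\top], \]
which is the Fisher information matrix $\mathcal{I}$. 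Here $\nabla W = -2\nabla\widetilde{\rho}/\widetilde{\rho}$ is square integrable against $\mu$ because $\rho\|\nabla W\|^2 = 4\|\nabla\widetilde{\rho}\|^2$.

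\textbf{Positive definiteness.} The key identity is
\[ \E_\mu[(X-m)\nabla W(X)^\top] = I, \qquad m = \E_\mu[X]. \]
To see it, write $\rho\nabla W = -\nabla\rho$ and integrate by parts. The boundary term vanishes since $\rho=0$ on $\partial\Omega$, leaving $\int (x-m)_i\,\partial_j\rho\,dx = -\delta_{ij}$. Because $\rho$ is only known to be $C^2(\overline{\Omega})$ with a $C^1$ boundary, Green's identity applies directly to $(x-m)_i\rho$. Now if $w^\top\mathcal{I}w=0$, then $w^\top\nabla W=0$ $\mu$-a.e. Pairing the identity with $w$ on both sides then gives $\|w\|^2 = 0$, so $\mathcal{I}$ is positive definite.

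\textbf{Cram\'er--Rao.} For any $u,w\in\R^d$, Cauchy--Schwarz applied to $\E[u^\top(X-m)\cdot w^\top\nabla W]=u^\top w$ gives
\[ (u^\top w)^2 \le \left(u^\top\mathrm{Cov}(X)\,u\right)\left(w^\top\mathcal{I}\,w\right). \]
Choosing $w=\mathcal{I}^{-1}u$ yields $u^\top\mathcal{I}^{-1}u \le u^\top\mathrm{Cov}(X)\,u$, which is (\ref{eq:cramer-rao}).

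The main obstacle is justifying the boundary-term vanishing. This is exactly where $\widetilde{\rho}=0$ on $\partial\Omega$ and the $C^2$ regularity are used. A second, milder care point is that $\nabla W$ blows up at $\partial\Omega$. We avoid this by always integrating against $\rho$ and writing $\rho\nabla W=-\nabla\rho$ and $\rho\nabla^2W$ in terms of $\widetilde{\rho}$ before applying Green's identity. The same argument applies verbatim to each measure $\pi_x^\eps$: it has density $\rho(y)\,\kk_\mu^\eps(x,y)$, whose square root $\widetilde{\rho}(y)\sqrt{\kk_\mu^\eps(x,y)}$ is still $C^2(\overline{\Omega})$ and vanishes on $\partial\Omega$.
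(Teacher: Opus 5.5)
Your proof is correct and follows essentially the same route as the paper: integrability from $\rho\,\nabla^2 W = 2(\nabla\widetilde{\rho}\,\nabla\widetilde{\rho}^\top - \widetilde{\rho}\,\nabla^2\widetilde{\rho})$, then Green's identity giving $\E_\mu[\nabla^2 W] = 4\int_\Omega \nabla\widetilde{\rho}\,\nabla\widetilde{\rho}^\top dx$, then the identity $\E_\mu[(X-m)\nabla W(X)^\top] = I$ (which the paper writes as $w = -2\int_\Omega (h-\E_\mu[h])\,\widetilde{\rho}\,\nabla\widetilde{\rho}\,dx$ for $h = \langle w,\cdot\rangle$), and finally Cauchy--Schwarz in the direction $\E_\mu[\nabla^2 W]^{-1}u$. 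The only cosmetic difference is that you get positive definiteness from a kernel argument, whereas the paper reads it off the Cauchy--Schwarz bound $\|w\|^2 \le \mathrm{Var}_\mu(h)^{1/2}\,(w^\top \E_\mu[\nabla^2 W]\,w)^{1/2}$.
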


\begin{remark}
Recall that we would like to apply Cram\'er-Rao to $d\pi_x^\eps = k_\mu^\eps(x, y) d\mu(y)$. If $\mu$ satisfies Assumption \ref{assump:mu-density}, then $\pi_x^\eps$ has density 
\[y \mapsto k_\mu^\eps(x, y) \rho(y) = \exp \left( \frac{f_{\mu, \mu}^\eps(x) + f_{\mu, \mu}^\eps(y) - \|x-y\|^2}{\eps} \right) \rho(y).\]
Since $f_{\mu, \mu}^\eps$ is smooth (recall the representation \eqref{eq:schrodinger-1-1}), each $\pi_x^\eps$ satisfies Assumption \ref{assump:mu-density} whenever $\mu$ does, so we can apply \cref{lemma:cramer-rao} to $\pi_x^\eps$.
\end{remark}

The proof is similar to \cite{chewi2023entropic}. However, as we are working with a bounded domain $\Omega$ and $W$ goes to infinity as one approaches the boundary $\partial \Omega$, we provide additional details to justify the Cram\'er-Rao inequality under Assumption \ref{assump:mu-density}.

\begin{proof}[Proof of \cref{lemma:cramer-rao}]
First we check that $\nabla^2 W$ is integrable with respect to $\mu$. For this, it is convenient to write the derivatives of $W = - 2 \log \widetilde{\rho}$ in terms of $\widetilde{\rho}$:
\[ \frac{\partial W}{\partial x_k} = -\frac{2}{\widetilde{\rho}} \frac{\partial \widetilde{\rho}}{\partial x_k} \]
\[
\frac{\partial^2 W}{\partial x_k \partial x_l} = -2 \left( -\frac{1}{\widetilde{\rho}^2} \frac{\partial \widetilde{\rho}}{\partial x_l} \frac{\partial \widetilde{\rho}}{\partial x_k} + \frac{1}{\widetilde{\rho}} \frac{\partial^2 \widetilde{\rho}}{\partial x_k \partial x_l} \right) 
\]
on $\Omega$.
As $\widetilde{\rho}^2 = \rho$ and $\widetilde{\rho} \in C^2(\overline{\Omega})$, we have that each $\frac{\partial^2 W}{\partial x_k \partial x_l}$ is integrable with respect to $\mu$ and
\begin{equation}
\label{eq:psd}
\int_\Omega \frac{\partial^2 W}{\partial x_k \partial x_l} \rho(x) dx = 2\int_\Omega \frac{\partial \widetilde{\rho}}{\partial x_l} \frac{\partial \widetilde{\rho}}{\partial x_k} - \widetilde{\rho} \frac{\partial^2 \widetilde{\rho}}{\partial x_k \partial x_l} dx = 4 \int_\Omega \frac{\partial \widetilde{\rho}}{\partial x_l} \frac{\partial \widetilde{\rho}}{\partial x_k} dx,
\end{equation}
where the last equality holds by Green's identity with the boundary condition $\widetilde{\rho}(x) = 0$ for $x \in \partial\Omega$.
Modifying the proof from \cite{chewi2023entropic},
let us fix any non-zero vector $w \in \R^d$, and take $h \vc= \langle w, \cdot\rangle$.
Then
\[ w = \E_\mu[\nabla h] = \int_\Omega \rho(x) \nabla h(x) dx = - \int_\Omega h(x) \nabla \rho(x) dx \]
\[ \int_\Omega \nabla \rho(x) dx = 0 \]
again by Green's identity and the boundary condition $\rho(x) = 0$ for $x \in \partial \Omega$. Therefore
\[ w = - \int_\Omega (h(x) - \E_\mu[h]) \nabla \rho(x) dx = - 2\int_\Omega (h(x) - \E_\mu[h]) \widetilde{\rho}(x) \nabla \widetilde{\rho}(x) dx, \]
so that
\[ \|w\|^2 = - 2\int_\Omega (h(x) - \E_\mu[h]) \widetilde{\rho}(x) w^\top \nabla \widetilde{\rho}(x) dx.\]
Applying Cauchy-Schwarz to the right-hand side gives us
\begin{align*}
\|w\|^2 &\leq \left( \int_\Omega (h(x) - \E_\mu[h])^2 \widetilde{\rho}^2(x) dx \right)^{1/2} \left( 4 \int_\Omega (w^\top \nabla \widetilde{\rho}(x))^2 dx \right)^{1/2} \\
&= \left( \mathrm{Var}_\mu(h) \right)^{1/2} \left( w^\top \E_\mu[\nabla^2 W] w \right)^{1/2}
\end{align*}
since $\E_\mu[\nabla^2 W]$ is given by (\ref{eq:psd}).
As $\mathrm{Var}_\mu(h) = \int_\Omega (h(x) - \E_\mu[h])^2 \rho(x) dx > 0$, we have that $\E_\mu[\nabla^2 W]$ is strictly positive definite, so that
\begin{align*}
    \left\langle w, \E_\mu[\nabla^2 W]^{-1}w \right\rangle &=  - 2\int_\Omega (h(x) - \E_\mu[h]) \widetilde{\rho}(x) \left(\E_\mu[\nabla^2 W]^{-1}w \right)^\top \nabla \widetilde{\rho}(x) dx \\
    &\leq \left( \mathrm{Var}_\mu(h) \right)^{1/2} \left( 4 \int_\Omega \left(\left(\E_\mu[\nabla^2 W]^{-1}w \right)^\top \nabla \widetilde{\rho}(x)\right)^2 dx \right)^{1/2} \\
    &= \left( \mathrm{Var}_\mu(h) \right)^{1/2} \left(  \left\langle w, \E_\mu[\nabla^2 W]^{-1}w \right \rangle \right)^{1/2}
\end{align*}
by another application of Cauchy-Schwarz. (\ref{eq:cramer-rao}) follows from rearranging.
\end{proof}

\end{document}